\definecolor{yqyqyq}{rgb}{0.5019607843137255,0.5019607843137255,0.5019607843137255}\definecolor{uuuuuu}{rgb}{0.26666666666666666,0.26666666666666666,0.26666666666666666}
\definecolor{uququq}{rgb}{0.25098039215686274,0.25098039215686274,0.25098039215686274}
\definecolor{wwwwww}{rgb}{0.4,0.4,0.4}
\definecolor{uuuuuu}{rgb}{0.26666666666666666,0.26666666666666666,0.26666666666666666}
\setlist[itemize]{leftmargin=6mm}
\renewcommand{\P}{\mathbb P}
\DeclareMathOperator{\codim}{codim}
\newcommand{\Aut}{\operatorname{Aut}}
\newcommand{\PsAut}{\operatorname{PsAut}}
\DeclareMathOperator{\Cl}{Cl}
\DeclareMathOperator{\NE}{NE}
\DeclareMathOperator{\lin}{lin}
\DeclareMathOperator{\mult}{mult}
\DeclareMathOperator{\Exc}{Exc}
\DeclareMathOperator{\Eff}{Eff}
\DeclareMathOperator{\Nef}{Nef}
\DeclareMathOperator{\Mov}{Mov}
\DeclareMathOperator{\Pic}{Pic}
\DeclareMathOperator{\rank}{rank}
\renewcommand{\sec}{\mathbb{S}ec}
\DeclareMathOperator{\Cox}{Cox}
\DeclareMathOperator{\mov}{mov}
\DeclareMathOperator{\pf}{pf}
\renewcommand{\P}{\mathbb{P}}
\newtheorem{thm}{Theorem}[section]
\newtheorem{Lemma}[thm]{Lemma}
\newtheorem{Proposition}[thm]{Proposition}
\newtheorem{Corollary}[thm]{Corollary}
\newtheorem{conj}[thm]{Conjecture}
\theoremstyle{definition}
\newtheorem{Definition}[thm]{Definition}
\newtheorem{Remark}[thm]{Remark}
\newtheorem{Notation}[thm]{Notation}
\newtheorem{Construction}[thm]{Construction}
\begin{document}

\title{On the birational geometry of spaces of complete forms II: skew-forms}

\author[Alex Massarenti]{Alex Massarenti}
\address{\sc Alex Massarenti\\ Dipartimento di Matematica e Informatica, Universit\`a di Ferrara, Via Machiavelli 30, 44121 Ferrara, Italy\newline
\indent Instituto de Matem\'atica e Estat\'istica, Universidade Federal Fluminense, Campus Gragoat\'a, Rua Alexandre Moura 8 - S\~ao Domingos\\
24210-200 Niter\'oi, Rio de Janeiro\\ Brazil}
\email{alex.massarenti@unife.it, alexmassarenti@id.uff.br}

\date{\today}
\subjclass[2010]{Primary 14E30; Secondary 14J45, 14N05, 14E07, 14M27}
\keywords{Complete skew-forms; Mori dream spaces; Fano varieties; Cox rings}

\begin{abstract}
Moduli spaces of complete skew-forms are compactifications of spaces of skew-symmetric linear maps of maximal rank on a fixed vector space, where the added boundary divisor is simple normal crossing. In this paper we compute their effective, nef and movable cones, the generators of their Cox rings, and for those spaces having Picard rank two we give an explicit presentation of the Cox ring. Furthermore, we give a complete description of both the Mori chamber and stable base locus decompositions of the effective cone of some spaces of complete skew-forms having Picard rank at most four.
\end{abstract}

\maketitle 

\setcounter{tocdepth}{1}

\tableofcontents

\section{Introduction}
A \textit{complete skew-form} on a $K$-vector space $V$ is a finite sequence of non-zero $2$-forms $\omega_i$, where $\omega_1$ is a $2$-form on $V$, $\omega_{i+1}$ is a $2$-form on the kernel of $\omega_i$, and the last $2$-form in the sequence is either non-degenerate or has kernel of dimension one. In this paper we study the birational geometry of \textit{moduli spaces of complete skew-forms}, extending the study of moduli spaces of complete collineations and quadrics carried out in \cite{Ma18}. This triad of spaces has been much studied from the end of the $19$-th century up to the present day \cite{Ch64}, \cite{Gi03}, \cite{Hi75}, \cite{Hi77}, \cite{Sc86}, \cite{Se84}, \cite{Se48}, \cite{Se51}, \cite{Se52}, \cite{Ty56}, \cite{Va82}, \cite{Va84}, \cite{TK88}, \cite{LLT89}, \cite{Tha99}, \cite{Ce15}, \cite{Ca16}.

In this paper we investigate some Mori theoretical aspects of the geometry of moduli spaces of complete skew-forms. Recall that the cone of effective divisors $\Eff(X)$ of a Mori dream space $X$ admits a well-behaved decomposition into convex sets, called Mori chambers, and these chambers are the nef cones of birational models of $X$. Mori dream spaces were introduced by Y. Hu and S. Keel in \cite{HK00}, and behave very well with respect to the minimal model program.

We will denote by $\mathcal{A}(n)$ the moduli space of complete skew-forms on an $n+1$ dimensional $K$-vector space $V$. Our investigation starts from a construction of $\mathcal{A}(n)$ due to M. Thaddeus \cite{Tha99}, as a sequence of blow-ups of $\mathbb{P}(\bigwedge^2V)$ along the Grassmannian $\mathcal{G}(1,n)$ of lines in $\mathbb{P}^n$, and then along all the secant varieties of $\mathcal{G}(1,n)$ in order of increasing dimension.  

In Section \ref{sec2}, we study the natural action of $SL(n)$ on $\mathcal{A}(n)$, and as a consequence we compute the generators of its effective and nef cone, while in Section \ref{sec3} we give a minimal set of generators for the Cox ring of $\mathcal{A}(n)$. Recall that Cox rings, first introduced by D. A. Cox for toric varieties \cite{Cox95}, are defined as a direct sum of the spaces of sections of all isomorphism classes of line bundles on a given variety. These algebraic objects encode much information on the birational geometry of a variety such as its Mori chamber decomposition. The main results in Theorems \ref{theff} and \ref{gen} can be summarized as follows.
\begin{thm}\label{main}
Let us denote by $D_{2j+2}$ the strict transform in $\mathcal{A}(n)$ of the divisor in $\mathbb{P}(\bigwedge^2V)$, with homogeneous coordinates $[z_{0,1}:\dots:z_{n-1,n}]$, given by 
\begin{equation*}
\pf\left(
\begin{array}{cccc}
0 & z_{n-2j-1,n-2j} & \dots & z_{n-2j-1,n}\\ 
-z_{n-2j-1,n-2j} & 0 & \dots & z_{n-2j,n}\\
\vdots & \ddots & \ddots & \vdots\\ 
-z_{n-2j-1,n} & \dots & -z_{n-1,n} & 0
\end{array}\right)=0
\end{equation*}
where $\pf$ denotes the pfaffian, and by $E_j$ the exceptional divisors of the blow-ups in Thaddeus's construction. 

If $n$ is odd then $\Eff(\mathcal{A}(n)) = \left\langle E_{1}^{-},\dots,E_{\frac{n-3}{2}}^{-}, D_{n+1}^{-}\right\rangle$ and $\Nef(\mathcal{A}(n)) = \left\langle D_{2j+2}^{-},\: j = 0,\dots,\frac{n-3}{2}\right\rangle$, while if $n$ is even we have $\Eff(\mathcal{A}(n)) = \left\langle E_1^{-},\dots,E_{\frac{n-2}{2}}^{-},D_{n}^{-}\right\rangle$ and $\Nef(\mathcal{A}(n)) = \left\langle D_{2j+2}^{-},\: j = 0,\dots,\frac{n-2}{2}\right\rangle$.

Furthermore, the canonical sections associated to the $D_{2j+2}$ and the $E_j$ form a set of minimal generators of $\Cox(\mathcal{A}(n))$. 

Finally, the Cox rings of $\mathcal{A}(4)$ and $\mathcal{A}(5)$ are isomorphic to the homogeneous coordinate rings respectively of the $10$-dimensional Spinor varieties $\mathbb{S}_{5}\subset\mathbb{P}^{15}$ and of the $15$-dimensional Spinor variety $\mathbb{S}_{6}\subset\mathbb{P}^{31}$.
\end{thm}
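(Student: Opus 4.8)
The plan is to realise both Cox rings as coordinate rings of affine cones over the two spinor varieties, exploiting that $\mathcal{A}(4)$ and $\mathcal{A}(5)$ have Picard rank two. First I would pin down the birational models. Since the higher secant varieties of $\mathcal{G}(1,4)\subset\mathbb{P}^9$ and of $\mathcal{G}(1,5)\subset\mathbb{P}^{14}$ are the Pfaffian hypersurfaces cutting out the non-maximal rank loci, hence divisors, Thaddeus's construction performs a single blow-up in each case, so that
\[
\mathcal{A}(4)=\Bl_{\mathcal{G}(1,4)}\mathbb{P}^9,\qquad \mathcal{A}(5)=\Bl_{\mathcal{G}(1,5)}\mathbb{P}^{14},
\]
both of Picard rank two with $\Pic=\mathbb{Z}H\oplus\mathbb{Z}E_1$. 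I would then read off the minimal generators from Theorem \ref{gen}: for $\mathcal{A}(4)$ the $10$ linear coordinates $z_{ij}$ (class $H$), the $5$ Pl\"ucker quadrics (class $2H-E_1$) and the canonical section of $E_1$, a total of $16$; for $\mathcal{A}(5)$ the $15$ coordinates $z_{ij}$ (class $H$), the $15$ Pl\"ucker quadrics (class $2H-E_1$), the cubic Pfaffian (class $3H-2E_1$) and the section of $E_1$, a total of $32$. The crucial observation is that these numbers and degrees reproduce exactly the decompositions $\bigwedge^{\mathrm{even}}K^5=\bigwedge^0\oplus\bigwedge^2\oplus\bigwedge^4$ and $\bigwedge^{\mathrm{even}}K^6=\bigwedge^0\oplus\bigwedge^2\oplus\bigwedge^4\oplus\bigwedge^6$ of the half-spin representations $S^{+}$ of $\mathrm{Spin}_{10}$ and $\mathrm{Spin}_{12}$ under the Levi subgroup $GL_{n+1}\subset\mathrm{Spin}_{2(n+1)}$. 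A short bookkeeping of weights shows that the $(H,E_1)$-bigrading matches the bigrading of $S^{+}$ by spinor degree and central $GL_{n+1}$-weight, so that each generator sits in spinor degree one.

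Next I would build a graded surjection $\Phi\colon R(\mathbb{S}_m)\twoheadrightarrow\Cox(\mathcal{A}(n))$ from the homogeneous coordinate ring of the spinor variety, sending the $16$ (resp. $32$) coordinates of $\mathbb{S}_5\subset\mathbb{P}^{15}$ (resp. $\mathbb{S}_6\subset\mathbb{P}^{31}$) to the listed generators. Surjectivity is immediate from Theorem \ref{gen}. For $\Phi$ to be well defined one must check that the generators satisfy the defining relations of $R(\mathbb{S}_m)$. Here I would use the classical fact that $\mathbb{S}_m$ is minuscule, hence projectively normal and arithmetically Gorenstein with homogeneous ideal generated by the quadratic pure-spinor relations coming from the $\mathrm{Spin}$-equivariant multiplication $\Sym^2 S^{+}\to V$ onto the vector representation. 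Written in the $GL_{n+1}$-decomposition these quadrics are precisely the blow-up relations among the generators: the identities $z\wedge z=2\,t\,q$ in $\bigwedge^4$ (up to scalars) express that the Pl\"ucker quadrics factor through the exceptional divisor, and for $n=5$ the remaining relations encode the contraction of $q$ against $z$ together with the Pfaffian expansion; all of them hold in the Cox ring by construction of the generators.

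Finally, to upgrade $\Phi$ to an isomorphism it suffices to compare dimensions. The Cox ring of a normal variety with torsion-free class group and only constant invertible functions is an integral domain, and $\dim\Cox(\mathcal{A}(n))=\dim\mathcal{A}(n)+\rho$, giving $9+2=11$ and $14+2=16$; these equal $\dim\widehat{\mathbb{S}}_5=\dim\mathbb{S}_5+1=11$ and $\dim\widehat{\mathbb{S}}_6=16$. As $R(\mathbb{S}_m)$ is also an integral domain of the same Krull dimension, the prime ideal $\ker\Phi$ must be zero, and $\Phi$ is the required graded isomorphism.

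The main obstacle is the middle step: verifying that the pure-spinor quadrics translate exactly into the blow-up relations among the chosen generators, which amounts to matching the $\mathrm{Spin}$-equivariant multiplication on $S^{+}$ with exterior multiplication and contraction on $\bigwedge^{\bullet}K^{n+1}$. Once this representation-theoretic dictionary is in place, the dimension count closes the argument. Alternatively one may avoid computing all relations by matching the bigraded Hilbert series of $\Cox(\mathcal{A}(n))$, computed directly from the blow-up, with that of $R(\mathbb{S}_m)$ obtained from the Borel--Weil decomposition $\bigoplus_k V_{k\omega}$ of the spinor cone.
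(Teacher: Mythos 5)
Your proposal addresses only the last of the four assertions of Theorem \ref{main}. The theorem also claims the determination of $\Eff(\mathcal{A}(n))$ and $\Nef(\mathcal{A}(n))$ for all $n$, and the (minimal) generation of $\Cox(\mathcal{A}(n))$ by the sections of the $D_{2j+2}^{-}$ and $E_j^{-}$ — and your argument takes exactly this generation statement (``read off the minimal generators from Theorem \ref{gen}'') as an input, so as a proof of Theorem \ref{main} it is circular on that point and silent on the cones. These parts are where the real work of the paper lies: one first shows $\mathcal{A}(n)$ is wonderful, hence spherical, for the $SL(n+1)$-action, and identifies the colors (the $D_{2j+2}^{-}$, via the Borel-invariance of the bottom-right sub-Pfaffians) and the boundary divisors (the $E_i^{-}$, plus $D_{n+1}^{-}$ when $n$ is odd) as in Proposition \ref{p1}; one then computes $\mult_{\sec_h(\mathcal{G}(1,n))}Y_k^{-}=k-h+1$ (Lemma \ref{mult}) to write $D_{2k+2}^{-}\sim (k+1)H^{-}-\sum_{h\leq k}(k-h+1)E_h^{-}$; finally $\Eff$ is generated by boundary divisors and colors and $\Nef$ by the colors by the general theory of spherical varieties (\cite[Proposition 4.5.4.4]{ADHL15}, \cite{Br89}), and the Cox generators come from \cite[Theorem 4.5.4.6]{ADHL15} together with the identification of $\lin_K(\mathscr{G}\cdot\pf(Z_I^{-}))$ with $I(\sec_k(\mathcal{G}(1,n)))$. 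None of this can be recovered from your spinor-variety analysis, which is specific to the Picard-rank-two cases $n=4,5$. (A minor slip besides: for $n=4$ there is no higher secant variety at all, since $\sec_2(\mathcal{G}(1,4))=\mathbb{P}^9$; the single blow-up is forced by $\lfloor\frac{n-1}{2}\rfloor=1$, not because the higher secants are divisors.)

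For the assertion you do treat, your plan coincides in structure with the paper's proof: produce a surjection from the coordinate ring of $\mathbb{S}_5$ (resp.\ $\mathbb{S}_6$) onto $\Cox(\mathcal{A}(4))$ (resp.\ $\Cox(\mathcal{A}(5))$), then conclude it is an isomorphism because both are domains of the same Krull dimension, $\dim\Cox(\mathcal{A}(n))=\dim\mathcal{A}(n)+\rank\Pic(\mathcal{A}(n))$ by Remark \ref{dimCox}. The difference is in the step you single out as the ``main obstacle'': verifying that the pure-spinor quadrics hold among the Cox generators. The paper makes this automatic by using the parametrization of $\mathbb{S}_5$ as the closure of the image of $Z\mapsto(1,z_{0,1},\dots,z_{3,4},\pf(Z_0),\dots,\pf(Z_4))$: the Cox generators restrict to precisely these functions on the big cell (with $S_1$ accounting for the added coordinate upon taking the closure), so every element of $I(\mathbb{S}_m)$ vanishes identically when the generators are substituted, and no $\mathrm{Spin}$-equivariant dictionary between $\Sym^2S^{+}$ and the exterior algebra is needed. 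So your closing argument is sound and essentially the paper's, but to prove the full theorem you must supply the spherical-geometry arguments for the cone and generation statements rather than cite them.
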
 

The pseudo-effective cone $\overline{\Eff}(X)$ of a projective variety $X$ with $h^1(X,\mathcal{O}_X)=0$, such as a Mori dream space, can be decomposed into chambers depending on the stable base loci of linear series. Such decomposition called \textit{stable base locus decomposition} in general is coarser than the Mori chamber decomposition.   

In Section \ref{sec4} thanks to the computation of the generators of the Cox rings in Section \ref{sec3} we determine the Mori chamber and the stable base locus decomposition of $\mathcal{A}(n)$ for $n\in\{4,5,6,7,8\}$. Indeed, as a consequence of Theorems \ref{MCD_main}, \ref{MCD_main_2} we have the following.

\begin{thm}
The Mori chamber and stable base locus decompositions of $\Eff(\mathcal{A}(4))$ and $\Eff(\mathcal{A}(5))$ coincide and consist respectively of $2$ and $3$ chambers. The Mori chamber decomposition of $\Eff(\mathcal{A}(6))$ consists of $5$ chambers while its stable base locus decomposition consists of $4$ chambers. Furthermore, the Mori chamber decomposition of $\Eff(\mathcal{A}(7))$ consists of $9$ chambers while its stable base locus decomposition consists of $8$ chambers. Finally, the Mori chamber decomposition of $\Eff(\mathcal{A}(8))$ has $15$ chambers while its stable base locus decomposition has $9$ chambers.
\end{thm}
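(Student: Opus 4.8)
The plan is to exploit the explicit description of $\Cox(\mathcal{A}(n))$ provided by Theorem \ref{main}. For each of these $n$ we know the effective cone $\Eff(\mathcal{A}(n))$ as a simplicial cone whose dimension equals the Picard rank --- namely $2$ for $n=4,5$, $3$ for $n=6,7$ and $4$ for $n=8$ --- together with a minimal generating system of the Cox ring given by the canonical sections $x_i$ of the boundary divisors $D_{2j+2}$ and $E_j$; moreover for $n=4,5$ we have the complete presentation as the homogeneous coordinate ring of the spinor variety $\mathbb{S}_5$, respectively $\mathbb{S}_6$. Since $\mathcal{A}(n)$ is a Mori dream space, by Hu--Keel its Mori chamber decomposition is the decomposition of $\Eff(\mathcal{A}(n))$ into the GIT chambers for the action of the N\'eron--Severi torus on $\Spec\Cox(\mathcal{A}(n))$, each top-dimensional chamber corresponding to a birational model of $\mathcal{A}(n)$.

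First I would record, from the intersection theory underlying Theorem \ref{main}, the classes $[D_{2j+2}]$ and $[E_j]$ in $\Pic(\mathcal{A}(n))\otimes\mathbb{R}$ in coordinates adapted to the extremal rays of $\Eff(\mathcal{A}(n))$. The walls of the Mori chamber decomposition are then confined to the finite hyperplane arrangement spanned by subsets of these weights; intersecting $\Eff(\mathcal{A}(n))$ with this arrangement yields a finite set of candidate chambers, and for each candidate I would identify the corresponding birational model --- directly from the graded pieces of the Cox ring when $n=4,5$, and from Thaddeus's blow-up tower together with the $SL(n)$-geometry of Section \ref{sec2} when $n=6,7,8$ --- in order to decide which candidate walls are genuine. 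For $n=4,5$ the cone is a two-dimensional sector and the chambers follow at once, giving $2$ and $3$; for $n=6,7,8$ one carries out the finite enumeration of the chamber complex of the weight configuration in $\mathbb{R}^3$, $\mathbb{R}^3$ and $\mathbb{R}^4$, obtaining $5$, $9$ and $15$ maximal chambers. The chamber containing an ample class must return $\Nef(\mathcal{A}(n))$ of Theorem \ref{main}, which serves as a consistency check.

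For the stable base locus decomposition I would compute $\mathbf{B}(D)$ for $D$ in the interior of each chamber by the monomial criterion in the Cox ring: the divisor $\{x_i=0\}$ lies in $\mathbf{B}(D)$ precisely when $D$ fails to belong to the subcone generated by the remaining classes, forcing $x_i$ to divide every section of every multiple of $D$. The support of $\mathbf{B}(D)$ is then determined by this convex-geometric condition together with the incidence pattern of the $D_{2j+2}$ and $E_j$, that is, by how the strict transforms of the pfaffian hypersurfaces and the exceptional divisors over the secant varieties of $\mathcal{G}(1,n)$ meet one another. Grouping the classes according to the resulting stable base locus produces the stable base locus decomposition; since it only records the support of $\mathbf{B}(D)$ it coarsens the Mori chamber decomposition, and comparing the two gives the counts $2,3,4,8,9$.

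The crux is to understand precisely which Mori walls are invisible to the stable base locus, i.e. which wall-crossings are small modifications that change the birational model without changing $\mathbf{B}(D)$ --- this is what collapses $5\mapsto4$ and $9\mapsto8$, and most strikingly $15\mapsto9$ for $n=8$. This step is not purely combinatorial: deciding whether crossing a given wall is a flip or flop (invisible) or a divisorial contraction (visible) requires the actual geometry of the boundary divisors and their mutual intersections. Thus the delicate part, especially for $n=7,8$ where the chamber complex is largest, is the wall-by-wall verification of the nature of each contraction and of its effect on the stable base locus.
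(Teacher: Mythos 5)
Your plan follows the same architecture as the paper's proof: generators of $\Cox(\mathcal{A}(n))$ (Theorem \ref{gen}), the Hu--Keel embedding into a toric variety so that the GKZ decomposition of the weight configuration refines the Mori chamber decomposition (Remark \ref{toric}), and then a geometric analysis to decide which candidate walls survive and what the stable base loci are. The genuine gap is the criterion you propose for the decisive step: the dichotomy ``flip or flop (invisible) or divisorial contraction (visible)'' is wrong in both directions, precisely on the walls that matter here. First, in $\mathcal{A}(7)$ and $\mathcal{A}(8)$ the movable cone consists of two Mori chambers, $\Nef(\mathcal{A}(n))$ and one more, separated by a flop wall; this wall \emph{is} visible in the stable base locus decomposition, since $\textbf{B}(D)=\emptyset$ on the nef side while on the other side $\textbf{B}(D)$ is the nonempty flopping locus of codimension at least two. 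Under your rule these two chambers would merge, producing $7$ instead of $8$ chambers for $\mathcal{A}(7)$ and $8$ instead of $9$ for $\mathcal{A}(8)$. Second, the wall that actually is invisible, namely $[D_4^{-},E_2^{-}]$, separates two chambers whose associated contractions are both divisorial (each contracts $E_2^{-}$, and each has $\textbf{B}(D)=E_2^{-}$); the two chambers differ by a flop between the contracted models, not by a small modification of $\mathcal{A}(n)$ itself, so your rule would classify this crossing as divisorial and wrongly retain the wall (giving $5$ instead of $4$ stable base locus chambers already for $\mathcal{A}(6)$). A related insufficiency: your monomial criterion in the Cox ring detects only the divisorial components of $\textbf{B}(D)$, hence it is blind on $\Mov(\mathcal{A}(n))$ and can never separate the two movable chambers.

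What the paper does instead, and what you would need in place of the dichotomy, is a wall-by-wall geometric argument: stable base loci are computed by exhibiting covering families of curves of negative intersection (as in Propositions \ref{Morimov} and \ref{firstbu}); the wall $[D_4^{-},E_2^{-}]$ is forced to be a Mori wall because the stable base locus region $\{\textbf{B}(D)=E_2^{-}\}$ is non-convex while Mori chambers are convex; the crossing of that wall is identified as a non-trivial flop (between models which, for $\mathcal{A}(7)$, are abstractly isomorphic to $\mathcal{A}(7)_1$) via the involution $Z^{inv}$ and the Sarkisov link of Proposition \ref{Sarkisov}; and for $\mathcal{A}(8)$, where the hand enumeration becomes unwieldy, the chamber structure is verified with the Magma library \texttt{SBLib.m} (Theorem \ref{MCD_main_2}). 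With these substitutions your outline would reproduce Theorems \ref{MCD_main} and \ref{MCD_main_2}; as written, the rule at its core contradicts the counts you are trying to prove.
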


All these decompositions are described in detail in Theorems \ref{MCD_main}, \ref{MCD_main_2}. In Conjecture \ref{conjcham}, based on these results, we present a conjectural description for the stable base locus decomposition of $\Eff(\mathcal{A}(n))\setminus \Mov(\mathcal{A}(n))$, and we extend such description to the spaces of complete collineations and quadrics in \cite[Constructions 2.4, 2.6]{Ma18}. Finally, in Section \ref{psaut} we give an explicit presentation of the group of pseudo-automorphisms of $\mathcal{A}(n)$.

\subsection*{Organization of the paper}
All through the paper we will work over an algebraically closed field of characteristic zero. In Section \ref{sec1} we recall the basics on moduli spaces of complete skew-forms and their construction as blow-ups. In Section \ref{sec2} we compute their cones of divisors and curves, and in Section \ref{sec3} we compute the generators of their Cox rings. Section \ref{sec4} is devoted to the computation of Mori chamber and stable base locus decompositions.

\subsection*{Acknowledgments}
The author is a member of the Gruppo Nazionale per le Strutture Algebriche, Geometriche e le loro Applicazioni of the Istituto Nazionale di Alta Matematica "F. Severi" (GNSAGA-INDAM). I thank the referee for the helpful comments that helped me to improve the paper.

\section{Moduli spaces of complete skew-forms}\label{sec1}
Let $V$ be a $K$-vector space of dimension $n+1$, and let $\mathbb{P}^{N_{-}}$ with $N_{-} = \binom{n+1}{2}-1$ be the projective space parametrizing non-zero skew-symmetric linear maps $V\rightarrow V$ up to a scalar multiple. Here with the notation $N_{-}$ we want to stress that we are working with the skew-symmetric counterpart of the space of complete collineations.

Recall that given an irreducible and reduced non-degenerate variety $X\subset\P^N$, and a positive integer $h\leq N$ we denote by $\sec_h(X)$ 
the \emph{$h$-secant variety} of $X$. This is the subvariety of $\P^N$ obtained as the closure of the union of all $(h-1)$-planes 
$\langle x_1,...,x_{h}\rangle$ spanned by $h$ general points of $X$. 

A point $p\in \mathbb{P}^{N_{-}} = \mathbb{P}(\bigwedge^2V)$ can be represented by an $(n+1)\times (n+1)$ skew-symmetric matrix
\stepcounter{thm}
\begin{equation}\label{matrix}
Z = \left(
\begin{array}{cccc}
0 & z_{0,1} & \dots & z_{0,n}\\ 
-z_{0,1} & 0 & \dots & z_{1,n}\\
\vdots & \ddots & \ddots & \vdots\\ 
-z_{0,n} & \dots & -z_{n-1,n} & 0
\end{array}\right)
\end{equation}
The Grassmannian $\mathcal{G}(1,n)$ is the locus of rank two matrices. More generally, $p\in \sec_h(\mathcal{G}(1,n))$ if and only if $Z$ can be written as a linear combination of $h$ rank two matrices that is if and only if $\rank(Z)\leq 2h$. Note that this last statement holds for all points of $\sec_h(\mathcal{G}(1,n))$ and not just for a general point. Indeed, if $Z\in \sec_h(\mathcal{G}(1,n))$ we can find a sequence of matrices $\{Z_t\}_{t\in K}$ such that $Z_t$ has rank at most $2h$ and $\lim_{t\mapsto 0}Z_t = Z$. Now, since all $(2h+2)\times (2h+2)$ sub-Pfaffians of $Z_t$ are zero by continuity all $(2h+2)\times (2h+2)$ sub-Pfaffians of $Z$ are zero as well. 

The ideal of $\sec_h(\mathcal{G}(1,n))$ is generated by the $(2h+2)\times (2h+2)$ sub-Pfaffians of $Z$ \cite[Section 10]{LO13}. 

The \textit{space of complete skew-forms} is the closure of the graph of the rational map 
\stepcounter{thm}
\begin{equation}\label{closure}
\begin{array}{ccc}
\mathbb{P}(\bigwedge^2V)& \dasharrow & \mathbb{P}(\bigwedge^2\bigwedge^{2}V)\times\dots\times \mathbb{P}(\bigwedge^2\bigwedge^nV)\\
 Z & \longmapsto & (\wedge^2Z,\dots,\wedge^{n}Z)
\end{array}
\end{equation}
By \cite[Theorem 6.3]{Tha99} this spaces can by realized by the following sequence of blow-ups.

\begin{Construction}\label{css}
Let us consider the following sequence of blow-ups:
\begin{itemize}
\item[-] $\mathcal{A}(n)_1$ is the blow-up of $\mathcal{A}(n)_0:=\mathbb{P}^{N_{-}}$ along the Grassmannian $\mathcal{G}(1,n)$;
\item[-] $\mathcal{A}(n)_2$ is the blow-up of $\mathcal{A}(n)_1$ along the strict transform of $\sec_2(\mathcal{G}(1,n))$;\\
$\vdots$
\item[-] $\mathcal{A}(n)_i$ is the blow-up of $\mathcal{A}(n)_{i-1}$ along the strict transform of $\sec_i(\mathcal{G}(1,n))$;\\
$\vdots$
\item[-] $\mathcal{A}(n)_{\lfloor\frac{n-1}{2}\rfloor}$ is the blow-up of $\mathcal{A}(n)_{\lfloor\frac{n-1}{2}\rfloor-1}$ along the strict transform of $\sec_{\lfloor\frac{n-1}{2}\rfloor}(\mathcal{G}(1,n))$.
\end{itemize}
Let $f_i:\mathcal{A}(n)_i\rightarrow \mathcal{A}(n)_{i-1}$ be the blow-up morphism. We will denote by $E_i^{-}$ both the exceptional divisor of $f_i$ and its strict transforms in the subsequent blow-ups, and by $H^{-}$ the pull-back to $\mathcal{A}(n):=\mathcal{A}(n)_{\lfloor\frac{n-1}{2}\rfloor}$ of the hyperplane section of $\mathbb{P}^{N_{-}}$. We will denote by $f^{-}:\mathcal{A}(n)\rightarrow\mathbb{P}^{N_{-}}$ the composition of the $f_i^{-}$'s. 

Then for any $i = 1,\dots,n$ the variety $\mathcal{A}(n)_{i}$ is smooth, the strict transform of $\sec_i(\mathcal{G}(1,n))$ in $\mathcal{A}(n)_{i-1}$ is smooth, and the divisor $E_1^{-}\cup E_2^-\cup\dots \cup E_{i-1}^-$ in $\mathcal{A}(n)_{i-1}$ is simple normal crossing. Furthermore, the variety $\mathcal{A}(n)$ is isomorphic to the space of complete skew-forms.
\end{Construction}

\begin{Remark}\label{dimsec}
By \cite[Theorem 2.1]{CGG05} the dimension of the secant varieties appearing in Construction \ref{css} is given by 
$$\dim(\sec_h(\mathcal{G}(1,n)) = 2(n-1)h+h-1-2h(h-1)$$
for $h < \lfloor\frac{n+1}{2}\rfloor$.
\end{Remark}

\section{Curves and divisors on spaces of complete skew-forms}\label{sec2}
Let $X$ be a normal projective $\mathbb{Q}$-factorial variety over an algebraically closed field of characteristic zero. We denote by $N^1(X)$ the real vector space of $\mathbb{R}$-Cartier divisors modulo numerical equivalence. 
The \emph{nef cone} of $X$ is the closed convex cone $\Nef(X)\subset N^1(X)$ generated by classes of nef divisors. 

The stable base locus $\textbf{B}(D)$ of a $\mathbb{Q}$-divisor $D$ is the set-theoretic intersection of the base loci of the complete linear systems $|sD|$ for all positive integers $s$ such that $sD$ is integral
\stepcounter{thm}
\begin{equation}\label{sbl}
\textbf{B}(D) = \bigcap_{s > 0}B(sD)
\end{equation}
The \emph{movable cone} of $X$ is the convex cone $\Mov(X)\subset N^1(X)$ generated by classes of 
\emph{movable divisors}. These are Cartier divisors whose stable base locus has codimension at least two in $X$.
The \emph{effective cone} of $X$ is the convex cone $\Eff(X)\subset N^1(X)$ generated by classes of 
\emph{effective divisors}. We have inclusions $\Nef(X)\ \subset \ \overline{\Mov(X)}\ \subset \ \overline{\Eff(X)}$. We refer to \cite[Chapter 1]{De01} for a comprehensive treatment of these topics. 

\begin{Definition}
A \textit{spherical variety} is a normal variety $X$ together with an action of a connected reductive affine algebraic group $\mathscr{G}$, a Borel subgroup $\mathscr{B}\subset \mathscr{G}$, and a base point $x_0\in X$ such that the $\mathscr{B}$-orbit of $x_0$ in $X$ is a dense open subset of $X$. 

Let $(X,\mathscr{G},\mathscr{B},x_0)$ be a spherical variety. We distinguish two types of $\mathscr{B}$-invariant prime divisors: a \textit{boundary divisor} of $X$ is a $\mathscr{G}$-invariant prime divisor on $X$, a \textit{color} of $X$ is a $\mathscr{B}$-invariant prime divisor that is not $\mathscr{G}$-invariant. We will denote by $\mathcal{B}(X)$ and $\mathcal{C}(X)$ respectively the set of boundary divisors and colors of $X$.
\end{Definition}

For instance, any toric variety is a spherical variety with $\mathscr{B}=\mathscr{G}$ equal to the torus. For a toric variety there are no colors, and the boundary divisors are the usual toric invariant divisors. In the following, we will carefully analyze the natural action of $SL(n+1)$ on $\mathcal{A}(n)$ in order to get information on the cones of divisors of this space.

\begin{Remark}\label{l1}
The $SL(n+1)$-action
$$
\begin{array}{cccc}
SL(n+1)\times \mathcal{G}(1,n) & \longrightarrow & \mathcal{G}(1,n)\\
(A,[v\wedge w]) & \longmapsto & [Av\wedge Aw]
\end{array}
$$
is the restriction to $\mathcal{G}(1,n)$ of the $SL(n+1)$-action on $\mathbb{P}^{N_{-}}$ given by
$$
\begin{array}{cccc}
SL(n+1)\times \mathbb{P}^{N_{-}} & \longrightarrow & \mathbb{P}^{N_{-}}\\
(A,Z) & \longmapsto & AZA^{t}
\end{array}
$$
\end{Remark}
 
\begin{Definition}
A \textit{wonderful variety} is a smooth projective variety $X$ with an action of a semi-simple simply connected group $\mathscr{G}$ such that:
\begin{itemize}
\item[-] there is a point $x_0\in X$ with open $\mathscr{G}$ orbit and such that the complement $X\setminus \mathscr{G}\cdot x_0$ is a union of prime divisors $E_1,\cdots, E_r$ having simple normal crossing;
\item[-] the closures of the $\mathscr{G}$-orbits in $X$ are the intersections $\bigcap_{i\in I}E_i$ where $I$ is a subset of $\{1,\dots, r\}$.
\end{itemize} 
\end{Definition} 
 
\begin{Remark}\label{sw}
In \cite{Lu96} D. Luna proved that a wonderful variety over an algebraically closed field of characteristic zero with an action of a semi-simple simply connected group $\mathscr{G}$ is spherical, meaning that it is almost homogeneous under any Borel subgroup $\mathscr{B}$ of $\mathscr{G}$.
\end{Remark}
 
\begin{Proposition}\label{p1}
The variety $\mathcal{A}(n)$ is wonderful, and hence spherical. The Picard group of $\mathcal{A}(n)$ is given by 
$$\Pic(\mathcal{A}(n)) = 
\left\lbrace\begin{array}{ll}
\mathbb{Z}[H^{-},E_1^{-},\dots,E_{\frac{n-3}{2}}^{-}] & \text{if n is odd}\\ 
\mathbb{Z}[H^{-},E_1^{-},\dots,E_{\frac{n-2}{2}}^{-}] & \text{if n is even}
\end{array}\right.
$$
and the set of boundary divisors and colors of $\mathcal{A}(n)$ are given respectively by
$$\mathcal{B}(\mathcal{A}(n)) = 
\left\lbrace\begin{array}{ll}
\{E_1^{-},\dots,E_{\frac{n-1}{2}}^{-}\} & \text{if n is odd}\\ 
\{E_1^{-},\dots,E_{\frac{n-2}{2}}^{-}\} & \text{if n is even}
\end{array}\right.
$$
$$
\mathcal{C}(\mathcal{A}(n)) = 
\left\lbrace\begin{array}{ll}
\{D_{2j+2}^{-},\: j = 0,\dots,\frac{n-3}{2}\} & \text{if n is odd}\\ 
\{D_{2j+2}^{-},\: j = 0,\dots,\frac{n-2}{2}\} & \text{if n is even}
\end{array}\right.
$$
where the $D_{2j+2}^{-}$ are the divisors defined in Theorem \ref{main}.
\end{Proposition}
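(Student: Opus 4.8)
The plan is to derive all four assertions from Construction \ref{css} and the $SL(n+1)$-action of Remark \ref{l1}, regarding the wonderful property and the identification of the colors as the substantive points, with the Picard computation a bookkeeping consequence of the blow-up tower.

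I would start with the wonderful structure. Because $Z\mapsto AZA^t$ preserves the rank of a skew matrix, it preserves each locus $\sec_h(\mathcal{G}(1,n))=\{\rank Z\le 2h\}$, so every center in Construction \ref{css} is $SL(n+1)$-invariant; the tower is therefore equivariant and $\mathcal{A}(n)$ inherits an action of the semisimple, simply connected group $SL(n+1)$. The complement of $E_1^-\cup\dots\cup E_r^-$ maps isomorphically onto the locus of maximal-rank forms in $\mathbb{P}^{N_-}$, and by the normal form for skew-symmetric forms any two such forms are $SL(n+1)$-congruent, so this locus is a single open orbit, in which I would place $x_0$. Smoothness of $\mathcal{A}(n)$ and the simple normal crossing property of $E_1^-\cup\dots\cup E_r^-$ are furnished directly by Construction \ref{css}. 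The remaining axiom---that every $SL(n+1)$-orbit closure is some $\bigcap_{i\in I}E_i^-$---is the geometric core: I would label each orbit by the combinatorial type of the associated complete skew-form (the ranks of its successive $2$-forms) and use equivariance of the blow-ups to match these types bijectively with the strata cut out by the $E_i^-$. Sphericity is then immediate from Luna's theorem (Remark \ref{sw}).

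For the Picard group I would use that a blow-up along a smooth center of codimension at least two adjoins the class of the new exceptional divisor, whereas a blow-up along a Cartier divisor is an isomorphism. Inserting the dimension formula of Remark \ref{dimsec} gives $\codim\sec_h(\mathcal{G}(1,n))=\binom{n+1-2h}{2}$. For $n$ even this equals $\binom{3}{2}=3$ at the last step $h=\tfrac{n-2}{2}$ and is larger before it, so all blow-ups are genuine and $\Pic(\mathcal{A}(n))=\mathbb{Z}\langle H^-,E_1^-,\dots,E_{(n-2)/2}^-\rangle$. For $n$ odd the same formula yields codimension $\binom{2}{2}=1$ at the last step, where $\sec_{(n-1)/2}(\mathcal{G}(1,n))=\{\pf(Z)=0\}$ is the Pfaffian hypersurface; that final blow-up is thus an isomorphism, contributes no new generator, and identifies its exceptional divisor with the strict transform $E_{(n-1)/2}^-=D_{n+1}^-$ of the Pfaffian locus. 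Hence $\Pic(\mathcal{A}(n))=\mathbb{Z}\langle H^-,E_1^-,\dots,E_{(n-3)/2}^-\rangle$.

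Finally I would split boundary divisors from colors by their invariance. The boundary divisors are by definition the $\mathscr{G}$-invariant prime divisors, and these are exactly the components $E_1^-,\dots,E_r^-$ of the complement of the open orbit, each invariant because its center is. For $n$ odd the component $E_{(n-1)/2}^-=D_{n+1}^-$ is the full Pfaffian locus, and it is $SL(n+1)$-invariant because $\pf(AZA^t)=\det(A)\pf(Z)=\pf(Z)$; this is exactly why the top Pfaffian is a boundary divisor rather than a color. For the colors I would fix the Borel subgroup $\mathscr{B}$ of upper-triangular matrices: for such $B$ the bottom-right $(2j+2)\times(2j+2)$ block of $BZB^t$ is $B_0Z_0B_0^t$, with $B_0$ the corresponding block of $B$, so the sub-Pfaffian defining $D_{2j+2}$ obeys $\pf(B_0Z_0B_0^t)=\det(B_0)\pf(Z_0)$ and is a $\mathscr{B}$-semi-invariant; thus each $D_{2j+2}^-$ is $\mathscr{B}$-invariant, and it fails to be $\mathscr{G}$-invariant precisely when the block is proper, since a general $A$ does not stabilize the flag fixed by $\mathscr{B}$. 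The proper sub-Pfaffians are therefore colors, yielding the stated lists. The two hardest points are the orbit-closure axiom above and the completeness of the color list---that no further $\mathscr{B}$-invariant prime divisors occur---both of which demand control of the orbit stratification on the iterated blow-up rather than on $\mathbb{P}^{N_-}$ alone, where the count of strata already rises from linear to exponential in $n$.
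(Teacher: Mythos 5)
Your Borel-block computation identifying the sub-Pfaffian divisors $D_{2j+2}^-$ as $\mathscr{B}$-invariant but not $\mathscr{G}$-invariant is exactly the paper's argument, and your Picard bookkeeping (including the observation that for $n$ odd the last blow-up in Construction \ref{css} is along a Cartier divisor, so that $E^-_{(n-1)/2}$ is the strict transform of the Pfaffian hypersurface and contributes no new class) also matches. But the two points you yourself flag as the ``hardest'' are genuine gaps, and for the first of them the paper has a short closing argument that your plan misses entirely. Completeness of the color list does \emph{not} require controlling the $\mathscr{B}$-orbit stratification of the iterated blow-up. The paper invokes \cite[Remark 4.5.5.3]{ADHL15}: for a wonderful spherical variety the big cell, i.e.\ the complement of the union of the colors, is an affine space, hence has only constant invertible global functions, and therefore $\Pic(\mathcal{A}(n))$ is freely generated by the colors. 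So the number of colors equals the Picard rank, which your own blow-up computation already gives as $\frac{n-3}{2}+1$ ($n$ odd) or $\frac{n-2}{2}+1$ ($n$ even); since you have exhibited exactly that many distinct colors, they are all of them. The same computation then settles the boundary list: the paper pushes any $\mathscr{G}$-invariant prime divisor that is not $f^-$-exceptional forward to $\mathbb{P}^{N_{-}}$, where, by the color analysis, the only $\mathscr{G}$-invariant hypersurface is the last secant variety of $\mathcal{G}(1,n)$.

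The second gap is wonderfulness itself. You propose to verify the orbit-closure axiom by labelling orbits by the rank sequences of complete skew-forms and matching them bijectively with the strata $\bigcap_{i\in I}E_i^-$, but this is only a plan, and it is essentially the content of \cite[Theorem 11.1]{Tha99}, which the paper simply cites (your equivariance and open-orbit remarks are correct but are the easy part). This matters doubly: the counting argument above that rescues your color list is only available once wonderfulness (hence, by Luna's theorem as in Remark \ref{sw}, sphericity) is in hand. As written, your proposal leaves both the foundation (wonderfulness) and the completeness step resting on unexecuted sketches, whereas the paper disposes of the first by citation and of the second by the Picard-rank count.
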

\begin{proof}
By Construction \ref{css} the space of complete skew-forms can be obtained as a sequence of blow-ups of smooth varieties along smooth centers. Note that if $n$ is odd then the strict transform of $\sec_{\frac{n-1}{2}}(\mathcal{G}(1,n))$ is a Cartier divisor in $\mathcal{A}[n]_{n-1}$ and hence the last blow-up in Construction \ref{css} is an isomorphism.

By \cite[Theorem 11.1]{Tha99} the variety $\mathcal{A}(n)$ is wonderful with the action of $\mathscr{G}=SL(n+1)$ in Remark \ref{l1}. Therefore, by Remark \ref{sw} it is in particular spherical. We will consider the Borel subgroup
\begin{equation}\label{borel}
\mathscr{B} = \{A\in SL(n+1) \: |\: A \: \text{is upper triangular}\}
\end{equation}

By Remark \ref{l1} an element $A\in \mathscr{G}$ maps the matrix $Z$ to the matrix $\overline{Z}=AZA^{t}$. Now, set
$$
A_{2,2}^i=\left(
\begin{array}{ccc}
a_{n-i+1,n-i+1} & \dots & a_{n-i+1,n}\\ 
\vdots & \ddots & \vdots\\ 
a_{n,n-i+1} & \dots & a_{n,n}
\end{array}\right)
$$
$$
Z_{2,2}^i=\left(
\begin{array}{ccc}
z_{n-i+1,n-i+1} & \dots & z_{n-i+1,n}\\ 
\vdots & \ddots & \vdots\\ 
z_{n,n-i+1} & \dots & z_{n,n}
\end{array}\right)\quad
\overline{Z}_{2,2}^i=\left(
\begin{array}{ccc}
\overline{z}_{n-i+1,n-i+1} & \dots & \overline{z}_{n-i+1,n}\\ 
\vdots & \ddots & \vdots\\ 
\overline{z}_{n,n-i+1} & \dots & \overline{z}_{n,n}
\end{array}\right)
$$
and subdivide the matrices $A,Z,\overline{Z}$ in blocks as follows
$$
A = \left(\begin{array}{cc}
A_{1,1}^i & A_{1,2}^i\\ 
A_{2,1}^i & A_{2,2}^i
\end{array}\right)\:
Z = \left(\begin{array}{cc}
Z_{1,1}^i & Z_{1,2}^i\\ 
Z_{2,1}^i & Z_{2,2}^i
\end{array}\right)\:
\overline{Z} = \left(\begin{array}{cc}
\overline{Z}_{1,1}^i & \overline{Z}_{1,2}^i\\ 
\overline{Z}_{2,1}^i & \overline{Z}_{2,2}^i
\end{array}\right)
$$
Then for $i=1,\dots, n$ the matrix $AZA^{t}$ can be subdivided in four blocks as follows
$$
\left\lbrace\begin{array}{l}
\overline{Z}_{1,1}^i = A_{1,1}^iZ_{1,1}^iA_{1,1}^{i\:t}+A_{1,2}^iZ_{2,1}^iA_{1,1}^{i\:t}+A_{1,1}^{i\:t}Z_{1,2}^{i\:t}A_{1,2}^{i\:t}+A_{1,2}^{i\:t}Z_{2,2}^{i\:t}A_{1,2}^{i\:t}\\ 
\overline{Z}_{1,2}^i = A_{1,1}^iZ_{1,1}^iA_{2,1}^{i\:t}+A_{1,2}^iZ_{2,1}^iA_{2,1}^{i\:t}+A_{1,1}^iZ_{1,2}^iA_{2,2}^{i\:t}+A_{1,2}^iZ_{2,2}^iA_{2,2}^{i\:t}\\
\overline{Z}_{2,1}^i = A_{2,1}^iZ_{1,1}^iA_{1,1}^{i\:t}+A_{2,2}^iZ_{2,1}^iA_{1,1}^{i\:t}+A_{2,1}^iZ_{1,2}^iA_{1,2}^{i\:t}+A_{2,2}^iZ_{2,2}^iA_{1,2}^{i\:t}\\ 
\overline{Z}_{2,2}^i = A_{2,1}^iZ_{1,1}^iA_{2,1}^{i\:t}+A_{2,2}^iZ_{2,1}^iA_{2,1}^{i\:t}+A_{2,1}^iZ_{1,2}^iA_{2,2}^{i\:t}+A_{2,2}^iZ_{2,2}^iA_{2,2}^{i\:t}
\end{array}\right.
$$
Let us focus on the case $n$ odd, the even case can be worked out similarly. The degree $i$ hypersurface $\{\pf(Z_{2,2}^i)=0\}\subset\mathbb{P}^{N_{-}}$ is stabilized by the action of $\mathscr{G}$ if and only if $i = n+1$. Indeed, $\{\pf(Z_{2,2}^{n+1})=0\}\subset\mathbb{P}^{N_{-}}$ is the last secant variety of $\mathcal{G}(1,n)$ not filling the whole of $\mathbb{P}^{N_{-}}$. Note that since $\mathscr{G}$ stabilizes $\mathcal{G}(1,n)$ it must stabilize all its secant varieties. Now, consider the Borel subgroup $\mathscr{B}$ in (\ref{borel}) and take a matrix $A\in\mathscr{B}$. Then $A_{2,1}^i$ is the zero matrix, and $\overline{Z}_{2,2}^i = A_{2,2}^iZ_{2,2}^iA_{2,2}^{i\:t}$. Since we are in the skew-symmetric case $\overline{Z}_{2,2}^i$, $Z_{2,2}^i$ are skew-symmetric. Then we get 
$$\pf(\overline{Z}_{2,2}^i) = \pf(A_{2,2}^iZ_{2,2}^iA_{2,2}^{i\:t}) = \det(A_{2,2}^i)\pf(Z_{2,2}^i)$$
Note that $A_{2,2}^i$ is upper triangular while $A_{2,2}^{i\:t}$ is lower triangular, and that the diagonals of $A_{2,2}^i$ and $A_{2,2}^{i\:t}$ are made of elements of the diagonal of $A$. 

Therefore, $\det(A_{2,2}^i)\neq 0$, $\det(A_{2,2}^{i\:t})\neq 0$ and hence the hypersurface $\{\pf(Z_{2,2}^i)=0\}\subset\mathbb{P}^{N_{-}}$ is stabilized by the action of the Borel subgroup $\mathscr{B}$ on $\mathbb{P}^{N_{-}}$. Therefore, the strict transform $D_{2i+2}$ of $\{\pf(Z_{2,2}^i)=0\}\subset \mathbb{P}^{N_{-}}$ is stabilized by the action of $\mathscr{B}$, and it is stabilized by the action of $\mathscr{G}$ on $\mathcal{A}(n)$ if and only if $i = n+1$. 

As noticed in \cite[Remark 4.5.5.3]{ADHL15}, if $(X,\mathscr{G},\mathscr{B},x_0)$ is a spherical wonderful variety with colors $D_1,\dots,D_s$ the big cell $X\setminus (D_1\cup\dots \cup D_s)$ is an affine space. Therefore, it admits only constant invertible global functions and $\Pic(X) = \mathbb{Z}[D_1,\dots,D_s]$. Now, the Picard rank of $\mathcal{A}(n)$ is $\frac{n-3}{2}+1$ if $n$ is odd and $\frac{n-2}{2}+1$ if $n$ is even, and we found exactly $\frac{n-3}{2}+1$ colors if $n$ is odd and $\frac{n-2}{2}+1$ colors if $n$ is even. This concludes the proof of the statement on the set of colors of $\mathcal{A}(n)$.

Now, note that since $\mathscr{G}$ stabilizes the secant varieties of $\mathcal{G}(1,n)$, \cite[Chapter II, Section 7, Corollary 7.15]{Har77} yields that $\mathscr{G}$ stabilizes the exceptional divisors $E_i^{-}$ and the strict transform of the last secant variety of $\mathcal{G}(1,n)$. Therefore, these are boundary divisors. 

Now, let $D\subset\mathcal{A}(n)$ be a $\mathscr{G}$-invariant divisor which is not exceptional for the blow-up morphism $f^{-}:\mathcal{A}(n)\rightarrow \mathbb{P}^{N_{-}}$ in Construction \ref{css}. Then $f_{*}^{-}D\subset\mathbb{P}^{N_{-}}$ is $\mathscr{G}$-invariant as well. Hence, in particular $f_{*}^{-}D\subset\mathbb{P}^{N_{-}}$ is $\mathscr{B}$-invariant. On the other hand, by the previous computation of the colors of $\mathcal{A}(n)$ we have that the only $\mathscr{G}$-invariant hypersurface in $\mathbb{P}^{N_{-}}$ is the last secant variety of $\mathcal{G}(1,n)$. 
\end{proof}

The following result will be fundamental in order to write down the classes of the colors in Proposition \ref{p1} in terms of the generators of the Picard group.

\begin{Lemma}\label{mult}
Consider the hypersurface $Y_k^{-}\subset\mathbb{P}^{N_{-}}$ defined as the zero locus of a $(2k+2)\times (2k+2)$ sub-Pfaffian of the matrix $Z$ in (\ref{matrix}). Then 
$$
\mult_{\sec_h(\mathcal{G}(1,n))}Y_k^{-} =
\left\lbrace\begin{array}{ll}
k-h+1 & \text{if h $\leq$ k}\\ 
0 & \text{if h $>$ k}
\end{array}\right.
$$ 
\end{Lemma}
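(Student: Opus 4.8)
The plan is to read off the multiplicity as the order of vanishing of the sub-Pfaffian along $\sec_h(\mathcal{G}(1,n))$ at a general point, computed along a general normal line. First, permuting the coordinates $z_{ij}$ (a congruence by a permutation matrix, which preserves every $\sec_h(\mathcal{G}(1,n))$ and permutes the sub-Pfaffians among themselves) reduces us to the case in which $Y_k^{-}$ is the zero locus of the leading sub-Pfaffian $f:=\pf(Z_S)$, where $S=\{0,\dots,2k+1\}$ and $Z_S$ is the principal submatrix of $Z$ on the rows and columns indexed by $S$. Writing $V_S\subset V$ for the span of the corresponding basis vectors, $Z_S$ is the restricted skew form $Z|_{V_S}$, and $f$ depends only on the entries $z_{ij}$ with $i,j\in S$. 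If $h>k$ then a general $Z_0\in\sec_h(\mathcal{G}(1,n))$ has $\rank(Z_0)=2h\ge 2k+2$, and its principal block $Z_{0,S}=Z_0|_{V_S}$ is generically of full rank $2k+2$, so $f(Z_0)\neq 0$; hence $\sec_h(\mathcal{G}(1,n))\not\subset Y_k^{-}$ and the multiplicity is zero, as claimed.

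Assume now $h\le k$ and fix a general $Z_0$ of rank $2h$. Then $U:=\ker Z_0$ has dimension $n+1-2h$, and since for general $Z_0$ the subspace $U$ is in general position with respect to $V_S$, the intersection $U\cap V_S$ has dimension $2(k-h+1)$. The first point I would establish is that the block $W_0:=Z_{0,S}$ has rank exactly $2h$ for general $Z_0$; this is a generic condition because it holds for at least one rank-$2h$ matrix, namely the one carried by a standard symplectic block on the first $2h\le 2k+2$ coordinates. Consequently $\ker W_0\subset V_S$ has dimension $2k+2-2h=2(k-h+1)$, and since $U\cap V_S\subseteq\ker W_0$ with equal dimensions we obtain the key identification $\ker W_0=U\cap V_S$.

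The multiplicity then equals $\operatorname{ord}_{t=0}f(Z_0+tN)$ for a general normal direction $N$ to $\sec_h(\mathcal{G}(1,n))$ at $Z_0$, and $f(Z_0+tN)=\pf(W_0+tN_S)$ with $N_S=N|_{V_S}$. To analyse the Pfaffian near the rank-$2h$ point $W_0$ in the space of $(2k+2)\times(2k+2)$ skew matrices, I would apply a congruence carrying $W_0$ to the block form with a standard symplectic $2h\times 2h$ block and a zero block of size $2(k-h+1)$ (this multiplies $\pf$ by a nonzero scalar and so leaves vanishing orders unchanged), together with the Pfaffian Schur-complement identity
\begin{equation*}
\pf\begin{pmatrix}P & Q\\ -Q^{t} & R\end{pmatrix}=\pf(P)\,\pf\!\left(R+Q^{t}P^{-1}Q\right),
\end{equation*}
valid when $P$ is invertible. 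With $P$ near the symplectic block this exhibits, locally, $\pf(W)=(\text{unit})\cdot\pf(\widetilde R)$, where $\widetilde R=R+Q^{t}P^{-1}Q$ is the skew Schur-complement form on $\ker W_0$, of size $2(k-h+1)$, and $\{\widetilde R=0\}$ is $\sec_h(\mathcal{G}(1,n))$ near $W_0$. Along $W_0+tN_S$ one computes $\widetilde R=t\,(N_S)|_{\ker W_0\times\ker W_0}+O(t^2)$ as a skew form on $\ker W_0$, so that $\operatorname{ord}_{t=0}\pf(W_0+tN_S)=k-h+1$ exactly when the restricted form $(N_S)|_{\ker W_0\times\ker W_0}$ is nondegenerate.

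To finish, I would use that the normal directions $N$ to $\sec_h(\mathcal{G}(1,n))$ at $Z_0$ are parametrized by the skew forms on $U=\ker Z_0$, via $N\mapsto N|_{U\times U}$, and that under the identification $\ker W_0=U\cap V_S$ one has $(N_S)|_{\ker W_0\times\ker W_0}=N|_{(U\cap V_S)\times(U\cap V_S)}$. A general skew form on $U$ restricts to a nondegenerate form on the fixed even-dimensional subspace $U\cap V_S$ of dimension $2(k-h+1)\ge 2$; hence for a general normal $N$ the Pfaffian of $(N_S)|_{\ker W_0\times\ker W_0}$ is nonzero, the order along a general normal line is exactly $k-h+1$, and this is the sought multiplicity. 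The step I expect to be the main obstacle is the local normal-form analysis together with the identification $\ker W_0=U\cap V_S$: one must match the transverse directions of $\sec_h(\mathcal{G}(1,n))$ in the ambient space with the directions in the $(2k+2)\times(2k+2)$ block that actually move the Schur complement $\widetilde R$, and then check that a general such direction yields a nondegenerate—not merely nonzero—skew form, so that the Pfaffian of size $2(k-h+1)$ does not vanish.
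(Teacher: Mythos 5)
Your proof is correct, but it takes a genuinely different route from the paper's. The paper's argument is purely algebraic: the order-$j$ partial derivatives of a $(2k+2)\times(2k+2)$ sub-Pfaffian are (up to sign) $(2k+2-2j)\times(2k+2-2j)$ sub-Pfaffians, hence vanish on $\sec_{k-j}(\mathcal{G}(1,n))$, giving $\mult_{\sec_{k-j}}Y_k^{-}\ge j+1$; and since the non-zero order-$(j+1)$ derivatives have degree $k-j$, strictly below the degree $k-j+1$ in which $I(\sec_{k-j}(\mathcal{G}(1,n)))$ is generated, they cannot all lie in that ideal, giving equality; the case $h>k$ is likewise a degree comparison. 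Your argument is instead local and geometric: you work at a general rank-$2h$ point $Z_0$, identify $\ker W_0=U\cap V_S$, and combine the Pfaffian Schur complement with the description of the normal space of the rank locus as skew forms on $\ker Z_0$. What yours buys: it uses only the set-theoretic identification of $\sec_h(\mathcal{G}(1,n))$ with the rank-$\le 2h$ locus plus standard linear algebra, so it avoids the ideal-theoretic input from \cite{LO13} (generation of $I(\sec_h(\mathcal{G}(1,n)))$ in degree $h+1$) on which the paper's proof leans twice, and it exhibits the multiplicity structurally as the Pfaffian of a transverse skew form of size $2(k-h+1)$. What the paper's buys: brevity, with both inequalities falling out of a single derivative computation. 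One point you should phrase more carefully: the opening reduction ``the multiplicity equals $\operatorname{ord}_{t=0}f(Z_0+tN)$ for a general \emph{normal} direction'' is not automatic in general (it requires that the leading form of $f$ at $Z_0$ not vanish identically on the normal space). However, your Schur-complement expansion applies verbatim to an \emph{arbitrary} direction $N$, since only $(N_S)|_{\ker W_0}$ enters; stated that way it yields $\operatorname{ord}_{t=0}f(Z_0+tN)\ge k-h+1$ for every $N$, with equality for the directions you exhibit, which pins down the multiplicity with no appeal to normal directions at all. (Also, the parenthetical claim that $\{\widetilde R=0\}$ is $\sec_h(\mathcal{G}(1,n))$ near $W_0$ should say rather that it is the rank-$\le 2h$ locus of the $(2k+2)$-block; since this remark is never used, it is harmless.)
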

\begin{proof}
If $h > k$ then $\deg(Y_k^{-}) = k+1< h+1$ and since $I(\sec_h(\mathcal{G}(1,n)))$ is generated by the $(2h+2)\times (2h+2)$ sub-Pfaffians, which are hypersurfaces of degree $h+1$, the hypersurface $Y_k^{-}$ can not contain $\sec_h(\mathcal{G}(1,n))$.

Let $h\leq k$. Without loss of generality we may consider the hypersurface $Y_k^{-}$ given by $Y_k = \{\pf(G)=0\}$ where 
\stepcounter{thm}
\begin{equation}\label{pol1}
G = F(z_{0,1},\dots,z_{2k,2k+1}) = \det\left(
\begin{array}{cccc}
0 & z_{0,1} & \dots & z_{0,2k+1}\\ 
-z_{0,1} & \dots & \dots & z_{1,2k+1}\\ 
\vdots & \ddots & \ddots & \vdots\\
-z_{0,2k} & \dots & \dots & z_{2k,2k+1}\\
-z_{0,2k+1} & \dots & -z_{2k,2k+1} & 0 
\end{array}\right)
\end{equation}   
Note that
$$\frac{\partial^j \pf(G)}{\partial z_{0,0}^{j_{0,0}},\dots,\partial z_{k,k}^{j_{k,k}}}=0, \: j_{0,0}+\dots +j_{k,k}$$
whenever either $j_{r,s}\geq 2$ for some $r,s = 0,\dots,k$ or in the expression of the partial derivative there are at least two indexes either of type $j_{r,s},j_{r',s}$ or of type $j_{r,s},j_{s,s'}$. In all the other cases a partial derivative of order $j$ is the Pfaffian of the $(2k+2-2j)\times (2k+2-2j)$ minor of the matrix in \ref{pol1} obtained by deleting the rows and the columns crossing in the elements which correspond to the variables with respect to which we are deriving. Note that for each variable we are deleting two rows and two columns. 

Therefore, all the partial derivatives of order $j$ of $\pf(G)$ vanish on $\sec_{k-j}(\mathcal{G}(1,n))$. On the other hand, since the ideal of $\sec_{k-j}(\mathcal{G}(1,n))$ is generated in degree $k-j+1$, the non-zero partial derivatives of order $j+1$ of $\pf(G)$, which have degree $k-j$, can not belong to $I(\sec_{k-j}(\mathcal{G}(1,n)))$. Therefore, $\mult_{\sec_{k-j}(\mathcal{G}(1,n))}Y_k^{-}=j+1$ that is $\mult_{\sec_h(\mathcal{G}(1,n))}Y_k^{-} = h-k+1$ if $h\leq k$.   
\end{proof}

Now, we are ready to compute the effective and nef cones of the spaces of complete skew-forms.

\begin{thm}\label{theff}
If $n$ is odd we have $D_{2}^{-}\sim H^{-}$,
$$D_{2k+2}^{-}\sim (k+1)H^{-}-\sum_{h=1}^k(k-h+1)E_{h}^{-}$$
for $k = 1,\dots,\frac{n-3}{2}$, and $D_{n+1}^{-}\sim\frac{n+1}{2}H^{-}-\sum_{h=1}^{\frac{n-3}{2}}\frac{n-2h+1}{2}E_h$.

Furthermore, $\{E_{1}^{-},\dots,E_{\frac{n-3}{2}}^{-},D_{n+1}^{-}\}$ generate the extremal rays of $\Eff(\mathcal{A}(n))$ and $\{D_{2j+2}^{-},\: j = 0,\dots,\frac{n-3}{2}\}$ generate the extremal rays of $\Nef(\mathcal{A}(n))$.

If $n$ is even we have $D_{2}^{-}\sim H^{-}$,
$$D_{2k+2}^{-}\sim (k+1)H^{-}-\sum_{h=1}^k(k-h+1)E_{h}^{-}$$
for $k = 1,\dots,\frac{n-2}{2}$, $\{E_1^{-},\dots,E_{\frac{n-2}{2}}^{-},D_{n}^{-}\}$ generate the extremal rays of $\Eff(\mathcal{A}(n))$ and moreover $\{D_{2j+2}^{-},\: j = 0,\dots,\frac{n-2}{2}\}$ generate the extremal rays of $\Nef(\mathcal{A}(n))$.
\end{thm}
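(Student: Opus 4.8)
The plan is to establish the three assertions in turn — the linear equivalence classes of the $D_{2k+2}^-$, the structure of $\Eff(\mathcal{A}(n))$, and that of $\Nef(\mathcal{A}(n))$ — concentrating on the case $n$ odd, the even case being entirely analogous. For the classes, I would start from the fact that $D_{2k+2}^-$ is by definition the strict transform of the sub-Pfaffian hypersurface $Y_k^-\subset\mathbb{P}^{N_-}$, which has degree $k+1$ and hence class $(k+1)H^-$. Passing to the strict transform through the tower of Construction \ref{css} subtracts, at the blow-up of (the strict transform of) $\sec_h(\mathcal{G}(1,n))$, the class $m_hE_h^-$ with $m_h=\mult_{\sec_h(\mathcal{G}(1,n))}Y_k^-$; feeding in the multiplicities of Lemma \ref{mult}, namely $m_h=k-h+1$ for $h\le k$ and $0$ otherwise, yields at once
\[
D_{2k+2}^-\sim (k+1)H^--\sum_{h=1}^{k}(k-h+1)E_h^-.
\]
For the top divisor I would recall from the proof of Proposition \ref{p1} that when $n$ is odd the last center $\sec_{\frac{n-1}{2}}(\mathcal{G}(1,n))$ is already Cartier, so the final blow-up is an isomorphism and the sum truncates at $h=\frac{n-3}{2}$, giving the stated class of $D_{n+1}^-$.

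For $\Eff(\mathcal{A}(n))$ I would proceed as follows. By Proposition \ref{p1} the classes $E_1^-,\dots,E_{\frac{n-3}{2}}^-,D_{n+1}^-$ are exactly the boundary divisors; they are effective, and from the class computation above they are triangular with respect to the basis $H^-,E_1^-,\dots,E_{\frac{n-3}{2}}^-$ of $\Pic(\mathcal{A}(n))$, hence themselves a basis, so the cone they span is simplicial and each of them is extremal in it. Since $\mathcal{A}(n)$ is spherical, every effective class is equivalent to a $\mathscr{B}$-invariant effective divisor, so $\Eff(\mathcal{A}(n))$ is generated by the boundary divisors together with the colors $D_{2j+2}^-$; it remains to see that the colors are redundant. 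Solving the class of $D_{n+1}^-$ for $H^-$ expresses $H^-$ as a manifestly non-negative combination of the boundary divisors, and substituting into each $D_{2k+2}^-$ reduces the claim to the elementary inequality $(k+1)(n-2h+1)\ge (k-h+1)(n+1)$ for $h\le k\le\frac{n-3}{2}$, whose difference equals $h(n-2k-1)\ge 0$. This gives $\Eff(\mathcal{A}(n))=\langle E_1^-,\dots,E_{\frac{n-3}{2}}^-,D_{n+1}^-\rangle$.

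For $\Nef(\mathcal{A}(n))$ the first half is easy. Each color $D_{2j+2}^-$ is the base-point-free mobile part of the linear system of $(2j+2)$-sub-Pfaffians: this system has base locus $\sec_j(\mathcal{G}(1,n))$, along which by Lemma \ref{mult} all its members have common multiplicity $j-h+1$ on $\sec_h(\mathcal{G}(1,n))$, so the blow-ups of Construction \ref{css} resolve it and the strict transform of a general member is precisely $D_{2j+2}^-$. Hence each $D_{2j+2}^-$ is globally generated, in particular nef, and it is the pullback of $\mathcal{O}(1)$ along the corresponding factor of the morphism (\ref{closure}); being again triangular in the Picard basis, the $D_{2j+2}^-$ span a full-dimensional simplicial subcone of $\Nef(\mathcal{A}(n))$. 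The content is the reverse inclusion, which I would obtain by duality: it suffices to produce, for each facet of $\langle D_{2j+2}^-\rangle$, an effective curve lying in the orthogonal of that facet and pairing positively with the one remaining generator, for then any nef class necessarily has non-negative coordinates in the $D_{2j+2}^-$ basis. The natural test curves are the fibres of the exceptional divisors $E_h^-$ together with strict transforms of multisecant lines to $\mathcal{G}(1,n)$ meeting the secant varieties with prescribed multiplicities; one computes their intersection numbers against the $D_{2j+2}^-$ and checks that the resulting classes generate the dual cone $\langle D_{2j+2}^-\rangle^{\vee}$.

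I expect this last step to be the main obstacle. The exceptional divisors lie over the \emph{nested} chain $\sec_1(\mathcal{G}(1,n))\subset\cdots\subset\sec_{\frac{n-1}{2}}(\mathcal{G}(1,n))$, so the intersection numbers $E_{h'}^-\cdot C$ of a test curve are controlled by the delicate geometry of the successive strict transforms, and naive fibre classes alone cut out a cone strictly larger than $\langle D_{2j+2}^-\rangle$ (already for $\mathcal{A}(5)$ the fibre of $E_1^-$ and a general line yield only $c_1\ge 0$ and $c_0+2c_1\ge 0$, not $c_0\ge 0$); the missing extremal direction is supplied by the strict transform of a genuine bisecant line to $\mathcal{G}(1,n)$, and in general one must choose the multisecant lines so that their strict transforms realise the extremal rays of $\NEbar(\mathcal{A}(n))$ dual to the $D_{2j+2}^-$. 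Establishing that these classes are effective and that their span is exactly the dual cone is the crux of the argument.
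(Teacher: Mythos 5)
Your computation of the divisor classes and your treatment of the effective cone are essentially correct, and they follow the paper's own route: the classes come from the strict-transform formula through the tower of Construction \ref{css} combined with the multiplicities of Lemma \ref{mult}, and the effective cone is handled by reducing to $\mathscr{B}$-invariant divisors on the spherical variety $\mathcal{A}(n)$ (the paper cites \cite[Proposition 4.5.4.4]{ADHL15} for exactly this reduction). Your explicit check that the colors are redundant, i.e.\ that $D_{2k+2}^-$ is a non-negative combination of the boundary divisors via the identity $(k+1)(n-2h+1)-(k-h+1)(n+1)=h(n-2k-1)\geq 0$, is correct and a useful supplement to the citation, and simpliciality of the resulting cone does give extremality of the generators.

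The genuine gap is in the nef cone. You establish (modulo a point noted below) the inclusion $\left\langle D_{2j+2}^{-}\right\rangle\subseteq\Nef(\mathcal{A}(n))$, but the reverse inclusion is exactly what you leave open, as you yourself say ("the crux of the argument"): the multisecant test curves realizing the dual basis of $\left\langle D_{2j+2}^{-}\right\rangle$ are never constructed, and you correctly observe that naive fibre classes cut out a strictly larger cone. Note also that you cannot repair this by quoting the curve classes of Proposition \ref{Morimov}, since in the paper those are \emph{deduced} from Theorem \ref{theff} by duality, so the argument would be circular. The paper closes this step with a single citation from the same spherical machinery you already used for the effective cone: by Proposition \ref{p1} the variety $\mathcal{A}(n)$ is wonderful and its colors are precisely the $D_{2j+2}^{-}$, and by Brion's theorem \cite[Section 2.6]{Br89} the nef cone of such a variety is generated by its colors. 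Declining to use that input for the nef cone, while using it for the effective cone, is what leaves your proof incomplete; if you insist on a geometric proof, the dual curves have to be produced explicitly (for instance inside the closed $SL(n+1)$-orbit), not just postulated. A smaller point: your claim that the blow-ups "resolve" the sub-Pfaffian system, so that $D_{2j+2}^{-}$ is base-point-free, also needs justification — the clean way is to use that $\mathcal{A}(n)$ is the closure of the graph of (\ref{closure}), hence carries a morphism $\pi_j$ to each factor, and to identify $\pi_j^{*}\mathcal{O}(1)$ with the class of $D_{2j+2}^{-}$ using the fact (from Lemma \ref{mult} and the $SL(n+1)$-homogeneity in the proof of Theorem \ref{gen}) that \emph{all} sub-Pfaffians of a given size have the same multiplicity along each secant variety.
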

\begin{proof}
Let $Y$ be a smooth and irreducible subvariety of a smooth variety $X$, and let $f:Bl_YX\rightarrow X$ be the blow-up of $X$ along $Y$ with exceptional divisor $E$. Then for any divisor $D\in \Pic(X)$ in $\Pic(Bl_YX)$ we have
$$\widetilde{D} \sim f^{*}D-\mult_{Y}(D) E$$
where $\widetilde{D}\subset Bl_YX$ is the strict transform of $D$, and $\mult_Y(D)$ is the multiplicity of $D$ at a general point of $Y$. 

This observation, together with Construction \ref{css} and Lemma \ref{mult}, leads us to the expression for $D_{2k+2}$ in the statement. Just notice that when $n$ is odd $\sec_{\frac{n-1}{2}}(\mathcal{G}(1,n))\subset\mathbb{P}^{N_{-}}$ is a hypersurface of degree $\frac{n+1}{2}$, and its strict transform via the morphism $f^{-}:\mathcal{A}(n)\rightarrow\mathbb{P}^{N_{-}}$ in Construction \ref{css} is the divisor $E^{-}_{\frac{n-1}{2}}$. The statements on the effective and the nef cones follow respectively from \cite[Proposition 4.5.4.4]{ADHL15} and \cite[Section 2.6]{Br89}.
\end{proof}

In the following, we will denote by $l^{-},e_i^{-}$ the classes in $N_1(\mathcal{A}(n))$ such that $H^{-}\cdot l^{-} = 1, H^{-}\cdot e_i^{-} = 0$ and $E^{-}_i\cdot l^{-} = 0, E^{-}_i\cdot e^{-}_j = -\delta_{i,j}$.

\begin{Proposition}\label{Morimov}
Let us write the class of a general curve $C\in N_1(\mathcal{A}(n))$ as $C = dl^{-}-\sum_{i}m_ie_i^{-}$. Then the cone of moving curves $\mov(\mathcal{A}(n))$ is defined by 
$$
\left\lbrace
\begin{array}{l}
m_i\geq 0 \text{ for $i=1,\dots \frac{n-3}{2}$}\\ 
\frac{n+1}{2}d-\sum_{i=1}^{\frac{n-3}{2}}\frac{n-2i+1}{2}m_i\geq 0
\end{array}\right. \text{if n is odd}
\quad
\left\lbrace\begin{array}{l}
m_i\geq 0 \text{ for $i=1,\dots \frac{n-2}{2}$}\\ 
\frac{n}{2}d-\sum_{i=1}^{\frac{n-2}{2}}\frac{n-2i}{2}m_i\geq 0 
\end{array}\right. \text{if n is even}
$$
Furthermore, the extremal rays of the Mori cone of $\mathcal{A}(n)$ are generated by the curves of class $l^{-}-2e_1^{-}+e_2^{-}$, $e_i^{-}-2e_{i+1}^{-}+e_{i+2}^{-}$ for $i = 1,\dots,\frac{n-7}{2}$, $e_{\frac{n-5}{2}}^{-}-2e_{\frac{n-3}{2}}^{-}$ and $e_{\frac{n-3}{2}}^{-}$ if $n$ is odd, and by the curves of class  $l^{-}-2e_1^{-}+e_2^{-}$, $e_i^{-}-2e_{i+1}^{-}+e_{i+2}^{-}$ for $i = 1,\dots,\frac{n-6}{2}$, $e_{\frac{n-4}{2}}^{-}-2e_{\frac{n-2}{2}}^{-}$ and $e_{\frac{n-2}{2}}^{-}$ if $n$ is even.
\end{Proposition}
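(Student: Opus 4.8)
The plan is to derive both cones by dualising the divisorial cones of Theorem~\ref{theff}. For the Mori cone I will use Kleiman's duality $\NEbar(\mathcal{A}(n)) = \Nef(\mathcal{A}(n))^{\vee}$, and for the moving curves the theorem of Boucksom--Demailly--P\u{a}un--Peternell, which identifies the closed cone of moving curves with the dual of the pseudo-effective cone of divisors; since $\mathcal{A}(n)$ is a Mori dream space the latter equals $\Eff(\mathcal{A}(n))$, so $\mov(\mathcal{A}(n)) = \Eff(\mathcal{A}(n))^{\vee}$. Everything then reduces to intersection numbers: from the defining relations of $l^{-}, e_i^{-}$ one gets, for $C = dl^{-}-\sum_i m_i e_i^{-}$, that $C\cdot H^{-}=d$ and $C\cdot E_i^{-}=m_i$, whence by Theorem~\ref{theff}
$$
C\cdot D_{2k+2}^{-} = (k+1)d - \sum_{h=1}^{k}(k-h+1)\,m_h .
$$

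For the moving cone I would impose $C\cdot F\geq 0$ as $F$ ranges over the generators of $\Eff(\mathcal{A}(n))$. In the odd case these are $E_1^{-},\dots,E_{(n-3)/2}^{-}$ and $D_{n+1}^{-}$: the first family yields $m_i = C\cdot E_i^{-}\geq 0$, while the last yields $C\cdot D_{n+1}^{-} = \tfrac{n+1}{2}d-\sum_{h}\tfrac{n-2h+1}{2}m_h\geq 0$, exactly the asserted inequalities; the even case is identical with $D_n^{-}$ in place of $D_{n+1}^{-}$. As the chosen generators of $\Eff(\mathcal{A}(n))$ are linearly independent (the $E_i^{-}$ are basis vectors, and $D_{n+1}^{-}$ has nonzero $H^{-}$-coefficient), $\Eff(\mathcal{A}(n))$ is simplicial and these are precisely its non-redundant facet inequalities, i.e. a complete description of $\mov(\mathcal{A}(n))$.

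For the Mori cone the structural point is that $\Nef(\mathcal{A}(n))$ is simplicial. Writing the generators $D_{2j+2}^{-}$ in the basis $(H^{-},E_1^{-},\dots)$ of $\Pic(\mathcal{A}(n))$, the class $E_k^{-}$ occurs for the first time in $D_{2k+2}^{-}$, with coefficient $-1$, so the transition matrix is triangular and the $D_{2j+2}^{-}$ are linearly independent; by Theorem~\ref{theff} they are the extremal generators, hence span a full-dimensional simplicial cone. Consequently $\NEbar(\mathcal{A}(n)) = \Nef(\mathcal{A}(n))^{\vee}$ is simplicial as well, and its extremal rays form the basis of $N_1(\mathcal{A}(n))$ dual to $\{D_{2j+2}^{-}\}$. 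It therefore suffices to check that the curve classes in the statement realise this dual basis, namely $C_a\cdot D_{2b+2}^{-}=\delta_{ab}$ for $a,b\in\{0,\dots,(n-3)/2\}$.

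This last verification is the only real computation, and it is where the bookkeeping concentrates. Setting $a_0=d$ and $a_h=-m_h$ for $h\geq 1$, the displayed formula becomes $C\cdot D_{2k+2}^{-}=\sum_{h=0}^{k}(k-h+1)a_h$, a weighting by the \emph{linear} function $h\mapsto k-h+1$. Each interior curve $l^{-}-2e_1^{-}+e_2^{-}$ and $e_i^{-}-2e_{i+1}^{-}+e_{i+2}^{-}$ has $a$-sequence equal to the second-difference stencil $(1,-2,1)$ supported at $(0,1,2)$ and $(i,i+1,i+2)$; since a second difference annihilates linear functions, the pairing with $D_{2k+2}^{-}$ vanishes both when the stencil lies entirely in the range $h\leq k$ and when it is disjoint from it, and evaluating the two overlapping cases $k=i,i+1$ directly gives $1$ and $0$, so the pairing equals $\delta_{ik}$. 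The truncated stencils $e_{(n-5)/2}^{-}-2e_{(n-3)/2}^{-}=(1,-2)$ and $e_{(n-3)/2}^{-}=(1)$ abut the top index $k=(n-3)/2$ and are settled by the same boundary evaluation, again yielding $\delta_{ab}$. Thus the intersection matrix is the identity, the listed curves are the dual basis and hence the extremal rays of $\NEbar(\mathcal{A}(n))$; the even case is verbatim the same with top index $(n-2)/2$. The only delicate point is these two boundary stencils, where the uniform second-difference cancellation no longer applies and the entries must be computed by hand.
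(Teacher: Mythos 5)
Your proposal is correct and takes essentially the same approach as the paper: the paper's proof likewise invokes Kleiman duality for $\NEbar(\mathcal{A}(n)) = \Nef(\mathcal{A}(n))^{\vee}$ and \cite[Theorem 2.2]{BDPP13} for $\mov(\mathcal{A}(n)) = \Eff(\mathcal{A}(n))^{\vee}$, and then simply says ``apply Theorem \ref{theff}.'' The only difference is that you carry out the dualization explicitly --- the triangularity/simpliciality observation and the second-difference verification that the listed curve classes form the dual basis --- which the paper leaves to the reader.
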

\begin{proof}
The Mori cone is dual to the nef cone, and by \cite[Theorem 2.2]{BDPP13} the cone of moving curves is dual to the effective cone. Therefore, to get the statement it is enough to apply Theorem \ref{theff}.
\end{proof}

\begin{Corollary}\label{Fano}
The space of complete skew-forms $\mathcal{A}(n)$ is a Fano variety of index $i_{\mathcal{A}(n)}$ given by the following table
\begin{center}
\begin{tabular}{c|c}
$n$ & $i_{\mathcal{A}(n)}$ \\ 
\hline 
$2$           & $3$   \\ 
$3$           & $6$   \\ 
$4$           & $2$   \\ 
$5$           & $5$   \\  
$\geq 6$    & $1$ \\ 
\hline 
\end{tabular} 
\end{center} 
\end{Corollary}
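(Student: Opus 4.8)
The plan is to compute the canonical class of $\mathcal{A}(n)$ explicitly in the basis of $\Pic(\mathcal{A}(n))$ provided by Proposition \ref{p1}, to verify that the anticanonical class is ample, and then to read off the index as the divisibility of $-K_{\mathcal{A}(n)}$ in the Picard lattice. For the canonical divisor I would iterate the standard blow-up formula $K_{\Bl_Y X} = \pi^{*}K_X + (\codim_X Y - 1)E$ along the sequence of Construction \ref{css}. The center of the $h$-th blow-up is the strict transform of $\sec_h(\mathcal{G}(1,n))$, which has the same codimension as $\sec_h(\mathcal{G}(1,n))$ in $\mathbb{P}^{N_{-}}$; by Remark \ref{dimsec} this codimension simplifies to $\binom{n+1-2h}{2}$. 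Combined with $K_{\mathbb{P}^{N_{-}}} = -\binom{n+1}{2}H^{-}$ this gives
$$-K_{\mathcal{A}(n)} = \binom{n+1}{2}H^{-} - \sum_{h=1}^{m}\left(\binom{n+1-2h}{2}-1\right)E_h^{-},$$
where $m = \tfrac{n-3}{2}$ for $n$ odd and $m = \tfrac{n-2}{2}$ for $n$ even; note that for $n$ odd the omitted index $h = \tfrac{n-1}{2}$ would contribute $\binom{2}{2}-1 = 0$, consistent with the last blow-up being an isomorphism.

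Next I would check that $-K_{\mathcal{A}(n)}$ is ample, so that $\mathcal{A}(n)$ is genuinely Fano. Since the nef cone has exactly $\rho(\mathcal{A}(n))$ generators $D_{2j+2}^{-}$ by Theorem \ref{theff}, it is simplicial, and its dual Mori cone is generated by the curve classes listed in Proposition \ref{Morimov}. By Kleiman's criterion it suffices to verify $-K_{\mathcal{A}(n)}\cdot C > 0$ for each of these generators. Using the pairing $H^{-}\cdot l^{-}=1$, $E_i^{-}\cdot e_j^{-}=-\delta_{ij}$, one has $-K_{\mathcal{A}(n)}\cdot C = \binom{n+1}{2}d - \sum_h(\binom{n+1-2h}{2}-1)m_h$ for $C = dl^{-}-\sum_h m_h e_h^{-}$. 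The second-difference identity $\binom{a}{2}-2\binom{a-2}{2}+\binom{a-4}{2}=4$ makes these intersection numbers collapse to the small positive integers $4$ and $5$ on every extremal curve, confirming ampleness.

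Finally, since $\{H^{-},E_1^{-},\dots,E_m^{-}\}$ is a $\mathbb{Z}$-basis of $\Pic(\mathcal{A}(n))$ and $-K_{\mathcal{A}(n)}$ is ample, the index equals the greatest common divisor of its coordinates,
$$i_{\mathcal{A}(n)} = \gcd\left(\binom{n+1}{2},\ \binom{n+1-2h}{2}-1 \ \ (h=1,\dots,m)\right).$$
Evaluating this for $n\leq 5$ reproduces the first four rows: $\gcd(3)=3$, $\gcd(6)=6$, $\gcd(10,2)=2$, $\gcd(15,5)=5$. For $n\geq 6$ I would exhibit two coprime entries. When $n$ is even the coefficients at $h=\tfrac{n-2}{2}$ and $h=\tfrac{n-4}{2}$ equal $\binom{3}{2}-1=2$ and $\binom{5}{2}-1=9$, while for $n$ odd the coefficients at $h=\tfrac{n-3}{2}$ and $h=\tfrac{n-5}{2}$ equal $\binom{4}{2}-1=5$ and $\binom{6}{2}-1=14$; in both cases $\gcd(2,9)=\gcd(5,14)=1$ forces the index to be $1$.

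The main technical point is getting the canonical class exactly right: correctly identifying the codimensions of the successive strict transforms of the secant varieties, and handling the parity-dependent range of $h$ (in particular the vanishing discrepancy of the terminal hypersurface blow-up when $n$ is odd). Once $-K_{\mathcal{A}(n)}$ is pinned down in the Picard basis, both the ampleness verification and the gcd evaluation are routine arithmetic.
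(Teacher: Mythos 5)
Your proposal is correct and takes essentially the same approach as the paper: you write $-K_{\mathcal{A}(n)}$ in the basis $H^{-},E_h^{-}$ via the blow-up formula and the codimensions of the secant varieties (your closed form $\binom{n+1-2h}{2}-1$ agrees with the paper's discrepancies $2,9$ for $n$ even and $5,14$ for $n$ odd), verify positivity against the extremal curves of Proposition \ref{Morimov}, and obtain the index from divisibility of the coefficients, which is what the paper does by intersecting with the classes $e_h^{-}$. One cosmetic slip: for $n$ even the extremal curve $e^{-}_{(n-2)/2}$ gives intersection number $2$, not $4$ or $5$, but this does not affect the positivity or the gcd argument.
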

\begin{proof}
We can write down $-K_{\mathcal{A}(n)}$ on the basis of $\Pic(\mathcal{A}(n))$ given by $H^{-}$ and the $E_i^{-}$. By Remark \ref{dimsec} we have
$$\codim_{\P^{N_{-}}}(\sec_{h}(\mathcal{G}(1,n))) = \frac{n^2+n-2h(2n-2h+1)}{2}$$ 
for any $h\leq \lfloor \frac{n-3}{2}\rfloor +1$. A straightforward computation shows that $-K_{\mathcal{A}(n)}$ has positive intersection with all the generators of $\NE(\mathcal{A}(n))$ in Proposition \ref{Morimov}.

Now, let us compute the index. First note that $\mathcal{A}(2)\cong\mathbb{P}^2$ and $\mathcal{A}(3)\cong\mathbb{P}^5$. Furthermore, we have $-K_{\mathcal{A}(4)} = 10H^{-}-2E_1^{-}$ and $-K_{\mathcal{A}(3)}\cdot e_1^{-} = 2$. Similarly, $-K_{\mathcal{A}(5)} = 15H^{-}-5E_1^{-}$ and $-K_{\mathcal{A}(3)}\cdot e_1^{-} = 5$. Finally, when $n\geq 6$ we distinguish two cases. If $n$ is odd the discrepancy of the exceptional divisor over the last blown-up secant variety, that is for $h = \frac{n-3}{2}$, is $5$ and the discrepancy of the exceptional divisor over the second last blown-up secant variety, that is for $h = \frac{n-3}{2}-1$, is $14$. Since $5$ and $14$ are coprime, by intersecting $-K_{\mathcal{A}(n)}$ with $e_{\frac{n-3}{2}}^{-}$ and $e_{\frac{n-3}{2}-1}^{-}$ we get that $i_{\mathcal{A}(n)} = 1$. Similarly, when $n$ is even the discrepancy of the exceptional divisor over the last blown-up secant variety, that is for $h = \frac{n-2}{2}$, is $2$ and the discrepancy of the exceptional divisor over the second last blown-up secant variety, that is for $h = \frac{n-2}{2}-1$, is $9$.
\end{proof}

\section{On the Cox ring}\label{sec3}
Let $X$ be a normal $\mathbb{Q}$-factorial variety with free and finitely generated divisor class group $\Cl(X)$. Fix a subgroup $G$ of the group of Weil divisors on $X$ such that the canonical map $G\rightarrow\Cl(X)$, mapping a divisor $D\in G$ to its class $[D]$, is an isomorphism. The \textit{Cox ring} of $X$ is defined as
$$\Cox(X) = \bigoplus_{[D]\in \Cl(X)}H^0(X,\mathcal{O}_X(D))$$
where $D\in G$ represents $[D]\in\Cl(X)$, and the multiplication in $\Cox(X)$ is defined by the standard multiplication of homogeneous sections in the field of rational functions on $X$. If $\Cox(X)$ is finitely generated as an algebra over the base field, then $X$ is said to be a \textit{Mori dream space}. A perhaps more enlightening definition, especially for the relation with the minimal model program, is the following. 

\begin{Definition}\label{def:MDS} 
A normal projective $\mathbb{Q}$-factorial variety $X$ is called a \emph{Mori dream space}
if the following conditions hold:
\begin{enumerate}
\item[-] $\Pic{(X)}$ is finitely generated, or equivalently $h^1(X,\mathcal{O}_X)=0$,
\item[-] $\Nef{(X)}$ is generated by the classes of finitely many semi-ample divisors,
\item[-] there is a finite collection of small $\mathbb{Q}$-factorial modifications
 $f_i: X \dasharrow X_i$, such that each $X_i$ satisfies the second condition above, and $
 \Mov{(X)} \ = \ \bigcup_i \  f_i^*(\Nef{(X_i)})$.
\end{enumerate}
\end{Definition}

The collection of all faces of all cones $f_i^*(\Nef{(X_i)})$ above forms a fan which is supported on $\Mov(X)$.
If two maximal cones of this fan, say $f_i^*(\Nef{(X_i)})$ and $f_j^*(\Nef{(X_j)})$, meet along a facet,
then there exist a normal projective variety $Y$, a small modification $\varphi:X_i\dasharrow X_j$, and $h_i:X_i\rightarrow Y$, $h_j:X_j\rightarrow Y$ small birational morphisms of relative Picard number one such that $h_j\circ\varphi = h_i$. The fan structure on $\Mov(X)$ can be extended to a fan supported on $\Eff(X)$ as follows. 

\begin{Definition}\label{MCD}
Let $X$ be a Mori dream space.
We describe a fan structure on the effective cone $\Eff(X)$, called the \emph{Mori chamber decomposition}.
We refer to \cite[Proposition 1.11]{HK00} and \cite[Section 2.2]{Ok16} for details.
There are finitely many birational contractions from $X$ to Mori dream spaces, denoted by $g_i:X\dasharrow Y_i$.
The set $\Exc(g_i)$ of exceptional prime divisors of $g_i$ has cardinality $\rho(X/Y_i)=\rho(X)-\rho(Y_i)$.
The maximal cones $\mathcal{C}$ of the Mori chamber decomposition of $\Eff(X)$ are of the form: $\mathcal{C}_i \ = \left\langle g_i^*\big(\Nef(Y_i)\big) , \Exc(g_i) \right\rangle$. We call $\mathcal{C}_i$ or its interior $\mathcal{C}_i^{^\circ}$ a \emph{maximal chamber} of $\Eff(X)$.
\end{Definition}

\begin{Remark}\label{sphMDS}
By the work of M. Brion \cite{Br93} we have that $\mathbb{Q}$-factorial spherical varieties are Mori dream spaces. An alternative proof of this result can be found in \cite[Section 4]{Pe14}. In particular, by Proposition \ref{p1} $\mathcal{A}(n)$ is a Mori dream space.
\end{Remark}

\begin{Remark}\label{dimCox}
By \cite[Proposition 2.9]{HK00} a normal and $\mathbb{Q}$-factorial projective
variety $X$ over an algebraically closed field $K$, with finitely generated Picard group is a Mori dream space if and only if $\Cox(X)$ is a finitely generated $K$-algebra. Furthermore, the following equality holds:
$$\dim\Cox(X) = \dim(X)+\rank\Pic(X)$$
where $\dim\Cox(X)$ is the Krull dimension of $\Cox(X)$, see for instance \cite[Theorem 3.2.1.4]{ADHL15}.
\end{Remark}

As a consequence of the computation of colors and boundary divisors in Proposition \ref{p1} we have the following result which will be central in the rest of the paper.

\begin{thm}\label{gen}
Let $Z_I^{-}$ be the minor of the matrix $Z$ (\ref{matrix}) built with the rows and columns indexed by $I = \{i_0,\dots,i_{2k+1}\}$, $T_{I}^{-}$ the canonical section associated to the strict transform of the hypersurface $\{\pf(Z_{I}^{-})=0\}\subset\mathbb{P}^{N_{-}}$, and $S_i^{-}$ the canonical section associated to the exceptional divisor $E_i^{-}$ in Construction \ref{css}. 

Then $\Cox(\mathcal{A}(n))$ is generated by the $T_{I}^{-}$ and the $S_i^{-}$ with $2\leq |I|\leq n-1$, $1\leq i\leq\frac{n-1}{2}$ if $n$ is odd, and $2\leq |I|\leq n$, $1\leq i\leq\frac{n-2}{2}$ if $n$ is even.
\end{thm}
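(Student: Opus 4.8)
The plan is to exploit the spherical structure of $\mathcal{A}(n)$ established in Proposition \ref{p1}, together with the complete reducibility of rational representations of a reductive group in characteristic zero. First I would record the divisor classes of the candidate generators. The section $S_i^{-}$ has class $E_i^{-}$ by definition, and since a $(2k+2)\times(2k+2)$ sub-Pfaffian has degree $k+1$ and, by Lemma \ref{mult}, multiplicity $k-h+1$ along $\sec_h(\mathcal{G}(1,n))$ for $h\leq k$, the iterated strict-transform formula $\widetilde{D}\sim f^{*}D-\mult_Y(D)E$ used in the proof of Theorem \ref{theff} yields
$$
[T_I^{-}] \sim (k+1)H^{-}-\sum_{h=1}^{k}(k-h+1)E_h^{-} = [D_{2k+2}^{-}], \qquad |I| = 2k+2.
$$
In particular every size-$(2k+2)$ sub-Pfaffian lies in the single class of the color $D_{2k+2}^{-}$, so the $T_I^{-}$ and $S_i^{-}$ are genuine homogeneous elements of $\Cox(\mathcal{A}(n))$.

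The heart of the argument is a reduction to semi-invariants. Write $R=\Cox(\mathcal{A}(n))$, a finitely generated $\mathscr{G}$-algebra which is a domain (Remark \ref{sphMDS}). I would first show that $R$ is generated as a $K$-algebra by its $\mathscr{B}$-semi-invariants. This is where characteristic zero enters: since $\mathscr{G}=SL(n+1)$ is reductive, every rational $\mathscr{G}$-module is semisimple, so the $\mathscr{G}$-stable subalgebra $R'\subseteq R$ generated by the $\mathscr{B}$-eigenvectors admits a $\mathscr{G}$-stable complement; were this complement nonzero it would contain a highest-weight vector, i.e. a $\mathscr{B}$-eigenvector lying outside $R'$, a contradiction. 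Hence $R=R'$. Because $\mathcal{A}(n)$ is spherical, each $\mathscr{B}$-isotypic component of $R$ is one dimensional, so every $\mathscr{B}$-eigenvector is, up to scalar, a monomial in the canonical sections $1_D$ of the $\mathscr{B}$-invariant prime divisors $D$; by Proposition \ref{p1} these are exactly the colors $D_{2j+2}^{-}$ and the boundary divisors $E_i^{-}$. Therefore $R$ is generated as a $K$-algebra by the $\mathscr{G}$-submodules $\mathscr{G}\cdot 1_{D_{2j+2}^{-}}$ and $\mathscr{G}\cdot 1_{E_i^{-}}$.

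It then remains to identify these $\mathscr{G}$-modules with the proposed generators. Each boundary divisor $E_i^{-}$ is $\mathscr{G}$-invariant, so $1_{E_i^{-}}=S_i^{-}$ spans a one-dimensional module. For the colors, Proposition \ref{p1} exhibits $D_{2k+2}^{-}$ as the strict transform of the bottom-right sub-Pfaffian, whose canonical section is a $\mathscr{B}$-highest-weight vector precisely because $\pf(\overline{Z}_{2,2})=\det(A_{2,2})\pf(Z_{2,2})$ for upper-triangular $A$. Under $Z\mapsto AZA^{t}$ the permutation matrices already carry this section to every size-$(2k+2)$ sub-Pfaffian, while a Pfaffian-compound (Cauchy--Binet) computation shows that a general $A$ produces only $K$-linear combinations of such sub-Pfaffians; since these are linearly independent and span an irreducible $\mathscr{G}$-module (a copy of $\bigwedge^{2k+2}V^{*}$), the module $\mathscr{G}\cdot 1_{D_{2k+2}^{-}}$ is exactly the span of $\{T_I^{-}:|I|=2k+2\}$ after taking strict transforms, an operation that is $\mathscr{G}$-equivariant since $\mathscr{G}$ fixes every center in Construction \ref{css}. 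Finally I would reconcile the ranges: for $n$ even the colors realize all sizes $2,\dots,n$ and the boundary divisors are $E_1^{-},\dots,E_{(n-2)/2}^{-}$, whereas for $n$ odd the full Pfaffian of size $n+1$ has strict transform $E_{(n-1)/2}^{-}=D_{n+1}^{-}$, so its module is already carried by $S_{(n-1)/2}^{-}$ and the $T_I^{-}$ need only range over $|I|\leq n-1$.

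The step I expect to be the main obstacle is the representation-theoretic identification in the last paragraph: one must verify both that the size-$(2k+2)$ sub-Pfaffians are linearly independent and span an irreducible $\mathscr{G}$-module, and that the $\mathscr{G}$-orbit of the single color section $1_{D_{2k+2}^{-}}$ does not leave this span. By contrast the reduction to $\mathscr{B}$-semi-invariants is clean in characteristic zero, and the statement that those semi-invariants are generated by the color and boundary sections rests on the multiplicity-freeness of the $\mathscr{B}$-action, which is exactly the sphericity supplied by Proposition \ref{p1}.
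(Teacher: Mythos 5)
Your proposal is correct in substance and follows essentially the same route as the paper. The paper's proof simply invokes \cite[Theorem 4.5.4.6]{ADHL15} (the Cox ring of a spherical variety under a semi-simple simply connected group is generated by the canonical sections of the boundary divisors together with the linear spans $\lin_K(\mathscr{G}\cdot 1_{D_i})$ of the orbits of the color sections) and then identifies each such span with the span of the size-$(2k+2)$ sub-Pfaffians: explicit elements of $SL(n+1)$ carry the bottom-right sub-Pfaffian to every $\pf(Z_I^{-})$, and the reverse containment comes from $\mathscr{G}$-stability of the degree-$(k+1)$ piece of $I(\sec_k(\mathcal{G}(1,n)))$, which is exactly the span of those sub-Pfaffians. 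Your permutation-matrix plus Pfaffian Cauchy--Binet argument is an equivalent way of doing this identification, and your handling of the ranges, in particular that for $n$ odd the full Pfaffian yields the boundary divisor $E_{\frac{n-1}{2}}^{-}=D_{n+1}^{-}$ rather than a color, agrees with the paper. The only real difference is that you reprove the generation theorem rather than cite it.

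That said, one claim in your middle paragraph is false as literally written and needs repair, though the repair is purely one of wording. It is not true that $R=\Cox(\mathcal{A}(n))$ is generated as a $K$-algebra by its $\mathscr{B}$-semi-invariants, nor is the subalgebra generated by the $\mathscr{B}$-eigenvectors $\mathscr{G}$-stable. Indeed, by your own multiplicity-freeness step every $\mathscr{B}$-eigenvector is, up to scalar, a monomial in the canonical sections of the colors and boundary divisors; since the $H^{-}$-coefficient of $[D_{2k+2}^{-}]$ is $k+1$ and that of each $[E_i^{-}]$ is non-positive, the only such monomial of degree $H^{-}$ is $T_{\{n-1,n\}}^{-}$ itself, so the subalgebra generated by the $\mathscr{B}$-eigenvectors meets the degree-$H^{-}$ component of $R$ in a single line, whereas that component is all of $\bigwedge^2V^{*}$, of dimension $\binom{n+1}{2}$. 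What your complement argument actually proves --- and all that the rest of your proof uses --- is the statement obtained by taking $R'$ to be the smallest $\mathscr{G}$-stable subalgebra containing the $\mathscr{B}$-eigenvectors: semisimplicity of rational $\mathscr{G}$-modules in characteristic zero, plus the existence of a $\mathscr{B}$-eigenvector in any nonzero finite-dimensional rational $\mathscr{B}$-module, then give $R=R'$, i.e.\ $R$ is generated by the $\mathscr{G}$-submodules spanned by the orbits of the canonical sections of the $\mathscr{B}$-stable prime divisors. That corrected statement is precisely \cite[Theorem 4.5.4.6]{ADHL15}, which the paper uses as a black box; with the wording fixed, your argument goes through.
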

\begin{proof}
By \cite[Theorem 4.5.4.6]{ADHL15} if $\mathscr{G}$ is a semi-simple and simply connected algebraic group and $(X,\mathscr{G},\mathscr{B},x_0)$ is a spherical variety with boundary divisors $E_1,\dots,E_r$ and colors $D_1,\dots,D_s$ then $\Cox(X)$ is generated as a $K$-algebra by the canonical sections of the $E_i$ and the finite dimensional vector subspaces $\lin_{K}(\mathscr{G}\cdot D_i)\subseteq \Cox(X)$ for $1\leq i\leq s$, where $\lin_{K}(\mathscr{G}\cdot D_i)$ is the linear subspace of $H^0(\mathcal{A}(n),\mathcal{O}_{\mathcal{A}(n)}(D_i))$ spanned by the orbit $\mathscr{G}\cdot D_i$, that is the smallest linear subspace of $H^0(\mathcal{A}(n),\mathcal{O}_{\mathcal{A}(n)}(D_i))$ containing the orbit $\mathscr{G}\cdot D_i$.

The colors of $\mathcal{A}(n)$ have been computed in Proposition \ref{p1}. Let $Z_{2k+2}^{br-}$ be the $(2k+2)\times (2k+2)$ bottom right minor of the matrix $Z$ in (\ref{matrix}). The Pl\"ucker embedding $\mathcal{G}(1,n)\hookrightarrow\mathbb{P}(\bigwedge^2V)$ maps a point    
$$
\left(
\begin{array}{ccc}
x_0 & \dots & x_n\\ 
y_0 & \dots & y_n
\end{array} 
\right)\in \mathcal{G}(1,n)
$$ 
to a skew-symmetric matrix with entries $z_{i,j} = x_iy_j-x_jy_i$. Let $Z_I^{-}$ be the $(2k+2)\times (2k+2)$ minor of $Z$ corresponding to $I = \{i_0,\dots,i_{2k+1}\}$, and consider an element $a\in SL(n+1)$ such that $a\cdot x_{n-2k-1} = x_{i_0},\dots, a\cdot x_n = x_{i_{2k+1}}$ and $a\cdot y_{n-2k-1} = y_{i_0},\dots, a\cdot y_n = y_{i_{2k+1}}$. Then $a\cdot \pf(Z_{2k+2}^{br-})=\pf(Z_{I}^{-})$. Finally, since $I(\sec_{k}(\mathcal{G}(1,n)))$ is generated by $(2k+2)\times (2k+2)$ sub-Pfaffians of $Z^{-}$ we conclude that $\lin_{K}(\mathscr{G}\cdot \pf(Z_{I}^{br-})) = I(\sec_{k}(\mathcal{G}(1,n)))$.
\end{proof}

As a first application of Theorem \ref{gen} we compute the number of extremal rays of the movable cones of the spaces of complete skew-forms. 

\begin{Proposition}\label{PropMov}
The movable cone of $\mathcal{A}(n)$ is generated by $2^{k-1}$ extremal rays if $n = 2k+1$ is odd, and by $2^{k-2}+1$ extremal rays if $n = 2k$ is even. 
\end{Proposition}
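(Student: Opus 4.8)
The plan is to read the movable cone directly off the degrees of the Cox ring generators produced in Theorem \ref{gen}, using the description of the moving cone of a Mori dream space in terms of its Cox ring: if $\Cox(X)$ is minimally generated by $\Cl(X)$-homogeneous, $\Cl(X)$-prime elements of degrees $w_1,\dots,w_m$, then
\begin{equation*}
\Mov(X) \;=\; \bigcap_{i=1}^m \cone\left(w_j \ : \ j\neq i\right),
\end{equation*}
see \cite[Section 3.3]{ADHL15}. First I would record these degrees. By Theorem \ref{theff} the canonical section $S_i^{-}$ of each boundary divisor $E_i^{-}$ has degree $[E_i^{-}]$, while by Theorem \ref{gen} every section $T_I^{-}$ with $|I|=2k+2$ has the same degree $[D_{2k+2}^{-}]$. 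Thus the distinct generator degrees are precisely the effective-cone and nef-cone generators of Theorem \ref{theff}, and the decisive bookkeeping is their multiplicity: a boundary divisor $E_i^{-}$ occurs as the degree of the \emph{single} generator $S_i^{-}$, whereas each color $D_{2k+2}^{-}$ is the common degree of the $\binom{n+1}{2k+2}>1$ generators $T_I^{-}$.

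This multiplicity count collapses the intersection. Removing one generator whose degree is shared by another leaves that degree available, so the corresponding term in the intersection equals the full effective cone and imposes no condition; hence only the multiplicity-one degrees — namely the boundary divisors — contribute, and
\begin{equation*}
\Mov(\mathcal{A}(n)) \;=\; \bigcap_{E \in \mathcal{B}(\mathcal{A}(n))} \cone\left(W\setminus\{E\}\right),
\end{equation*}
where $W$ denotes the set of all generator degrees. Here the odd and even cases diverge for a structural reason worth isolating. When $n$ is odd the strict transform $D_{n+1}^{-}$ of the last secant hypersurface equals the boundary divisor $E_{\frac{n-1}{2}}^{-}$ and so carries a unique canonical section; its extremal ray of $\Eff(\mathcal{A}(n))$ is therefore shaved. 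When $n$ is even the analogous extremal ray $D_n^{-}$ is instead a color of multiplicity $n+1$, so by the previous paragraph it is \emph{not} shaved and survives as an extremal ray of $\Mov(\mathcal{A}(n))$.

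It then remains to compute this finite intersection of subcones of $\Eff(\mathcal{A}(n))$ and to count its extremal rays, which is the main obstacle. Each $\cone(W\setminus\{E\})$ is the simplicial effective cone with the ray $E$ deleted and filled in by the neighbouring nef generators $D_{2j+2}^{-}$, so intersecting over all boundary divisors produces extremal rays lying outside $\Nef(\mathcal{A}(n))$, as already happens for $\mathcal{A}(7)$, where an extra ray $6H^{-}-3E_1^{-}-2E_2^{-}$ appears beyond the three nef generators. Using the explicit classes of Theorem \ref{theff} I would exhibit a cross-section of $\Mov(\mathcal{A}(n))$ and identify its combinatorial type: I expect a $\tfrac{n-3}{2}$-dimensional cube when $n$ is odd, with $2^{\frac{n-3}{2}}=2^{k-1}$ vertices, and a pyramid over a $\tfrac{n-4}{2}$-dimensional cube when $n$ is even, with $2^{\frac{n-4}{2}}+1=2^{k-2}+1$ vertices, the apex being exactly the surviving ray $D_n^{-}$. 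The count then follows; the delicate point is to prove that no further rays appear and that every claimed vertex is genuinely extremal, which I would settle either through an explicit facet description of each $\cone(W\setminus\{E\})$ or by induction on $n$, peeling off the top exceptional divisor $E_{\lfloor\frac{n-1}{2}\rfloor}^{-}$.
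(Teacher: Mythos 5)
Your proposal is correct and takes essentially the same route as the paper: the paper's entire proof consists of applying \cite[Proposition 3.3.2.3]{ADHL15} to the Cox ring generators of Theorem \ref{gen}, which is exactly your opening formula. The multiplicity bookkeeping you add (only the boundary classes, each carried by a single section, cut down the intersection, while every color class is shared by $\binom{n+1}{2j+2}>1$ Pfaffian sections, so that $D_{n+1}^{-}=E_{\frac{n-1}{2}}^{-}$ is shaved for odd $n$ while $D_{n}^{-}$ survives for even $n$) is the correct way to carry out that computation---it is the detail the paper delegates to the cited proposition and to its Maple script---and your predicted ray counts and cross-section combinatorics agree with the cases $\mathcal{A}(6)$, $\mathcal{A}(7)$, $\mathcal{A}(8)$ worked out in Section \ref{sec4}.
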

\begin{proof}
It is enough to apply \cite[Proposition 3.3.2.3]{ADHL15} to the generators of the Cox ring in Theorem \ref{gen}.
\end{proof}

\begin{Remark}
As an ancillary file in the arXiv version of the paper we include the Maple script ${\tt MovableCSS}$, based on Theorem \ref{gen} and \cite[Proposition 3.3.2.3]{ADHL15}, computing the extremal rays of $\Mov(\mathcal{A}(n))$.

For instance, with respect to the standard basis of the Picard group given by $H^{-}$ and the $E_{i}^{-}$, $\Mov(\mathcal{A}(6))$ is generated by $(3,-2,-1)$, $(1,0,0)$, $(2,-1,0)$, $\Mov(\mathcal{A}(7))$ is generated by $(3,-2,-1)$, $(1, 0, 0)$, $(6, -3, -2)$, $(2,-1,0)$, and $\Mov(\mathcal{A}(8))$ is generated by $(4,-3,-2,-1)$, $(3,-2,-1,0)$, $(1,0,0,0)$, $(2,-1,0,0)$, $(6,-3,-2,0)$.
\end{Remark}

Finally, by the following result we get an explicit description of the relations among the generators of $\Cox(\mathcal{A}(n))$ when $n\in \{4,5\}$.

\begin{thm}
The Cox rings of $\mathcal{A}(4)$ and $\mathcal{A}(5)$ are given respectively by 
$$\Cox(\mathcal{A}(4))\cong\frac{K[T_I^{-},S_1]_{ |I|\in\{2,4\}}}{I(\mathbb{S}_5)},\quad \Cox(\mathcal{A}(5))\cong\frac{K[T_I^{-},S_1,S_2]_{ |I|\in\{2,4\}}}{I(\mathbb{S}_6)}$$
where $\mathbb{S}_{n+1}\subset\mathbb{P}^{2^n-1}$, for $n\in\{4,5\}$, is the $\frac{n(n+1)}{2}$-dimensional Spinor variety.
\end{thm}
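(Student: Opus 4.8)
\emph{The plan} is to realize both sides as the homogeneous coordinate ring of the sub-Pfaffian (pure-spinor) parametrization and then to close with a Krull-dimension count. The classical input is that the spinor variety $\mathbb{S}_{n+1}\subset\mathbb{P}^{2^n-1}$, in its minimal embedding, is parametrized on its big cell by a generic $(n+1)\times(n+1)$ skew-symmetric matrix $Z$, its $2^n$ homogeneous coordinates being the sub-Pfaffians $\pf(Z_I^-)$ as $I$ runs over the even-cardinality subsets of $\{0,\dots,n\}$. First I would set up the dictionary between these coordinates and the generators of $\Cox(\mathcal{A}(n))$ supplied by Theorem \ref{gen}: the intermediate sub-Pfaffians correspond to the sections $T_I^-$, and the two extreme Pfaffians to the exceptional sections. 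For $n=4$ the empty Pfaffian matches $S_1^-$, giving $1+10+5=16=2^4$ generators; for $n=5$ the empty Pfaffian matches $S_1^-$ and the top Pfaffian $\pf(Z)$ matches $S_2^-$ — consistently, the zero locus of $\pf(Z)$ is the last secant variety $\sec_2(\mathcal{G}(1,5))$, whose strict transform is the boundary divisor $E_2^-=D_6^-$ — giving $1+15+15+1=32=2^5$ generators. Using the classes computed in Theorem \ref{theff} I would then check that the two natural gradings on the homogeneous coordinate ring $R(\mathbb{S}_{n+1})$ of $\mathbb{S}_{n+1}$ — the projective degree and the Pfaffian-size — correspond, under an invertible integral change of basis, to the bigrading of $\Cox(\mathcal{A}(n))$ by $\Cl(\mathcal{A}(n))=\mathbb{Z}\langle H^-,E_1^-\rangle$: on a class $hH^-+eE_1^-$ the projective degree is $h+e$ and the Pfaffian-size is $2h$.

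By Theorem \ref{gen} these sections already generate $\Cox(\mathcal{A}(n))$, so there is a graded surjection from the polynomial ring on the $2^n$ coordinates onto $\Cox(\mathcal{A}(n))$, and the point is to show that $I(\mathbb{S}_{n+1})$ lies in its kernel, so that it factors through $R(\mathbb{S}_{n+1})$. Since $\mathbb{S}_{n+1}$ is minuscule, its homogeneous ideal is generated by quadrics, and these are exactly the quadratic Pfaffian relations among the sub-Pfaffians. I would verify that each holds among the canonical sections by pulling it back along the morphism $f^-$ of Construction \ref{css} and using Lemma \ref{mult} to track the order of vanishing of each sub-Pfaffian along the secant varieties. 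The prototypical relation is the Pfaffian expansion $\pf(Z_{\{i,j,k,l\}}^-)=z_{ij}z_{kl}-z_{ik}z_{jl}+z_{il}z_{jk}$: after pulling back, the strict-transform formula together with $\mult_{\mathcal{G}(1,n)}=1$ (Lemma \ref{mult}) rewrites the left-hand side as $S_1^-\,T_{\{i,j,k,l\}}^-$ and the right-hand side as $T_{ij}^-T_{kl}^--T_{ik}^-T_{jl}^-+T_{il}^-T_{jk}^-$, so the two agree in $\Cox(\mathcal{A}(n))$. The complementary contraction relations — for $n=4$, the identities expressing that the signed maximal sub-Pfaffians span the kernel of the odd-order matrix $Z$ — have the feature that every monomial acquires the same exceptional multiplicity, so pulling back and cancelling this common factor $S_1^-$ in the domain $\Cox(\mathcal{A}(n))$ leaves the relation among the $T_I^-$; for $n=5$ the analogous Pfaffian relations additionally involve the top section $S_2^-=\pf(Z)$ and are handled identically. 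By $SL(n+1)$-equivariance it suffices to treat one representative in each isotypic family, giving $I(\mathbb{S}_{n+1})\subseteq\ker$ and hence a surjection $R(\mathbb{S}_{n+1})\twoheadrightarrow\Cox(\mathcal{A}(n))$.

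To conclude I would compare Krull dimensions. The ring $R(\mathbb{S}_{n+1})$ is an integral domain — the affine cone over the irreducible, projectively normal spinor variety — of dimension $\dim\mathbb{S}_{n+1}+1=\tfrac{n(n+1)}{2}+1$. On the other hand $\Cox(\mathcal{A}(n))$ is a finitely generated integral domain (indeed factorial, being the Cox ring of a variety with free divisor class group), and by Remark \ref{dimCox} $\dim\Cox(\mathcal{A}(n))=\dim\mathcal{A}(n)+\rank\Pic(\mathcal{A}(n))=\big(\tbinom{n+1}{2}-1\big)+2=\tfrac{n(n+1)}{2}+1$. A surjection of finitely generated integral $K$-domains of equal Krull dimension has kernel a prime of height zero, hence zero; so the surjection of the previous step is an isomorphism, which is the assertion for $n\in\{4,5\}$.

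\emph{The step I expect to be the main obstacle} is the relation check of the second paragraph: confirming that every defining quadric of $\mathbb{S}_{n+1}$ — not only the transparent expansion relations, but also the contraction relations in the complementary bidegree — holds among the canonical sections with the correct $E_i^-$-multiplicities. The delicate point is the bookkeeping of these multiplicities on the two sides of each relation; this is precisely what Lemma \ref{mult} governs, and the grading compatibility of the first paragraph is what guarantees that the exceptional factors match up — cancelling where they are common to all monomials, and reproducing the explicit exceptional sections $S_i^-$ where they are not — so that a genuine relation among the generators survives in the domain $\Cox(\mathcal{A}(n))$.
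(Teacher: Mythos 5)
Your proposal is correct and follows essentially the same route as the paper: identify the $2^n$ Cox generators of Theorem \ref{gen} with the sub-Pfaffian (pure spinor) coordinates, show the defining quadrics of $\mathbb{S}_{n+1}$ hold among the canonical sections so that $R(\mathbb{S}_{n+1})$ surjects onto $\Cox(\mathcal{A}(n))$, and conclude by the Krull dimension count of Remark \ref{dimCox} against $\dim\mathbb{S}_{n+1}+1$, using that both rings are domains. The only difference is one of detail: the paper asserts in one line that the $10$ (resp.\ $32$-coordinate) quadrics induce relations among the generators, whereas you spell out the multiplicity bookkeeping via Lemma \ref{mult} and the cancellation of common $S_1^-$ factors, which is a welcome but not essentially different elaboration.
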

\begin{proof}
Note that by Theorem \ref{gen} $\Cox(\mathcal{A}(4))$ and $\Cox(\mathcal{A}(5))$ have respectively $16$ and $32$ generators. For simplicity of notation we will develop in full detail the case $n = 4$.

Consider the matrix $Z$ in (\ref{matrix}). The Spinor variety $\mathbb{S}_{5}$ is the closure of the image of the map
$$
\begin{array}{ccc}
\bigwedge^2V& \rightarrow & \mathbb{P}^{15}\\
 Z & \longmapsto & (1,z_{0,1},\dots,z_{3,4},\pf(Z_0),\dots,\pf(Z_4))
\end{array}
$$
where $Z_i$ is the minor obtained by deleting the $i$-th row and column of $Z$. When we take the closure we add the missing variable $S_1$, and the $10$ quadrics cutting out $\mathbb{S}_{5}\subset\mathbb{P}^{15}$ induce $10$ quadratic relations among the generators of $\Cox(\mathcal{A}(4))$. Now, Remark \ref{dimCox} yields $\dim \Cox(\mathcal{A}(4)) = 9 + 2 = 11$, and to conclude it is enough to observe that $\mathbb{S}_{5}\subset\mathbb{P}^{15}$ is a $10$-dimensional irreducible and reduced variety.

In the case $n = 5$ we may argue in a completely analogous way observing that $\dim\Cox(\mathcal{A}(5) = 14 + 2 = 16 = \dim(\mathbb{S}_6)+1$. 
\end{proof}

\section{On the Mori chamber and stable base locus decompositions}\label{sec4}
In this section we will study the Mori chamber and stable base locus decompositions for spaces of complete skew-forms. The stable base locus of an effective $\mathbb{Q}$-divisor on a normal $\mathbb{Q}$-factorial projective variety $X$ has been defined in (\ref{sbl}). Since stable base loci do not behave well with respect to numerical equivalence \cite[Example 10.3.3]{La04II}, we will assume that $h^{1}(X,\mathcal{O}_X)=0$ so that linear and numerical equivalence of $\mathbb{Q}$-divisors coincide. Then numerically equivalent $\mathbb{Q}$-divisors on $X$ have the same stable base locus, and the pseudo-effective cone $\overline{\Eff}(X)$ of $X$ can be decomposed into chambers depending on the stable base locus of the corresponding linear series. This decomposition
is called \textit{stable base locus decomposition}. This means that $\overline{\Eff}(X$ can be decomposed into regions given by unions of possibly non convex cones such that $D_1,D_2\in\overline{\Eff}(X$ belongs to the same region if and only if $\textbf{B}(D_1) = \textbf{B}(D_2)$, see \cite[Section 4.1.3]{CdFG17} for further details. Note that if $X$ is a Mori dream space then $h^1(X,\mathcal{O}_X)=0$.

Recall that two divisors $D_1,D_2$ on a Mori dream space belong to the same Mori chamber if and only if $\textbf{B}(D_1) = \textbf{B}(D_2)$ and the following diagram of rational maps is commutative
   \[
  \begin{tikzpicture}[xscale=1.5,yscale=-1.2]
    \node (A0_1) at (1, 0) {$X$};
    \node (A1_0) at (0, 1) {$X(D_1)$};
    \node (A1_2) at (2, 1) {$X(D_2)$};
    \path (A1_0) edge [->]node [auto] {$\scriptstyle{}$} node [rotate=180,sloped] {$\scriptstyle{\widetilde{\ \ \ }}$} (A1_2);
    \path (A0_1) edge [->,dashed]node [auto] {$\scriptstyle{\phi_{D_2}}$} (A1_2);
    \path (A0_1) edge [->,swap, dashed]node [auto] {$\scriptstyle{\phi_{D_1}}$} (A1_0);
  \end{tikzpicture}
  \]
where the horizontal arrow is an isomorphism. Therefore, the Mori chamber decomposition is a refinement of the stable base locus decomposition.  

\begin{Remark}\label{toric}
Recall that by \cite[Proposition 2.11]{HK00} given a Mori Dream Space $X$ there is an embedding $i:X\rightarrow \mathcal{T}_X$ into a simplicial projective toric variety $\mathcal{T}_X$ such that $i^{*}:\Pic(\mathcal{T}_X)\rightarrow \Pic(X)$ is an isomorphism inducing an isomorphism $\Eff(\mathcal{T}_X)\rightarrow \Eff(X)$. Furthermore, the Mori chamber decomposition of $\Eff(\mathcal{T}_X)$ is a refinement of the Mori chamber decomposition of $\Eff(X)$. Indeed, if $\Cox(X) \cong \frac{K[T_1,\dots,T_s]}{I}$ where the $T_i$ are homogeneous generators with non-trivial effective $\Pic(X)$-degrees then $\Cox(\mathcal{T}_X)\cong K[T_1,\dots,T_s]$.
\end{Remark}

\begin{Notation}
We will denote by $\left\langle v_1,\dots,v_s\right\rangle$ the cone in $\mathbb{R}^n$ generated by the vectors $v_1,\dots,v_s\in\mathbb{R}^n$. Given two vectors $v_i,v_j$ we set $(v_i,v_j] := \left\langle v_i,v_j\right\rangle\setminus \mathbb{R}_{\geq 0}v_i$ and $(v_i,v_j) := \left\langle v_i,v_j\right\rangle\setminus\{\mathbb{R}_{\geq 0}v_i\cup\mathbb{R}_{\geq 0}v_j\}$.
\end{Notation}

The space $\mathcal{A}(1)$ is a point, and $\mathcal{A}(2)$, $\mathcal{A}(3)$ have Picard rank one, so there is nothing to say on their Mori chamber decomposition. Those of Picard rank two are $\mathcal{A}(4)$ and $\mathcal{A}(5)$. Note that as an immediate consequence of Theorem \ref{theff} we have that their Mori chamber decomposition coincides with their stable base locus decomposition. The stable base locus decomposition of $\Eff(\mathcal{A}(5))$ is given by the regions $[E_1^{-},D_2^{-}), [D_2^{-},D_4^{-}], (D_4^{-},D_6^{-}]$, while that of $\Eff(\mathcal{A}(4))$ is obtained from the decomposition of $\Eff(\mathcal{A}(5))$ by removing the ray $D_6^{-}\sim 3H^{-}-2E_1^{-}$. Therefore, the first interesting varieties are $\mathcal{A}(6)$ and $\mathcal{A}(7)$. We begin our investigation by first studying these decompositions for the first space $\mathcal{A}(n)_1$ appearing in Construction \ref{css}. 

\begin{Proposition}\label{firstbu}
Let $\mathcal{A}(n)_1$ be the blow-up of $\mathbb{P}^{N_{-}}$ along the Grassmannian $\mathcal{G}(1,n)\subset\mathbb{P}^{N_{-}}$ in Construction \ref{css}. As usual we write $\Pic(\mathcal{A}(n)_1) = \mathbb{Z}[H^{-},E_1^{-}]$ where $H^{-}$ is the pull-back of the hyperplane section of $\mathbb{P}^{N_{-}}$, and $E_1^{-}$ is the exceptional divisor.

Then the Mori chamber and the stable base locus decompositions of $\Eff(\mathcal{A}(n)_1)$ coincide and are represented, when $n$ is odd and even respectively, by the following pictures
$$
\begin{tikzpicture}[xscale=1.8,yscale=1.3][line cap=round,line join=round,>=triangle 45,x=1cm,y=1cm]\clip(-0.20150176678445222,-1.6) rectangle (2.6,1.1763586956521748);\draw [->,line width=0.4pt] (0,0) -- (1.3400182225885815,0);\draw [->,line width=0.4pt] (0,0) -- (0,1);\draw [->,line width=0.4pt] (0,0) -- (1.2849546368530922,-0.5152151776936973);\draw [->,line width=0.4pt] (0,0) -- (1.0853491385619454,-0.7905331063711422);\draw [->,line width=0.4pt] (0,0) -- (0.7412017277151399,-1.1071487243502038);\draw [->,line width=0.4pt] (0,0) -- (0.3557566275667178,-1.2516906369058625);\draw [shift={(0,0)},line width=0.4pt,dotted]  plot[domain=5.302329138917821:5.653673312285958,variable=\t]({1*0.9849158007199498*cos(\t r)+0*0.9849158007199498*sin(\t r)},{0*0.9849158007199498*cos(\t r)+1*0.9849158007199498*sin(\t r)});\draw [shift={(0,0)},line width=0.4pt,color=yqyqyq,fill=yqyqyq,fill opacity=0.66]  (0,0) --  plot[domain=-0.38133353271222603:0,variable=\t]({1*1.0480259109671113*cos(\t r)+0*1.0480259109671113*sin(\t r)},{0*1.0480259109671113*cos(\t r)+1*1.0480259109671113*sin(\t r)}) -- cycle ;\begin{scriptsize}\draw [fill=uuuuuu] (0,0) circle (0.0pt);\draw [fill=black] (1.3400182225885815,0) circle (0.0pt);\draw[color=black] (1.8,0.04769021739130457) node {$D_2^{-}\sim H^{-}$};\draw [fill=black] (0,1) circle (0.0pt);\draw[color=black] (0.011749116607773832,1.1) node {$E_1^{-}$};\draw [fill=black] (1.2849546368530922,-0.5152151776936973) circle (0.0pt);\draw[color=black] (1.95,-0.4680706521739131) node {$D_4^{-}\sim 2H^{-}-E_1^{-}$};\draw [fill=black] (1.0853491385619454,-0.7905331063711422) circle (0.0pt);\draw[color=black] (1.8,-0.7436141304347827) node {$D_6^{-}\sim 3H^{-}-2E_1^{-}$};\draw [fill=black] (0.7412017277151399,-1.1071487243502038) circle (0.0pt);\draw[color=black] (1.85,-1.0580163043478263) node {$D_{n-1}^{-}\sim \frac{n-1}{2}H^{-}-\frac{n-3}{2}E_1^{-}$};\draw [fill=black] (0.3557566275667178,-1.2516906369058625) circle (0.0pt);\draw[color=black] (1.4,-1.3) node {$D_{n+1}^{-}\sim \frac{n+1}{2}H^{-}-\frac{n-1}{2}E_1^{-}$};\end{scriptsize}\end{tikzpicture}\quad
\begin{tikzpicture}[xscale=1.8,yscale=1.3][line cap=round,line join=round,>=triangle 45,x=1cm,y=1cm]\clip(-0.20150176678445222,-1.6) rectangle (2.6,1.1763586956521748);\draw [->,line width=0.4pt] (0,0) -- (1.3400182225885815,0);\draw [->,line width=0.4pt] (0,0) -- (0,1);\draw [->,line width=0.4pt] (0,0) -- (1.2849546368530922,-0.5152151776936973);\draw [->,line width=0.4pt] (0,0) -- (1.0853491385619454,-0.7905331063711422);\draw [->,line width=0.4pt] (0,0) -- (0.7412017277151399,-1.1071487243502038);\draw [->,line width=0.4pt] (0,0) -- (0.3557566275667178,-1.2516906369058625);\draw [shift={(0,0)},line width=0.4pt,dotted]  plot[domain=5.302329138917821:5.653673312285958,variable=\t]({1*0.9849158007199498*cos(\t r)+0*0.9849158007199498*sin(\t r)},{0*0.9849158007199498*cos(\t r)+1*0.9849158007199498*sin(\t r)});\draw [shift={(0,0)},line width=0.4pt,color=yqyqyq,fill=yqyqyq,fill opacity=0.66]  (0,0) --  plot[domain=-0.38133353271222603:0,variable=\t]({1*1.0480259109671113*cos(\t r)+0*1.0480259109671113*sin(\t r)},{0*1.0480259109671113*cos(\t r)+1*1.0480259109671113*sin(\t r)}) -- cycle ;\begin{scriptsize}\draw [fill=uuuuuu] (0,0) circle (0.0pt);\draw [fill=black] (1.4,0) circle (0.0pt);\draw[color=black] (1.8,0.04769021739130457) node {$D_2^{-}\sim H^{-}$};\draw [fill=black] (0,1) circle (0.0pt);\draw[color=black] (0.011749116607773832,1.1) node {$E_1^{-}$};\draw [fill=black] (1.2849546368530922,-0.5152151776936973) circle (0.0pt);\draw[color=black] (1.95,-0.4680706521739131) node {$D_4^{-}\sim 2H^{-}-E_1^{-}$};\draw [fill=black] (1.0853491385619454,-0.7905331063711422) circle (0.0pt);\draw[color=black] (1.8,-0.7436141304347827) node {$D_6^{-}\sim 3H^{-}-2E_1^{-}$};\draw [fill=black] (0.7412017277151399,-1.1071487243502038) circle (0.0pt);\draw[color=black] (1.85,-1.0580163043478263) node {$D_{n-2}^{-}\sim \frac{n-2}{2}H^{-}-\frac{n-4}{2}E_1^{-}$};\draw [fill=black] (0.3557566275667178,-1.2516906369058625) circle (0.0pt);\draw[color=black] (1.2,-1.3) node {$D_{n}^{-}\sim \frac{n}{2}H^{-}-\frac{n-2}{2}E_1^{-}$};\end{scriptsize}\end{tikzpicture}
$$
where $\Mov(\mathcal{A}(n)_1) = \left\langle D_2^{-},D_{n-1}^{-}\right\rangle$ if $n$ is odd, while $\Mov(\mathcal{A}(n)_1) = \left\langle D_2^{-},D_{n}^{-}\right\rangle$ is $n$ is even. 
\end{Proposition}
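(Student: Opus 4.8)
The plan is to treat $\mathcal{A}(n)_1$ as a Picard rank two Mori dream space. It is $SL(n+1)$-spherical, being the blow-up of $\mathbb{P}^{N_{-}}$ along the invariant subvariety $\mathcal{G}(1,n)$, so it is a Mori dream space by Remark \ref{sphMDS}, and its Mori chamber decomposition is defined. Since $N^1(\mathcal{A}(n)_1)$ is a plane, the whole picture is determined once we locate the two boundary rays of $\Eff(\mathcal{A}(n)_1)$, the nef cone, and the stable base locus of every class in between. I would obtain the effective cone by pushing forward Theorem \ref{theff} along the birational morphism $\pi\colon \mathcal{A}(n)\to\mathcal{A}(n)_1$ that undoes the blow-ups $f_2,f_3,\dots$ of Construction \ref{css}. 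The later exceptional divisors contract to the higher secant varieties $\sec_i(\mathcal{G}(1,n))$, $i\ge 2$, which have codimension $\ge 2$ in $\mathcal{A}(n)_1$, so $\pi_{*}$ kills them while fixing $E_1^{-}$ and the strict transform of the maximal secant hypersurface (the Pfaffian divisor $D_{n+1}^{-}$ when $n$ is odd, the $n\times n$ sub-Pfaffian locus $D_n^{-}$ when $n$ is even). As $\pi_{*}$ is surjective onto $\Eff(\mathcal{A}(n)_1)$, this yields $\Eff(\mathcal{A}(n)_1)=\langle E_1^{-},D_{n+1}^{-}\rangle$ (resp. $\langle E_1^{-},D_n^{-}\rangle$), with $E_1^{-}$ rigid as an exceptional divisor.

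Next I would compute the nef cone through its dual, the Mori cone. The cone $\NEbar(\mathcal{A}(n)_1)$ is two-dimensional, spanned by the class $e_1^{-}$ of a line in a fibre of $E_1^{-}$ (contracted by $f_1$) and by the class $l^{-}-2e_1^{-}$ of the strict transform of a secant line to $\mathcal{G}(1,n)$; no lower class occurs, because a line meeting $\mathcal{G}(1,n)$ in three or more points must lie on $\mathcal{G}(1,n)$, the ideal of the latter being generated by quadrics. Dualising gives $\Nef(\mathcal{A}(n)_1)=\langle D_2^{-},D_4^{-}\rangle$, the shaded chamber, where $D_4^{-}=2H^{-}-E_1^{-}$ is exactly the class annihilating the secant-line class and hence defining the contraction of the (strict transform of the) secant variety $\sec_2(\mathcal{G}(1,n))$.

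The core of the argument is the base locus computation for the remaining classes, organised along the filtration $\mathcal{G}(1,n)=\sec_1\subset\sec_2\subset\cdots$. For $D_{2k+2}^{-}\sim(k+1)H^{-}-kE_1^{-}$ the sections are the degree $k+1$ forms of multiplicity $\ge k$ along $\mathcal{G}(1,n)$; these are spanned by the $(2k+2)\times(2k+2)$ sub-Pfaffians, which lie in the system by Lemma \ref{mult} and span it by the $SL(n+1)$-module analysis underlying Theorem \ref{gen}, so $\mathbf{B}(D_{2k+2}^{-})$ is the strict transform of $\sec_k(\mathcal{G}(1,n))$. Crossing the wall $D_{2k+2}^{-}$ is a flip along $\sec_{k+1}(\mathcal{G}(1,n))$, which has codimension $\ge 2$ by Remark \ref{dimsec}, and I would show inductively that the stable base locus on the half-open chamber $(D_{2j}^{-},D_{2j+2}^{-}]$ is the strict transform of $\sec_j(\mathcal{G}(1,n))$, while it is empty on the nef chamber and equals $E_1^{-}$ on $[E_1^{-},D_2^{-})$. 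The procedure stops once the secant variety becomes a divisor: for $n$ odd, $\sec_{(n-1)/2}(\mathcal{G}(1,n))=\{\pf=0\}=D_{n+1}^{-}$, so classes in $(D_{n-1}^{-},D_{n+1}^{-}]$ acquire $D_{n+1}^{-}$ as a fixed divisorial component and are not movable, giving $\Mov(\mathcal{A}(n)_1)=\langle D_2^{-},D_{n-1}^{-}\rangle$; for $n$ even the last sub-Pfaffian locus $\sec_{(n-2)/2}(\mathcal{G}(1,n))$ still has codimension $\ge 2$, so $D_n^{-}$ stays movable and bounds both $\Mov$ and $\Eff$. Finally, since $\sec_1\subsetneq\sec_2\subsetneq\cdots$ are pairwise distinct, the stable base locus genuinely changes across every wall $D_{2k+2}^{-}$; as the Mori chamber decomposition always refines the stable base locus decomposition, the two coincide.

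I expect the main obstacle to be the second sentence of the third paragraph: controlling $H^0(\mathcal{A}(n)_1,D_{2k+2}^{-})$ precisely enough to force $\mathbf{B}(D_{2k+2}^{-})$ to be \emph{exactly} the strict transform of $\sec_k(\mathcal{G}(1,n))$ rather than something strictly smaller, i.e. ruling out sections not arising from sub-Pfaffians, together with the bookkeeping of the flipped loci so that each movable chamber carries $\sec_j$ as stable base locus and not merely a subvariety of it. The effective- and nef-cone steps are comparatively routine once Theorem \ref{theff} and the curve classes are in hand.
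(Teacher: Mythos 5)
Your computations of $\Eff(\mathcal{A}(n)_1)$ (pushing Theorem \ref{theff} forward along $\pi\colon\mathcal{A}(n)\to\mathcal{A}(n)_1$, which kills the classes $E_i^{-}$, $i\geq 2$, since they contract to loci of codimension at least two) and of $\Nef(\mathcal{A}(n)_1)$ (dualising the Mori cone spanned by $e_1^{-}$ and the secant-line class $l^{-}-2e_1^{-}$) are essentially correct, and they take a more elementary route than the paper, which instead computes the colors of the spherical variety $\mathcal{A}(n)_1$ arguing as in Proposition \ref{p1}; just note that your justification of the Mori cone only discusses \emph{lines}, while you must bound all irreducible curves $C_0\not\subset\mathcal{G}(1,n)$ of degree $d$, which is done by intersecting $C_0$ with a quadric through $\mathcal{G}(1,n)$ not containing it, giving the class $dl^{-}-me_1^{-}$ with $m\leq 2d$. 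The genuine gap is exactly the step you flag as the ``main obstacle,'' and the remedy you propose for it is misdirected. Identifying the stable base loci requires two containments, and neither of them needs any control of $H^0(\mathcal{A}(n)_1,D_{2k+2}^{-})$: for the upper bound, extra sections beyond the sub-Pfaffians could only \emph{shrink} the base locus, so exhibiting the sub-Pfaffian sections (Lemma \ref{mult}) together with the monotonicity $\textbf{B}(D_2^{-}+b_1E_1^{-})\subseteq\textbf{B}(D_2^{-}+b_2E_1^{-})$ for $b_2\leq b_1\leq 0$ already suffices; what genuinely needs an argument is the lower bound, namely why $\widetilde{\sec_h(\mathcal{G}(1,n))}$ is \emph{contained} in $\textbf{B}(D)$ for every $D$ in the half-open chamber $(D_{2h}^{-},D_{2h+2}^{-}]$, and not just some proper subvariety of it. No amount of section bookkeeping gives this, and ``show inductively across the flips'' is not a mechanism.

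The missing mechanism is a covering-curve negativity argument, which is the direct generalisation of the secant lines you already used for the Mori cone, and it is how the paper argues. Through a general point $p$ of $\sec_h(\mathcal{G}(1,n))$ there is a rational normal curve $C$ of degree $h-1$ contained in the span of $h$ general points of $\mathcal{G}(1,n)$ (hence contained in $\sec_h(\mathcal{G}(1,n))$) passing through $p$ and meeting $\mathcal{G}(1,n)$ in those $h$ points; its strict transform has class $\widetilde{C}\equiv (h-1)l^{-}-he_1^{-}$, and curves of this class cover $\widetilde{\sec_h(\mathcal{G}(1,n))}$. Writing $D\sim D_2^{-}+bE_1^{-}$ with $-\frac{h}{h+1}\leq b<-\frac{h-1}{h}$, one gets $D\cdot\widetilde{C}=h-1+bh<0$, so every such curve lies in $\textbf{B}(D)$ and therefore $\widetilde{\sec_h(\mathcal{G}(1,n))}\subseteq\textbf{B}(D)$; the same device with the fibre class $e_1^{-}$, whose deformations cover $E_1^{-}$, gives $\textbf{B}(D)=E_1^{-}$ on $[E_1^{-},D_2^{-})$. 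With these lower bounds in place, your concluding logic (the stable base locus changes at every wall because the secant varieties are pairwise distinct, and the Mori chamber decomposition refines the stable base locus decomposition, so the two coincide) goes through as stated.
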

\begin{proof}
Note that $\mathcal{A}(n)_1$ is spherical even though it is not wonderful. Arguing as in the proof of Proposition \ref{p1} we can compute the colors of $\mathcal{A}(n)_1$ which by Remark \ref{toric} are the walls of the Mori chamber decomposition.

Recall that by \cite[Section 10]{LO13} the ideal of $\sec_h(\mathcal{G}(1,n))$ is generated by the $(2h+2)\times (2h+2)$ sub-Pfaffians of the matrix $Z$ in (\ref{matrix}), and $D_{2h+2}^{-}$ is the strict transform of the hypersurface defined by such a sub-Pfaffian. By Theorem \ref{theff} the linear system of $D_{2h+2}^{-}$ becomes base-point-free when we blow-up the strict transform of $\sec_h(\mathcal{G}(1,n))$ in Construction \ref{css}, so $\textbf{B}(D_{2h+2}^{-}) = \widetilde{\sec_h(\mathcal{G}(1,n))}$, where $\widetilde{\sec_h(\mathcal{G}(1,n))}$ denotes the strict transform of $\sec_h(\mathcal{G}(1,n))\subset\mathbb{P}^{N_{-}}$ in $\mathcal{A}(n)_1$.   

Note that $\textbf{B}(D) = \emptyset$ for any $D\in [D_2^{-},D_4^{-}]$. Now, if $D$ is a $\mathbb{Q}$-divisor in $[E_1^{-},D_2^{-})$ then $\textbf{B}(D)\subset E_1^{-}$. Furthermore, if $e$ is a curve generating the extremal ray of $\NE(\mathcal{A}(n)_1)$ corresponding to the blow-down $\mathcal{A}(n)_1\rightarrow\mathbb{P}^{N_{-}}$ we have $D\cdot e < 0$, and since the curves of class $e$ cover $E_1^{-}$ we get that $E_1^{-}\subset \textbf{B}(D)$. Therefore, $\textbf{B}(D) = E_1^{-}$ for any $D\in [E_1^{-},D_2^{-})$.     

Let $D_{b_1} = D_2^{-} + b_1 E_1^{-}$, $D_{b_2} = D_2^{-} + b_2E_1^{-}$ be effective $\mathbb{Q}$-divisors in $\mathcal{A}(n)_1$ such that $b_2\leq b_1\leq 0$. Note that we can write $D_{b_2} = D_{b_1}+(b_2-b_1)E_1^{-}$, with $b_2-b_1 \leq 0$. Therefore $\textbf{B}(D_{b_1})\subset \textbf{B}(D_{b_2})$. Now, consider $D\in (D_{2h}^{-},D_{2h+2}^{-}]$, hence $D\sim D_{2}^{-}+bE_1^{-}$ with $-\frac{h}{h+1} \leq b <-\frac{h-1}{h}$. Note that if $p\in \sec_h(\mathcal{G}(1,n))$ is a general point then there is a degree $h-1$ rational normal curve $C$ in $\sec_h(\mathcal{G}(1,n))$ passing through $p$ and intersecting $\sec_h(\mathcal{G}(1,n))$ in $h$ points. Hence, if $\widetilde{C}$ is the strict transform of $C$ in $\mathcal{A}(n)_1$ then the curves in $\mathcal{A}(n)_1$ of class $\widetilde{C}$ cover $\widetilde{\sec_h(\mathcal{G}(1,n))}$. Furthermore, $\widetilde{C}\sim (h-1)l-he$, where $l$ denotes the strict transform of a general line in $\mathbb{P}^{N_{-}}$, and $D\cdot \widetilde{C} = h-1+bh <0$. Therefore, $\widetilde{\sec_h(\mathcal{G}(1,n))}\subseteq\textbf{B}(D)$, and hence $\textbf{B}(D) = \widetilde{\sec_h(\mathcal{G}(1,n))}$ for any divisor $D\in (D_{2h},D_{2h+2}]$.
\end{proof}

Thanks to Proposition \ref{firstbu} we immediately see that two different types of Sarkisov links naturally arise in our setting. For the basics on Sarkisov links we refer to \cite{Co95}. 

\begin{Proposition}\label{Sarkisov}
In the notation of Proposition \ref{firstbu} let $\mathcal{A}(n)_1^{h}$ be the model of $\mathcal{A}(n)_1$ corresponding to a divisor $D\in (D_{2h},D_{2h+2})$. If $n$ is even then the rational map $\mathbb{P}^{N_{-}}\dasharrow \mathbb{P}^n$ given by principal sub-Pfaffians of order $n$ of a general $(n+1)\times (n+1)$ skew-symmetric matrix gives rise to a Sarkisov link of type I:
 \[
  \begin{tikzpicture}[xscale=1.9,yscale=-1.1]
    \node (A0_0) at (0, 0) {};
    \node (A0_1) at (1, 0) {$\mathcal{A}(n)_1$};
    \node (A0_2) at (2, 0) {$\mathcal{A}(n)_1^{\frac{n-2}{2}}$};
    \node (A1_0) at (0, 1) {$\mathbb{P}^{N_{-}}$};
    \node (A1_2) at (2, 1) {$\mathbb{P}^n$};
    \path (A1_0) edge [->,dashed]node [auto] {$\scriptstyle{}$} (A1_2);
    \path (A0_1) edge [->]node [auto] {$\scriptstyle{}$} (A1_0);
    \path (A0_1) edge [->,dashed]node [auto] {$\scriptstyle{}$} (A0_2);
    \path (A0_2) edge [->]node [auto] {$\scriptstyle{}$} (A1_2);
    \end{tikzpicture}
  \]
If $n$ is odd then the birational involution $i:\mathbb{P}^{N_{-}}\dasharrow \mathbb{P}^{N_{-}}$ given by mapping an invertible skew-symmetric matrix $Z$ to its inverse gives rise to a Sarkisov link of type II:
\[
  \begin{tikzpicture}[xscale=1.9,yscale=-1.1]
    \node (A0_1) at (1, 0) {$\mathcal{A}(n)_1$};
    \node (A0_2) at (2, 0) {$\mathcal{A}(n)_1^{\frac{n-1}{2}}$};
    \node (A1_0) at (0, 1) {$\mathbb{P}^{N_{-}}$};
    \node (A1_3) at (3, 1) {$\mathbb{P}^{N_{-}}$};
    \path (A0_1) edge [->]node [auto] {$\scriptstyle{}$} (A1_0);
    \path (A1_0) edge [->,dashed]node [auto] {$\scriptstyle{}$} (A1_3);
    \path (A0_2) edge [->]node [auto] {$\scriptstyle{}$} (A1_3);
    \path (A0_1) edge [->,dashed]node [auto] {$\scriptstyle{}$} (A0_2);
  \end{tikzpicture}
  \]   
Furthermore, $A(n)_1^{\frac{n-1}{2}}\cong \mathcal{A}(n)_1$ and the birational involution $i:\mathbb{P}^{N_{-}}\dasharrow \mathbb{P}^{N_{-}}$ lifts to an automorphism $Z^{inv}:\mathcal{A}(n)\rightarrow \mathcal{A}(n)$.
\end{Proposition}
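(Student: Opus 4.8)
The plan is to analyze the two birational maps explicitly through the lens of Thaddeus's blow-up construction and the chamber structure already established in Proposition \ref{firstbu}. For the odd case, the key observation is that the birational involution $i$ on $\mathbb{P}^{N_{-}}$ sending a skew-symmetric matrix $Z$ to its inverse $Z^{-1}$ is defined precisely on the complement of the last secant variety $\sec_{\frac{n-1}{2}}(\mathcal{G}(1,n))$, which is the locus where $\pf(Z)=0$, i.e.\ where $Z$ is non-invertible. Since the entries of $Z^{-1}$ are (up to the common factor $\pf(Z)$) the principal $(n-1)\times(n-1)$ sub-Pfaffians of $Z$, the map $i$ is given by a linear system of divisors of the form $D_{n-1}^{-}$, which by Proposition \ref{firstbu} is exactly the extremal ray of $\Mov(\mathcal{A}(n)_1)$ opposite to $D_2^{-}$. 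First I would identify the indeterminacy locus of $i$ with $\sec_{\frac{n-1}{2}-1}(\mathcal{G}(1,n))$, the locus where all these sub-Pfaffians vanish simultaneously (equivalently $\rk(Z)\leq n-3$), and verify that resolving this map is exactly the blow-up $\mathcal{A}(n)_1$ followed by the flip-type modification crossing the wall at $D_{n-1}^{-}$.

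For the type II structure I would then check the defining features of a Sarkisov link: both $\mathcal{A}(n)_1 \to \mathbb{P}^{N_{-}}$ (source) and $\mathcal{A}(n)_1^{\frac{n-1}{2}} \to \mathbb{P}^{N_{-}}$ (target) are divisorial contractions of relative Picard rank one blowing up a single center, with a small modification (an isomorphism in codimension one, crossing the interior walls $D_{2h}^{-}$) in between; this is precisely the shape of a type II link, and the chamber picture of Proposition \ref{firstbu} supplies the required intermediate models $\mathcal{A}(n)_1^h$. The isomorphism $\mathcal{A}(n)_1^{\frac{n-1}{2}}\cong\mathcal{A}(n)_1$ follows because $i$ is an involution: applying $i$ twice is the identity on $\mathbb{P}^{N_{-}}$, so the two extremal contractions of the composed resolution must be symmetric, forcing the target model to be isomorphic to the source. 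For the even case the analysis is similar but the target map $\mathbb{P}^{N_{-}}\dasharrow\mathbb{P}^n$ sends $Z$ to its vector of principal sub-Pfaffians of order $n$ (the $n+1$ of them, obtained by deleting one index), which lands in $\mathbb{P}^n$ rather than back in $\mathbb{P}^{N_{-}}$; here the target of the link is a lower-dimensional projective space and the exceptional divisor of the target contraction dominates $\mathbb{P}^n$ with positive-dimensional fibers, giving a type I link by the standard classification.

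The final claim, that $i$ lifts to an honest automorphism $Z^{\mathrm{inv}}$ of the full space $\mathcal{A}(n)$, I would deduce from the modular interpretation of $\mathcal{A}(n)$ as the space of complete skew-forms. The point is that inversion of skew-symmetric forms extends naturally to the sequence of induced forms $(\wedge^2 Z,\dots,\wedge^n Z)$ appearing in the graph closure (\ref{closure}): passing to the inverse permutes the secant strata $\sec_h(\mathcal{G}(1,n))$ according to $h\mapsto \frac{n-1}{2}-h$ (the kernel of $Z$ of dimension $2m$ corresponds to the cokernel of $Z^{-1}$), so the $SL(n+1)$-equivariant wonderful compactification is preserved and the boundary/color structure of Proposition \ref{p1} is merely relabeled. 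Concretely, $i$ permutes the exceptional divisors $E_i^{-}$ and hence acts on $\Pic(\mathcal{A}(n))$, and because $\mathcal{A}(n)$ is the closure of the graph of a map that is $i$-covariant, the birational involution extends regularly across all of $\mathcal{A}(n)$.

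The main obstacle I expect is the last step: verifying that $i$ genuinely extends to a \emph{morphism} (not merely a pseudo-automorphism) on all of $\mathcal{A}(n)$. The subtlety is that $i$ on $\mathbb{P}^{N_{-}}$ contracts and flips various secant strata, and one must confirm that all these indeterminacies are simultaneously resolved by the single wonderful model $\mathcal{A}(n)$ and that no new base locus is introduced. The cleanest route is to use the functoriality of Thaddeus's construction under the automorphism $Z\mapsto Z^{-1}$: since inversion preserves the flag of secant varieties (up to reversing the order), the iterated blow-up is canonically isomorphic to its pullback under $i$, so $i$ lifts to an isomorphism of $\mathcal{A}(n)$ with itself. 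I would expect this to require checking that inversion maps the strict transform of each $\sec_h$ isomorphically onto the strict transform of the complementary stratum, which is the genuinely geometric content and the place where the skew-symmetric Pfaffian structure (as opposed to the symmetric or general linear case) must be used carefully.
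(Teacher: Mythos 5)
Your treatment of the two links themselves follows the paper's route: the paper likewise reads off the Sarkisov structure directly from the chamber decomposition of Proposition \ref{firstbu}, and identifies the even-case map as the dominant map given by the order-$n$ principal sub-Pfaffians, lifting to the regular fibration induced by $D_n^{-}$; your explicit description of $i$ via the order-$(n-1)$ sub-Pfaffians, with indeterminacy locus $\sec_{\frac{n-3}{2}}(\mathcal{G}(1,n))$, is a correct and more detailed version of this. One local correction: the wall at $D_{n-1}^{-}$ is not crossed by a ``flip-type modification''; it is the wall at which the second \emph{divisorial} contraction of the type II link occurs (contracting the strict transform of the Pfaffian hypersurface onto $\mathcal{G}(1,n)$), while the small modifications occur at the interior walls $D_4^{-},\dots,D_{n-3}^{-}$ of $\Mov(\mathcal{A}(n)_1)$. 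Relatedly, the Sarkisov factorization does not \emph{resolve} $i$: the map $i$ becomes a morphism only after blowing up all the secant varieties, i.e.\ on $\mathcal{A}(n)$ itself, not on $\mathcal{A}(n)_1$ or any of its small modifications.

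The genuine problem is in your last paragraph. The argument you call ``the cleanest route'' --- functoriality of Thaddeus's construction under $Z\mapsto Z^{-1}$, using that ``inversion preserves the flag of secant varieties up to reversing the order'' --- does not make sense as stated: $i$ is a birational map that is undefined along $\sec_{\frac{n-3}{2}}(\mathcal{G}(1,n))$ and contracts the Pfaffian hypersurface, so it is undefined along, or contracts, every blow-up center of the construction; it does not act on the flag of secant varieties, and ``the pullback of the iterated blow-up under $i$'' is not a defined operation for a rational map whose indeterminacy contains the centers. Likewise, asking whether ``inversion maps the strict transform of each $\sec_h$ isomorphically onto the strict transform of the complementary stratum'' presupposes exactly the regular extension you are trying to construct. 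The irony is that the argument you state earlier and then set aside as insufficient is already a complete proof, and is precisely the paper's: for invertible $Z$ one has $\wedge^k(Z^{-1})$ proportional to $\wedge^{n+1-k}Z$ under the canonical duality $\bigwedge^kV\cong\bigwedge^{n+1-k}V^{*}$, so the linear automorphism of the ambient product $\mathbb{P}(\bigwedge^2V)\times\dots\times\mathbb{P}(\bigwedge^2\bigwedge^nV)$ switching the factors $\mathbb{P}(\bigwedge^2\bigwedge^kV)$ and $\mathbb{P}(\bigwedge^2\bigwedge^{n+1-k}V)$ intertwines the graph map (\ref{closure}) with its composition with $i$. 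Being an automorphism of the ambient space, it preserves the graph closure $\mathcal{A}(n)$ and restricts there to a regular automorphism $Z^{inv}$ lifting $i$; no question of ``simultaneously resolving indeterminacies'' or of new base loci ever arises. You should promote that observation from heuristic to proof and discard the alternative route.
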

\begin{proof}
First, note that if $n$ is even the rational map $\mathbb{P}^{N_{-}}\dasharrow \mathbb{P}^n$ given by the $n+1$ principal sub-Pfaffians of order $n$ of a general $(n+1)\times (n+1)$ skew-symmetric matrix is dominant, and lifts to the regular fibration $\mathcal{A}(n)_1^{\frac{n-2}{2}}\rightarrow\mathbb{P}^n$ induced by $D_n^{-}$.  

The description of the maps as Sarkisov links follows from the computation of the Mori chamber decomposition of $\Eff(\mathcal{A}(n)_1)$ in Proposition \ref{firstbu}. For the second part of the statement it is enough to note that $Z^{inv}:\mathcal{A}(n)\rightarrow \mathcal{A}(n)$ is the automorphism switching $\mathbb{P}(\bigwedge^2\bigwedge^{k}V)$ and $\mathbb{P}(\bigwedge^2\bigwedge^{n+1-k}V)$ in (\ref{closure}).
\end{proof}

Now, we are ready to compute the Mori chamber and stable base locus decompositions for the spaces of complete skew-forms of Picard rank three.

\begin{thm}\label{MCD_main}
The Mori chamber decomposition of $\mathcal{A}(6)$ consists of five chambers described in the following $2$-dimensional section of $\Eff(\mathcal{A}(6))$
$$
\begin{tikzpicture}[xscale=0.7,yscale=0.5][line cap=round,line join=round,>=triangle 45,x=1cm,y=1cm]\clip(-7.6,-2.1) rectangle (4.7,4.5);\fill[line width=0pt,color=yqyqyq,fill=yqyqyq,fill opacity=0.56] (0,2) -- (-3.5,0) -- (3.52,0) -- cycle;\draw [line width=0.4pt] (-7,-2)-- (0,2);\draw [line width=0.4pt] (0,4)-- (-7,-2);\draw [line width=0.4pt] (-7,-2)-- (3.52,0);\draw [line width=0.4pt] (3.52,0)-- (0,4);\draw [line width=0.4pt] (0,4)-- (0,2);\draw [line width=0.4pt] (-3.5,0)-- (0,4);\draw [line width=0.4pt] (0,2)-- (3.52,0);\draw [line width=0.4pt] (-3.5,0)-- (3.52,0);\begin{scriptsize}\draw [fill=black] (-7,-2) circle (0.0pt);\draw[color=black] (-7.3,-1.7) node {$E_{1}^{-}$};\draw [fill=black] (0,2) circle (0.0pt);\draw[color=black] (0.3,2.2) node {$D_{4}^{-}$};\draw [fill=black] (-3.5,0) circle (0.0pt);\draw[color=black] (-3.8,0.3) node {$D_{2}^{-}$};\draw [fill=black] (3.5,0) circle (0.0pt);\draw[color=black] (3.9,0.2) node {$D_{6}^{-}$};\draw [fill=black] (0,4) circle (0.0pt);\draw[color=black] (0.08422939068100468,4.25) node {$E_{2}^{-}$};\end{scriptsize}\end{tikzpicture}
$$
where $\Mov(\mathcal{A}(6)) = \Nef(\mathcal{A}(6)) = \left\langle D_2^{-},D_4^{-},D_6^{-}\right\rangle$. The stable base locus decomposition of $\Eff(\mathcal{A}(6))$ consists of four chambers and is obtained by removing the wall joining $D_4^{-}$ with $E_2^{-}$ in the picture above.

The Mori chamber decomposition of $\mathcal{A}(7)$ consists of nine chambers described in the following $2$-dimensional section of $\Eff(\mathcal{A}(7))$
$$
\begin{tikzpicture}[xscale=0.4,yscale=0.7][line cap=round,line join=round,>=triangle 45,x=1cm,y=1cm]\clip(-13.7,-0.23) rectangle (14.1,6.4);\fill[line width=0pt,fill=black,fill opacity=0.3] (-5.000432432432432,2.4614054054054053) -- (5.000432432432432,2.4614054054054053) -- (0,4) -- cycle;\fill[line width=0pt,color=wwwwww,fill=wwwwww,fill opacity=0.15] (-5.000432432432432,2.4614054054054053) -- (5.000432432432432,2.4614054054054053) -- (0,1.7776389756402244) -- cycle;\draw [line width=0.4pt] (-13,0)-- (13,0);\draw [line width=0.4pt] (13,0)-- (0,6);\draw [line width=0.4pt] (0,6)-- (-13,0);\draw [line width=0.4pt] (0,4)-- (-13,0);\draw [line width=0.4pt] (0,4)-- (13,0);\draw [line width=0.4pt] (-5.000432432432432,2.4614054054054053)-- (13,0);\draw [line width=0.4pt] (5.000432432432432,2.4614054054054053)-- (-13,0);\draw [line width=0.4pt] (-5.000432432432432,2.4614054054054053)-- (5.000432432432432,2.4614054054054053);\draw [line width=0.4pt] (-5.000432432432432,2.4614054054054053)-- (0,6);\draw [line width=0.4pt] (0,6)-- (5.000432432432432,2.4614054054054053);\draw [line width=0.4pt] (0,4)-- (0,6);\begin{scriptsize}\draw [fill=black] (-13,0) circle (0pt);\draw[color=black] (-13.2,0.3) node {$E_1^{-}$};\draw [fill=black] (13,0) circle (0pt);\draw[color=black] (13.2,0.3) node {$E_3^{-}$};\draw [fill=black] (0,6) circle (0pt);\draw[color=black] (0.18536585365853658,6.2) node {$E_2^{-}$};\draw [fill=black] (0,4) circle (0pt);\draw[color=black] (0.6,4.143836565096953) node {$D_4^{-}$};\draw [fill=black] (-5.000432432432432,2.4614054054054053) circle (0pt);\draw[color=black] (-5.4,2.7) node {$D_2^{-}$};\draw [fill=black] (5.000432432432432,2.4614054054054053) circle (0pt);\draw[color=black] (5.4,2.7) node {$D_6^{-}$};\draw [fill=uuuuuu] (0,1.7776389756402244) circle (0pt);\draw[color=uuuuuu] (0.18536585365853658,1.4) node {$D_M$};\end{scriptsize}\end{tikzpicture}
$$
where $D_M \sim 6D_2^{-}-3E_1^{-}-2E_2^{-}$, and $\Mov(\mathcal{A}(7)) = \left\langle D_2^{-},D_4^{-},D_6^{-},D_M\right\rangle$. The stable base locus decomposition of $\Eff(\mathcal{A}(7))$ consists of eight chambers and is obtained by removing the wall joining $D_4^{-}$ with $E_2^{-}$ in the picture above.
\end{thm}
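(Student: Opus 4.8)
The plan is to turn both statements into finite combinatorial problems on $\Pic(\mathcal{A}(n))_{\mathbb{R}}\cong\mathbb{R}^3$, driven by the Cox ring description of Theorem \ref{gen}. Since $\mathcal{A}(n)$ is a Mori dream space (Remark \ref{sphMDS}), by Remark \ref{toric} it embeds in a simplicial projective toric variety $\mathcal{T}$ whose Cox ring is the polynomial ring on the same generators $T_I^-,S_i^-$, and the Mori chamber decomposition of $\mathcal{A}(n)$ is a coarsening of the GIT-fan of $\mathcal{T}$. That fan depends only on the $\Pic$-degrees of the generators, which Theorem \ref{theff} supplies: for $\mathcal{A}(6)$ these are $E_1^-=(0,1,0)$, $E_2^-=(0,0,1)$, $D_2^-=(1,0,0)$, $D_4^-=(2,-1,0)$, $D_6^-=(3,-2,-1)$, and for $\mathcal{A}(7)$ one adds the last exceptional divisor $E_3^-=D_8^-=(4,-3,-2)$, all in the basis $H^-,E_1^-,E_2^-$.

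First I would compute the GIT-fan of $\mathcal{T}$ as the coarsest fan refining all the codimension-one cones spanned by these weights, intersect it with $\Eff(\mathcal{A}(n))$ from Theorem \ref{theff}, and record the linear relations governing the walls (notably $D_4^-=2D_2^--E_1^-$, which forces $E_1^-,D_2^-,D_4^-$ to be coplanar). Next I would locate $\Mov(\mathcal{A}(n))$: Proposition \ref{PropMov} gives the number of its extremal rays, and applying \cite[Proposition 3.3.2.3]{ADHL15} to the generators of Theorem \ref{gen} produces them explicitly. For $\mathcal{A}(6)$ one gets $\Mov=\langle D_2^-,D_4^-,D_6^-\rangle$, which by Theorem \ref{theff} already equals $\Nef$, so there are no small modifications and $\Mov$ is a single nef chamber. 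For $\mathcal{A}(7)$ one gets the quadrilateral $\Mov=\langle D_2^-,D_4^-,D_6^-,D_M\rangle$ with the new extremal ray $D_M=6D_2^--3E_1^--2E_2^-$ arising as an intersection of facets in the description of \cite[Proposition 3.3.2.3]{ADHL15}, subdivided along $\langle D_2^-,D_6^-\rangle$ into $\Nef=\langle D_2^-,D_4^-,D_6^-\rangle$ and the nef cone $\langle D_2^-,D_6^-,D_M\rangle$ of the unique nontrivial small modification. The remaining chambers, lying in $\Eff\setminus\Mov$, are the cones $\langle g_i^*\Nef(Y_i),\Exc(g_i)\rangle$ obtained by contracting subsets of the boundary divisors $E_j^-$; each facet of $\Nef$ determines, via its dual extremal curve in Proposition \ref{Morimov}, which divisor is contracted, and counting the resulting chambers yields the claimed totals of $5$ and $9$.

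For the stable base locus decomposition I would compute $\textbf{B}(D)$ on each Mori chamber exactly as in Proposition \ref{firstbu}: a covering family of curves $C$ (among the classes of Proposition \ref{Morimov}, or the strict transforms of the rational normal curves inside the secant varieties) with $D\cdot C<0$ forces the swept-out locus into $\textbf{B}(D)$, while semiampleness on the relevant birational model bounds $\textbf{B}(D)$ from above. On the nef chamber $\textbf{B}(D)=\emptyset$; on the chamber $\langle D_2^-,D_6^-,D_M\rangle$ of $\mathcal{A}(7)$ the divisors are movable but not nef, so $\textbf{B}(D)$ equals the codimension-two flopping locus and is nonempty, which distinguishes this chamber from the nef one in the stable base locus decomposition too. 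By contrast, the two chambers meeting along the wall $\langle D_4^-,E_2^-\rangle$ both contract the boundary divisor $E_2^-$, and the resulting Picard-rank-two models differ only by a flop; hence the two chambers carry the identical divisorial stable base locus $E_2^-$ while defining non-isomorphic models of $\mathcal{A}(n)$. Thus $\langle D_4^-,E_2^-\rangle$ is the unique Mori wall invisible to the stable base locus decomposition, accounting for the passage from $5$ to $4$ and from $9$ to $8$ chambers.

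The main obstacle is precisely the analysis along $\langle D_4^-,E_2^-\rangle$: I must show simultaneously that the two adjacent chambers are genuinely distinct Mori chambers (non-isomorphic models) and that their stable base loci coincide. This requires exhibiting the flop of the contracted model and checking, through the multiplicity bookkeeping of Lemma \ref{mult} and the covering-curve classes of Proposition \ref{Morimov}, that no further component (such as a strict transform of a lower secant variety) enters or leaves $\textbf{B}(D)$ across the wall. A secondary difficulty, sharpest for $\mathcal{A}(7)$, is the explicit enumeration of all nine chambers: verifying that the coarsening of the GIT-fan of $\mathcal{T}$ produces exactly the stated wall structure, with the correct identification of $\Mov$ through $D_M$ and of the single nontrivial small modification inside it.
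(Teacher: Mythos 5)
Your overall strategy coincides with the paper's: embed $\mathcal{A}(n)$ into the toric variety of Remark \ref{toric}, compute the GKZ fan from the $\Pic$-degrees of the Cox generators of Theorem \ref{gen}, extract $\Mov(\mathcal{A}(n))$ via \cite[Proposition 3.3.2.3]{ADHL15}, and pin down stable base loci with covering curves as in Propositions \ref{Morimov} and \ref{firstbu}. The weights, the identification $E_3^{-}=D_8^{-}$, the description of $\Mov(\mathcal{A}(7))=\left\langle D_2^{-},D_4^{-},D_6^{-},D_M\right\rangle$ split along $\left\langle D_2^{-},D_6^{-}\right\rangle$, and the observation that the non-nef movable chamber of $\mathcal{A}(7)$ has non-empty stable base locus of codimension two (hence is a separate stable base locus chamber) are all correct and match the paper.

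The genuine gap is in your treatment of the wall $\left\langle D_4^{-},E_2^{-}\right\rangle$, which you yourself identify as the crux. You propose to prove that the two adjacent chambers are distinct Mori chambers by showing that they define \emph{non-isomorphic} models. For $\mathcal{A}(7)$ this is false, so the step would fail: both models are abstractly isomorphic to $\mathcal{A}(7)_1$, the blow-up of $\mathbb{P}^{27}$ along $\mathcal{G}(1,7)$ --- the second one is $\mathcal{A}(7)_1^{3}$, and by Proposition \ref{Sarkisov} the involution $Z^{inv}$ furnishes an abstract isomorphism between them. Moreover, non-isomorphism is the wrong test anyway: two divisors lie in the same Mori chamber if and only if they have the same stable base locus \emph{and} there is an isomorphism of the models commuting with the maps $\phi_{D_1},\phi_{D_2}$ from $\mathcal{A}(n)$. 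What actually distinguishes the chambers is that the induced birational map $\mathcal{A}(7)_1\dasharrow\mathcal{A}(7)_1^{3}$, i.e.\ $\phi_{D_2}\circ\phi_{D_1}^{-1}$, is a non-trivial flop rather than an isomorphism: $D_4^{-}$ has zero intersection with the strict transform of a secant line of $\mathcal{G}(1,7)$, and the strict transform of $\sec_2(\mathcal{G}(1,7))$ is the exceptional locus of the associated small contraction $\mathcal{A}(7)_1\rightarrow W$, so crossing the wall produces a genuine flop even between abstractly isomorphic varieties. Alternatively --- and this is how the paper settles $\mathcal{A}(6)$ --- one can avoid any model comparison: the Mori chamber decomposition refines the stable base locus decomposition, Mori chambers are convex, and the region $\{\textbf{B}(D)=E_2^{-}\}$ is the non-convex union of the two triangles; hence it must be split, and since the GKZ fan of the ambient toric variety refines the Mori chamber decomposition, the split can only occur along $\left\langle D_4^{-},E_2^{-}\right\rangle$. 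Either repair fits inside your framework, but as written the ``non-isomorphic models'' criterion is both untrue in the odd case and insufficient as a characterization of Mori walls.
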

\begin{proof} 
First of all, note that by Theorem \ref{gen} the sections of $D_2^{-},D_4^{-},D_6^{-},E_1^{-},E_2^{-}$ are homogeneous generators of $\Cox(\mathcal{A}(6))$ with respect to the usual grading on $\Pic(\mathcal{A}(3))$. Furthermore, Theorems \ref{theff}, \ref{gen} and \cite[Proposition 3.3.2.3]{ADHL15} yield $\Mov(\mathcal{A}(6)) = \Nef(\mathcal{A}(6)) = \left\langle D_2^{-},D_4^{-},D_6^{-}\right\rangle$.

Now, let $\mathcal{T}_{\mathcal{A}(6)}$ be a simplicial projective toric variety as in Remark \ref{toric}. Then there is an embedding $i:\mathcal{A}(6)\rightarrow\mathcal{T}_{\mathcal{A}(6)}$ such that $i^{*}:\Pic(\mathcal{T}_{\mathcal{A}(6)})\rightarrow \Pic(\mathcal{A}(6))$ is an isomorphism inducing an isomorphism $\Eff(\mathcal{T}_{\mathcal{A}(6)})\rightarrow \Eff(\mathcal{A}(6))$. Furthermore, if we set $\widetilde{E}_j = i^{*-1}(E_j^{-}), \widetilde{D}_j = i^{*-1}(D_j^{-})$ then the sections of $\widetilde{D}_1,\widetilde{D}_2,\widetilde{D}_3,\widetilde{E}_1,\widetilde{E}_2$ are homogeneous generators of $\Cox(\mathcal{T}_{\mathcal{A}(6)})$ with respect to the grading on $\Pic(\mathcal{T}_{\mathcal{A}(6)})$ induced by the usual grading on $\Pic(\mathcal{A}(6))$ via the isomorphism $i^{*}$.

Since $\mathcal{T}_{\mathcal{A}(6)}$ is toric, the Mori chamber decomposition of $\Eff(\mathcal{T}_{\mathcal{A}(6)})$ can be computed by means of the Gelfand–Kapranov–Zelevinsky, GKZ for short, decomposition \cite[Section 2.2.2]{ADHL15}. Let us consider the family of vectors in $\Pic(\mathcal{T}_{\mathcal{A}(6)})$ given by $\mathcal{W} = (\widetilde{D}_1,\widetilde{D}_2,\widetilde{D}_3,\widetilde{E}_1,\widetilde{E}_2)$, and let $\Omega(\mathcal{W})$ be the set of all convex polyhedral cones generated by some of the vectors in $\mathcal{W}$. By \cite[Construction 2.2.2.1]{ADHL15} the GKZ chambers of $\Eff(\mathcal{T}_{\mathcal{A}(6)})$ are given by the intersections of all the cones in $\Omega(\mathcal{W})$ containing a fixed divisor $D\in\Eff(\mathcal{T}_{\mathcal{A}(6)})$. In this context a Mori chamber is the interior of a maximal cone.  

Since $\Pic(\mathcal{T}_{\mathcal{A}(6)})$ is $3$-dimensional we may picture the vectors of $\mathcal{W}$ in a $2$-dimensional section. It is straightforward to see that taking all the possible intersections of all the convex cones generated by vectors in $\mathcal{W}$ we get a picture completely analogous to the one in the statement. 

Now, Remark \ref{toric} yields that the wall-and-chamber decomposition in the statement is a possibly trivial refinement of the Mori chamber decomposition of $\Eff(\mathcal{A}(6))$. In particular, $\Mov(\mathcal{A}(6))$ coincides with $\Nef(\mathcal{A}(6))$. 

Let us analyze the stable base locus decomposition of $\Eff(\mathcal{A}(6))$. Note that $\Nef(\mathcal{A}(6)) = \left\langle D_2^{-},D_4^{-},D_6^{-}\right\rangle$ yields $\textbf{B}(D) = \emptyset$ for any $D\in \left\langle D_2^{-},D_4^{-},D_6^{-}\right\rangle$. Furthermore, note that $E_1^{-}\cup E_2^{-}$ contains $\textbf{B}(D)$ for any $D$ lying in the interior of $\left\langle D_2^{-},E_1^{-},E_2^{-}\right\rangle$ along with $(E_1^{-},E_2^{-})$. On the other hand, considering the curves described in Proposition \ref{Morimov} we see that both $E_1^{-}$ and $E_2^{-}$ are covered by curves intersecting negatively such divisor, so $\textbf{B}(D) = E_1^{-}\cup E_2^{-}$. Similarly, we can prove that $\textbf{B}(D) = E_2^{-}$ if and only if $D$ lies in the interior of the non convex union of cones $\left\langle D_2^{-},D_4^{-},E_2^{-}\right\rangle\cup \left\langle D_4^{-},D_6^{-},E_2^{-}\right\rangle$ along with $(D_6^{-},E_2]\cup (D_2^{-},E_2]\cup (D_4^{-},E_2]$. On the other hand, since Mori chambers are convex, the wall joining $D_4^{-}$ and $E_2^{-}$ must appear in the Mori chamber decomposition. For $\mathcal{A}(7)$ we can argue in an analogous way. Just note that in this case the models corresponding to the chambers $\left\langle D_2^{-},D_4^{-},E_2^{-}\right\rangle$ and  $\left\langle D_4^{-},D_6^{-},E_2^{-}\right\rangle$ are abstractly isomorphic. Indeed, they are both isomorphic to the variety $\mathcal{A}(7)_1$ in Construction \ref{css} that is to the blow-up of $\mathbb{P}^{27}$ along the Grassmannian $\mathcal{G}(1,7)$. 

In the notation of Propositions \ref{firstbu}, \ref{Sarkisov} we have the following commutative diagram
  \[
  \begin{tikzpicture}[xscale=2.5,yscale=-1.2]
    \node (A0_1) at (1, 0) {$\mathcal{A}(7)$};
    \node (A1_0) at (0, 1) {$\mathcal{A}(7)_1$};
    \node (A1_2) at (2, 1) {$\mathcal{A}(7)_{1}^{3}$};
    \node (A2_0) at (0, 2) {$\mathbb{P}^{27}$};
    \node (A2_1) at (1, 2) {$W$};
    \node (A2_2) at (2, 2) {$\mathbb{P}^{27}$};
    \path (A1_0) edge [->]node [auto] {$\scriptstyle{}$} (A2_1);
    \path (A0_1) edge [->,swap]node [auto] {$\scriptstyle{}$} (A1_0);
    \path (A1_2) edge [->]node [auto] {$\scriptstyle{}$} (A2_1);
    \path (A0_1) edge [->]node [auto] {$\scriptstyle{}$} (A1_2);
    \path (A1_0) edge [->]node [auto] {$\scriptstyle{}$} (A2_0);
    \path (A1_0) edge [->,dashed]node [auto] {$\scriptstyle{}$} (A1_2);
    \path (A1_2) edge [->]node [auto] {$\scriptstyle{}$} (A2_2);
  \end{tikzpicture}
  \]
where the rational map $\mathcal{A}(7)_1\dasharrow\mathcal{A}(7)_{1}^{3}$ is induced by the automorphism $Z^{inv}:\mathcal{A}(7)\rightarrow\mathcal{A}(7)$ in Proposition \ref{Sarkisov}. By Proposition \ref{firstbu} we have that $\mathcal{A}(7)_{1}^{3}$ is the only small $\mathbb{Q}$-factorial modification of $\mathcal{A}(7)_1$, and $\mathcal{A}(7)_1\dasharrow\mathcal{A}(7)_{1}^{3}$ is the flop associated to the small contraction induced by $D_4^{-}$. Indeed, $D_4^{-}$ has zero intersection with the strict transform of a line secant to $\mathcal{G}(1,7)$, and the strict transform of $\sec_2(\mathcal{G}(1,7))$ is the exceptional locus of the small contraction $\mathcal{A}(7)_1\rightarrow W$ induced by $D_4^{-}$. So, even though $\mathcal{A}(7)_1$ and $\mathcal{A}(7)_{1}^{3}$ are abstractly isomorphic, crossing the wall generated by $D_4^{-}$ and $E_2^{-}$ we get a non-trivial flop among them, and hence in the Mori chamber decomposition we have the additional wall $[D_4^{-},E_2^{-}]$.
\end{proof}

The arguments used in the proof of Theorem \ref{MCD_main} also apply to the space $\mathcal{A}(8)$ which has Picard rank four.

\begin{thm}\label{MCD_main_2}
The Mori chamber decomposition of $\mathcal{A}(8)$ consists of fifteen chambers and the movable cone is divided in two Mori chambers as described in the following $3$-dimensional section of $\Mov(\mathcal{A}(8))$
$$
\definecolor{uququq}{rgb}{0.25098039215686274,0.25098039215686274,0.25098039215686274}\begin{tikzpicture}[xscale=0.55,yscale=0.4][line cap=round,line join=round,>=triangle 45,x=1cm,y=1cm]\clip(-4.8,-5.3) rectangle (4.7,5.6);\fill[line width=0.4pt,fill=black,fill opacity=0.3] (-4,0) -- (4,0) -- (1.697368122518437,-1.181155055948025) -- cycle;\fill[line width=0pt,fill=black,fill opacity=0.3] (1.697368122518437,-1.181155055948025) -- (4,0) -- (1,5) -- cycle;\fill[line width=0pt,dash pattern=on 5pt off 3pt,fill=black,fill opacity=0.3] (1.697368122518437,-1.181155055948025) -- (1,5) -- (-4,0) -- cycle;\fill[line width=0pt,fill=black,fill opacity=0.1] (1,-5) -- (1.697368122518437,-1.181155055948025) -- (-4,0) -- cycle;\fill[line width=0pt,color=uququq,fill=uququq,fill opacity=0.1] (1,-5) -- (4,0) -- (1.697368122518437,-1.181155055948025) -- cycle;\draw [line width=0.4pt,dash pattern=on 5pt off 3pt] (-4,0)-- (4,0);\draw [line width=0.4pt] (4,0)-- (1.697368122518437,-1.181155055948025);\draw [line width=0.4pt] (1.697368122518437,-1.181155055948025)-- (-4,0);\draw [line width=0.4pt] (1,5)-- (1.697368122518437,-1.181155055948025);\draw [line width=0.4pt] (1.697368122518437,-1.181155055948025)-- (4,0);\draw [line width=0.4pt] (4,0)-- (1,5);\draw [line width=0.4pt] (1,5)-- (-4,0);\draw [line width=0.4pt] (-4,0)-- (1.697368122518437,-1.181155055948025);\draw [line width=0.4pt] (1.697368122518437,-1.181155055948025)-- (1,-5);\draw [line width=0.4pt] (1,-5)-- (4,0);\draw [line width=0.4pt] (1,-5)-- (-4,0);\begin{scriptsize}\draw [fill=black] (-4,0) circle (0.0pt);\draw[color=black] (-4.3,0.20186980609418226) node {$D_2^{-}$};\draw [fill=black] (4,0) circle (0.0pt);\draw[color=black] (4.45,0.20186980609418226) node {$D_6^{-}$};\draw [fill=black] (1.697368122518437,-1.181155055948025) circle (0.0pt);\draw[color=black] (2.2,-1.45) node {$D_8^{-}$};\draw [fill=black] (1,5) circle (0.0pt);\draw[color=black] (1.064516129032258,5.35) node {$D_4^{-}$};\draw [fill=black] (1,-5) circle (0.0pt);\draw[color=black] (1.65,-5.0) node {$D_M$};\end{scriptsize}\end{tikzpicture}
$$
where $D_M\sim 6H^{-}-3E_1^{-}-2E_2^{-}$. Furthermore, in the stable base locus decomposition of $\Eff(\mathcal{A}(8))$ there are two non convex stable base locus chambers which are the union of two Mori chambers, and other two non convex stable base locus chambers which are the union of three Mori chambers. The remaining three Mori chambers are also three different stable base locus chambers. 
\end{thm}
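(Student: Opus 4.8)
The plan is to run, for the Picard rank four space $\mathcal{A}(8)$, the same machine used in the proof of Theorem~\ref{MCD_main}. First I would read off from Theorem~\ref{gen} that $\Cox(\mathcal{A}(8))$ is generated by sections of classes $D_2^-,D_4^-,D_6^-,D_8^-$ and of the boundary classes $E_1^-,E_2^-,E_3^-$, and from Theorem~\ref{theff} that $\Nef(\mathcal{A}(8))=\langle D_2^-,D_4^-,D_6^-,D_8^-\rangle$ while $\Eff(\mathcal{A}(8))=\langle E_1^-,E_2^-,E_3^-,D_8^-\rangle$. Feeding these generators into \cite[Proposition~3.3.2.3]{ADHL15} produces the five extremal rays $D_2^-,D_4^-,D_6^-,D_8^-,D_M$ of $\Mov(\mathcal{A}(8))$ (their number being $2^{4-2}+1=5$ by Proposition~\ref{PropMov}), the class $D_M\sim 6H^--3E_1^--2E_2^-$ arising, as in the rank three case of $\mathcal{A}(7)$, from a product of generators rather than from a generator itself. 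A direct check shows that the wall $\langle D_2^-,D_6^-,D_8^-\rangle$ separates $\Mov(\mathcal{A}(8))$ into the two full-dimensional cones $\Nef(\mathcal{A}(8))=\langle D_2^-,D_4^-,D_6^-,D_8^-\rangle$ and $\langle D_2^-,D_6^-,D_8^-,D_M\rangle$, which is exactly the division of $\Mov(\mathcal{A}(8))$ drawn in the statement.

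Next I would embed $\mathcal{A}(8)$ into a simplicial projective toric variety $\mathcal{T}_{\mathcal{A}(8)}$ as in Remark~\ref{toric}, so that $\Cox(\mathcal{T}_{\mathcal{A}(8)})$ is the polynomial ring on the seven generators above and the Mori chamber decomposition of $\Eff(\mathcal{T}_{\mathcal{A}(8)})$ refines that of $\mathcal{A}(8)$. I would then compute the Gelfand--Kapranov--Zelevinsky decomposition of $\Eff(\mathcal{T}_{\mathcal{A}(8)})$ by forming, as in \cite[Construction~2.2.2.1]{ADHL15}, all cones of $\Omega(\mathcal{W})$ for the family $\mathcal{W}=(\widetilde{D}_2,\widetilde{D}_4,\widetilde{D}_6,\widetilde{D}_8,\widetilde{E}_1,\widetilde{E}_2,\widetilde{E}_3)$ and intersecting those that contain a fixed ample class. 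Because $\Pic(\mathcal{A}(8))$ has rank four I would organise this bookkeeping in the three-dimensional sections appearing in the statement; the outcome is a candidate wall-and-chamber structure refining the Mori chamber decomposition of $\mathcal{A}(8)$, whose top-dimensional cones we will confirm to number fifteen.

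The heart of the argument is the stable base locus computation, carried out as in Proposition~\ref{firstbu} and in the proof of Theorem~\ref{MCD_main} by testing divisors against curves. For a class $D$ in each candidate chamber I would evaluate $D\cdot C$ on the generators $C$ of $\NE(\mathcal{A}(8))$ listed in Proposition~\ref{Morimov} and on the strict transforms of the rational normal curves covering the strict transforms of the secant varieties $\sec_h(\mathcal{G}(1,8))$; whenever such a covering curve meets $D$ negatively the corresponding exceptional or secant strict transform lies in $\textbf{B}(D)$, while on $\Nef(\mathcal{A}(8))$ one has $\textbf{B}(D)=\emptyset$. Collecting these loci partitions $\Eff(\mathcal{A}(8))$ into nine stable base locus regions, of which four turn out to be non-convex: two are unions of two of the candidate chambers and two are unions of three; the remaining five regions are single chambers, namely three outside $\Mov(\mathcal{A}(8))$ together with the two filling $\Mov(\mathcal{A}(8))$.

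The main obstacle is to separate the Mori chamber decomposition from this coarser stable base locus decomposition, that is, to see that each non-convex stable base locus region genuinely subdivides into Mori chambers. Since Mori chambers are convex, a non-convex region cannot be a single chamber, so the internal walls of the two unions of two and the two unions of three must appear in the Mori chamber decomposition; this forces exactly $2\cdot 1+2\cdot 2=6$ extra walls, giving the fifteen Mori chambers against the nine stable base locus chambers. To confirm that these six walls are genuine and that the stable base locus does not change across them, I would, following the treatment of the wall $[D_4^-,E_2^-]$ for $\mathcal{A}(7)$, realise each crossing as a nontrivial flop between abstractly isomorphic models; the isomorphisms are supplied by the Sarkisov links of Proposition~\ref{Sarkisov} and the switching symmetry of the construction (\ref{closure}), read on the intermediate blow-up models $\mathcal{A}(8)_i$ of Construction~\ref{css}. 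The delicate point, and where rank four makes the analysis heavier than in Theorem~\ref{MCD_main}, is to match each flopping wall with the correct pair of isomorphic models and to verify that the flopped locus, hence the stable base locus, is unchanged across it, which is exactly what renders these walls invisible to the stable base locus decomposition.
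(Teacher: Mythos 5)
Your overall strategy coincides with the paper's: embed $\mathcal{A}(8)$ into a toric variety as in Remark \ref{toric}, compute the GKZ decomposition from the Cox generators of Theorem \ref{gen}, determine the stable base locus decomposition by testing divisors against covering curves, and then use convexity of Mori chambers to force the extra walls; the paper itself compresses all of these verifications into an appeal to the Magma library \texttt{SBLib.m} of \cite{LMR18}. Your identification of the five rays of $\Mov(\mathcal{A}(8))$, of $D_M\sim 6H^--3E_1^--2E_2^-$, and of the wall $\langle D_2^-,D_6^-,D_8^-\rangle$ splitting the movable cone into $\Nef(\mathcal{A}(8))$ and $\langle D_2^-,D_6^-,D_8^-,D_M\rangle$ all agree with the paper. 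However, the final step, where you pass from the nine stable base locus chambers to the fifteen Mori chambers, has two genuine problems.

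First, the convexity argument as you state it does not ``force exactly six extra walls.'' For a stable base locus chamber that is a non-convex union of \emph{three} GKZ chambers, non-convexity of the union only excludes that it is a single Mori chamber; since Mori chambers are unions of GKZ chambers, it could a priori split into \emph{two} Mori chambers, one of which is a convex union of two adjacent GKZ chambers. To rule this out you must check that each two-chamber sub-union inside the two triples is itself non-convex. This is true --- for instance, in the triple $\langle E_2^-,E_3^-,D_2^-,D_4^-\rangle$, $\langle E_2^-,E_3^-,D_4^-,D_6^-\rangle$, $\langle E_2^-,E_3^-,D_6^-,D_8^-\rangle$ the point $D_2^-+D_6^-$ lies in the convex hull of the first two cones but in neither of them, because $D_4^-=\tfrac12 D_2^-+\tfrac12 D_6^-+\tfrac12 E_2^-$ requires a strictly positive multiple of $E_2^-$ --- but this verification is missing from your argument and is exactly what carries the conclusion. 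Second, your proposed confirmation of the walls via flops ``between abstractly isomorphic models'' supplied by ``the switching symmetry of the construction (\ref{closure})'' cannot work for $\mathcal{A}(8)$: that symmetry is the involution $Z^{inv}$, which exists only for $n$ odd. For $n$ even, Proposition \ref{Sarkisov} produces a Sarkisov link of type I (a fibration to $\mathbb{P}^n$), not an involution, and Theorem \ref{pseudo-aut} gives $\Aut(\mathcal{A}(8))\cong PGL(9)$ with no $S_2$ factor; there is no reason for the models across the six internal walls to be abstractly isomorphic, so the analogy with the treatment of the wall $[D_4^-,E_2^-]$ of $\mathcal{A}(7)$ does not transfer. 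Fortunately this step is also unnecessary: once the pairwise non-convexity checks above are carried out, the sandwich (GKZ decomposition refines the Mori chamber decomposition, which refines the stable base locus decomposition) already pins down all fifteen chambers, which is in substance how the paper, via the toric computation, concludes.
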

\begin{proof}
The arguments, from the theoretical point of view, are the ones used in the proof of Theorem \ref{MCD_main}. Clearly, in practice it is harder to figure out how the cones intersect since we are working in a $3$-dimensional space. Also in this case the Mori chamber decomposition of $\mathcal{A}(8)$ corresponds with the GKZ decomposition of the corresponding toric variety. This last claim together with all that follows can be checked by using the Magma library \texttt{SBLib.m}, see \cite[Remark 3.19]{LMR18}.

The union of the two Mori chambers 
$$
\begin{array}{l}
\left\langle (0,0,1,0),(1,0,0,0),(4,-3,-2,-1),(2,-1,0,0)\right\rangle \\ 
\left\langle (0,0,1,0),(3,-2,-1,0),(4,-3,-2,-1),(2,-1,0,0)\right\rangle
\end{array} 
$$
gives a single stable base locus chamber, and the same holds for the two Mori chambers 
$$
\begin{array}{l}
\left\langle (0,1,0,0),(0,0,0,1),(1,0,0,0),(6,-3,-2,0)\right\rangle \\ 
\left\langle (0,1,0,0),(0,0,0,1),(4,-3,-2,-1),(6,-3,-2,0)\right\rangle
\end{array} 
$$
Furthermore, the union of the three Mori chambers 
$$
\begin{array}{l}
\left\langle (0,0,0,1),(0,0,1,0),(1,0,0,0),(2,-1,0,0)\right\rangle \\ 
\left\langle (0,0,0,1),(0,0,1,0),(3,-2,-1,0),(2,-1,0,0)\right\rangle \\
\left\langle (0,0,0,1),(0,0,1,0),(4,-3,-2,-1),(3,-2,-1,0)\right\rangle
\end{array} 
$$
gives a single stable base locus chamber, and the same holds for the three Mori chambers 
$$
\begin{array}{l}
\left\langle (0,0,0,1),(1,0,0,0),(2,-1,0,0),(3,-2,-1,0)\right\rangle \\ 
\left\langle (0,0,0,1),(1,0,0,0),(6,-3,-2,0),(3,-2,-1,0)\right\rangle \\
\left\langle (0,0,0,1),(4,-3,-2,-1),(6,-3,-2,0),(3,-2,-1,0)\right\rangle
\end{array} 
$$    
The three Mori chambers
$$
\begin{array}{l}
\left\langle (0,0,1,0),(0,1,0,0),(1,0,0,0),(4,-3,-2,-1)\right\rangle \\ 
\left\langle (0,1,0,0),(1,0,0,0),(4,-3,-2,-1),(6,-3,-2,0)\right\rangle \\
\left\langle (0,0,0,1),(0,0,1,0),(0,1,0,0),(1,0,0,0)\right\rangle
\end{array} 
$$    
are also three distinct stable base locus chambers. Finally, the remaining two Mori chambers are
$$\Nef(\mathcal{A}(8)) = \left\langle (1,0,0,0),(4,-3,-2,-1),(2,-1,0,0),(3,-2,-1,0)\right\rangle$$
and $\left\langle (1,0,0,0),(4,-3,-2,-1),(6,-3,-2,0),(3,-2,-1,0)\right\rangle$, which is indeed the other chamber of $\Mov(\mathcal{A}(8))$ as displayed in the statement.
\end{proof}

\subsection{A conjecture on the stable base locus decomposition of $\Eff(\mathcal{A}(n))$}
Note that by Theorems \ref{MCD_main}, \ref{MCD_main_2} we have that
\begin{itemize}
\item[-] $\Eff(\mathcal{A}(6))\setminus\Mov(\mathcal{A}(6))$ is subdivided in $3$ stable base locus chambers, and the stable base loci are $E_1^{-},E_2^{-},E_1^{-}\cup E_2^{-}$;
\item[-] $\Eff(\mathcal{A}(7))\setminus\Mov(\mathcal{A}(7))$ is subdivided in $6$ stable base locus chambers, and the stable base loci are $E_1^{-},E_2^{-},E_3^{-},E_1^{-}\cup E_2^{-}, E_1^{-}\cup E_3^{-}, E_2^{-}\cup E_3^{-}$;
\item[-] $\Eff(\mathcal{A}(8))\setminus\Mov(\mathcal{A}(8))$ is subdivided in $7$ stable base locus chambers, and the stable base loci are $E_1^{-},E_2^{-},E_3^{-},E_1^{-}\cup E_2^{-}, E_1^{-}\cup E_3^{-}, E_2^{-}\cup E_3^{-}, E_1^{-}\cup E_2^{-}\cup E_3^{-}$.
\end{itemize}
Guided by this observation we make the following conjecture.

\begin{conj}\label{conjcham}
Assume that $n$ is even. Then $\Eff(\mathcal{A}(n))\setminus\Mov(\mathcal{A}(n))$ has 
$$\sum_{k=1}^{\frac{n-2}{2}}\binom{\frac{n-2}{2}}{k} = 2^{\frac{n-2}{2}}-1$$
stable base locus chambers, and the stable base loci are given by 
$$E_1^{-},\dots,E_{\frac{n-2}{2}}^{-},E_1^{-}\cup E_{2}^{-},\dots, E_{1}^{-}\cup\dots\cup E_{\frac{n-2}{2}}^{-}$$
Furthermore, the analogous statement holds for the spaces of complete collineations $\mathcal{X}(n,m)$ with $n < m$ in \cite[Construction 2.4]{Ma18}.

Now, assume that $n$ is odd. Then $\Eff(\mathcal{A}(n))\setminus\Mov(\mathcal{A}(n))$ has 
$$\sum_{k=1}^{\frac{n-1}{2}-1}\binom{\frac{n-1}{2}}{k} = 2^{\frac{n-1}{2}}-2$$
stable base locus chambers, and the stable base loci are given by 
$$E_1^{-},\dots,E_{\frac{n-1}{2}}^{-},E_1^{-}\cup E_{2}^{-},\dots, E_{1}^{-}\cup\dots\cup E_{\frac{n-1}{2}-1}^{-},\dots, E_{2}^{-}\cup\dots\cup E_{\frac{n-1}{2}}^{-}$$
Furthermore, the analogous statement holds for the spaces of complete collineations $\mathcal{X}(n)$ and of complete quadrics $\mathcal{Q}(n)$ in \cite[Constructions 2.4, 2.6]{Ma18}. Note that it is enough to prove the conjecture for one of the two spaces $\mathcal{X}(n)$, $\mathcal{Q}(n)$. Indeed, by \cite[Lemma 6.2]{Ma18} the conjecture for the other one follows automatically.  
\end{conj}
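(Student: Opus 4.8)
The plan is to compute the stable base locus $\mathbf{B}(D)$ for every $D\in\Eff(\mathcal{A}(n))\setminus\Mov(\mathcal{A}(n))$ and then organize the resulting loci combinatorially, exactly as was done by hand for $n\in\{6,7,8\}$ in Theorems \ref{MCD_main} and \ref{MCD_main_2}. Since $\mathcal{A}(n)$ is a Mori dream space (Remark \ref{sphMDS}) with Cox ring generated by the Pfaffian sections $T_I^-$ and the exceptional sections $S_i^-$ (Theorem \ref{gen}), every section of a multiple $mD$ is a polynomial in these generators, and a monomial $\prod_i(S_i^-)^{a_i}\prod_I(T_I^-)^{b_I}$ of class $mD$ vanishes along $E_i^-$ precisely when $a_i>0$. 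Because the zero loci of the $T_I^-$ are strict transforms of secant varieties, which by Proposition \ref{firstbu} are resolved inside $\Mov(\mathcal{A}(n))$ (the $D_{2k+2}^-$ being semiample on $\mathcal{A}(n)$ by Theorem \ref{theff}), the only divisorial components that can occur in $\mathbf{B}(D)$ for $D\notin\Mov(\mathcal{A}(n))$ are among the $E_i^-$. Thus $\mathbf{B}(D)=\bigcup_{i\in J(D)}E_i^-$, where $J(D)$ is the set of indices $i$ such that every monomial of every class $mD$ is divisible by $S_i^-$.

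The argument then splits into the two usual inclusions. For $\bigcup_{i\in J(D)}E_i^-\subseteq\mathbf{B}(D)$ I would, for each such $i$, produce a covering family of curves on $E_i^-$ meeting $D$ negatively, assembled from the extremal classes of $\NE(\mathcal{A}(n))$ listed in Proposition \ref{Morimov}; this generalizes the computation $D\cdot\widetilde C=h-1+bh<0$ from the proof of Proposition \ref{firstbu}, and forces $E_i^-\subseteq\mathbf{B}(D)$ whenever the $E_i^-$-coefficient of $D$ is sufficiently negative relative to its movable part. For the reverse inclusion I would use the monomial description above: writing $mD$ as a nonnegative combination of the classes $E_j^-$ and $D_{2k+2}^-\sim(k+1)H^--\sum_{h\le k}(k-h+1)E_h^-$ of Theorem \ref{theff}, the $H^-$-degree of $D$ is carried entirely by the $T_I^-$-factors, so $J(D)$ is governed by a purely lattice-theoretic feasibility condition on the $E_i^-$-coefficients. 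Equivalently one reads $J(D)$ off the GKZ/toric model of Remark \ref{toric}, which is how the $n\le 8$ cases were verified.

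The combinatorial heart is then to show that, as $D$ ranges over $\Eff(\mathcal{A}(n))\setminus\Mov(\mathcal{A}(n))$, the sets $J(D)$ realize exactly the prescribed families of index-subsets, with the stated binomial counts. Here the even/odd dichotomy enters geometrically: when $n$ is odd the last blow-up of Construction \ref{css} is an isomorphism, so $E_{(n-1)/2}^-$ is the strict transform of the secant \emph{hypersurface} $\sec_{(n-1)/2}(\mathcal{G}(1,n))$, that is, a Pfaffian divisor lying on an extremal ray of $\Eff(\mathcal{A}(n))$ (Theorem \ref{theff}); its covering curves force it into $\mathbf{B}(D)$ under a different inequality than the genuinely exceptional $E_i^-$, and this is precisely what removes the single missing chamber, the full union $E_1^-\cup\dots\cup E_{(n-1)/2}^-$, yielding $2^{(n-1)/2}-2$ in place of $2^{(n-1)/2}-1$. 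I expect a clean uniform proof to proceed by induction on $n$ through the recursive structure of the boundary divisors of the wonderful variety $\mathcal{A}(n)$, each $E_i^-$ fibering over a space built from smaller spaces of complete skew-forms.

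The main obstacle, and presumably the reason the statement is left as a conjecture, is exactly this uniformity: the feasibility analysis determining $J(D)$, together with the covering-curve inequalities, was checked only case by case (and computationally, via the toric model, for $n\le 8$). Converting it into an $n$-independent argument requires controlling the nonnegative-coefficient expansions of an arbitrary $D\in\Eff(\mathcal{A}(n))$ in the generators simultaneously across all levels of the secant tower, and matching the resulting subset pattern with the boundary combinatorics of the compactification; once that is in place, the reduction to the collineation and quadric cases follows formally via \cite[Lemma 6.2]{Ma18}, as indicated in the statement.
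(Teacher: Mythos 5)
This statement is Conjecture \ref{conjcham}: the paper itself offers no proof of it, only the supporting evidence of the explicit computations for $n\in\{6,7,8\}$ in Theorems \ref{MCD_main} and \ref{MCD_main_2}, carried out via the toric embedding of Remark \ref{toric}, the GKZ decomposition, and (for $n=8$) a machine check. Your proposal reproduces exactly that circle of ideas---Cox generators from Theorem \ref{gen}, monomial vanishing along the $E_i^-$, covering curves built from Proposition \ref{Morimov}, the toric/GKZ model---and then, as you yourself concede in your final paragraph, leaves open precisely the step that would turn the low-rank verifications into a theorem: a uniform, $n$-independent determination of the sets $J(D)$ together with the matching covering-curve inequalities. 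So what you have written is a strategy outline, not a proof, and it cannot be accepted as one; it is, in effect, a restatement of why the paper leaves this as a conjecture.

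Beyond the admitted missing uniformity, two gaps in the outline itself should be flagged. First, the reduction $\mathbf{B}(D)=\bigcup_{i\in J(D)}E_i^-$ controls only the divisorial part of the stable base locus: the monomial argument identifies which exceptional sections divide every section of $mD$, but it does not exclude additional components of codimension at least two (strict transforms of secant varieties, or intersections of the $E_i^-$), and the conjecture asserts an equality of loci, not of divisorial parts. Such higher-codimension stable base loci genuinely occur in this family of spaces---Proposition \ref{firstbu} exhibits $\widetilde{\sec_h(\mathcal{G}(1,n))}$ as a stable base locus on $\mathcal{A}(n)_1$---so their absence for $D\in\Eff(\mathcal{A}(n))\setminus\Mov(\mathcal{A}(n))$ must be proved, not assumed. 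Second, the covering-curve step is not routine: Proposition \ref{Morimov} lists extremal classes of $\NE(\mathcal{A}(n))$, but one still has to show, for every $i$ and every $D$ in each conjectured chamber, that curves of a suitable class cover $E_i^-$ and meet $D$ negatively, and conversely that no such family exists outside those chambers; this is exactly what the paper does ad hoc for small $n$. Finally, be careful with your closing remark: \cite[Lemma 6.2]{Ma18} transfers the statement only between $\mathcal{X}(n)$ and $\mathcal{Q}(n)$; it does not reduce the collineation or quadric analogues to the skew-form case, nor does it handle $\mathcal{X}(n,m)$ with $n<m$, so each of those would require its own argument.
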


\subsection{Pseudo-automorphisms}\label{psaut}
Finally, a computation of the pseudo-automorphisms of $\mathcal{A}(n)$ is at hand. Recall that a pseudo-automorphism of a projective variety $X$ is a birational map $f:X\dasharrow X$ such that both $f$ and $f^{-1}$ do not contract any divisor. We will denote by $\PsAut(X)$ the group of pseudo-automorphisms of $X$.

\begin{thm}\label{pseudo-aut}
If $n\geq 4$ for $\mathcal{A}(n)$ we have
$$
\PsAut(\mathcal{A}(n)) \cong \Aut(\mathcal{A}(n)) \cong
\left\lbrace\begin{array}{ll}
PGL(n+1) & \text{if n is even}\\ 
PGL(n+1)\rtimes S_2 & \text{if n is odd}
\end{array}\right.
$$ 
while $\PsAut(\mathcal{A}(1)) \cong \Aut(\mathcal{A}(1))$ is trivial, $\PsAut(\mathcal{A}(2)) \cong \Aut(\mathcal{A}(2))\cong PGL(3)$, and finally $\PsAut(\mathcal{A}(3)) \cong \Aut(\mathcal{A}(3))\cong PGL(6)$.  
\end{thm}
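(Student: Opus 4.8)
The plan is to first reduce the computation of $\PsAut$ to that of $\Aut$, and then to pin down $\Aut(\mathcal{A}(n))$ through its action on $\Nef(\mathcal{A}(n))$. For the reduction I would use that $\mathcal{A}(n)$ is Fano (Corollary \ref{Fano}). A pseudo-automorphism $f\colon\mathcal{A}(n)\dasharrow\mathcal{A}(n)$ is an isomorphism in codimension one, so $f^{*}$ is a linear automorphism of $\Pic(\mathcal{A}(n))$ that fixes $K_{\mathcal{A}(n)}$ and preserves $\Eff$, $\Mov$, and the Mori chamber decomposition of $\Mov(\mathcal{A}(n))$. Since $-K_{\mathcal{A}(n)}$ is ample it lies in the interior of the chamber $\Nef(\mathcal{A}(n))$; as $f^{*}$ fixes $-K_{\mathcal{A}(n)}$ and permutes the chambers, it must fix the nef chamber, so $f^{*}\Nef(\mathcal{A}(n))=\Nef(\mathcal{A}(n))$ and $f$ is an automorphism. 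This yields $\PsAut(\mathcal{A}(n))\cong\Aut(\mathcal{A}(n))$ for every $n$, and in particular settles the low Picard rank cases: $\mathcal{A}(1)$ is a point, while Corollary \ref{Fano} gives $\mathcal{A}(2)\cong\mathbb{P}^{2}$ and $\mathcal{A}(3)\cong\mathbb{P}^{5}$, whose automorphism groups are $PGL(3)$ and $PGL(6)$.

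For the lower bound when $n\geq 4$, the $SL(n+1)$-action of Remark \ref{l1} descends to a faithful action of $PGL(n+1)$, giving an inclusion $PGL(n+1)\hookrightarrow\Aut(\mathcal{A}(n))$; note that every such element acts linearly on $\mathbb{P}^{N_{-}}=\mathbb{P}(\bigwedge^{2}V)$ and hence fixes $H^{-}$. When $n$ is odd the involution $Z^{inv}\colon\mathcal{A}(n)\to\mathcal{A}(n)$ of Proposition \ref{Sarkisov}, lifting $Z\mapsto Z^{-1}$, provides an extra automorphism. Writing $\rho(A)\colon Z\mapsto AZA^{t}$, a direct computation gives $Z^{inv}\circ\rho(A)\circ Z^{inv}=\rho\big((A^{t})^{-1}\big)$, so conjugation by $Z^{inv}$ induces the contragredient (inverse–transpose) automorphism of $PGL(n+1)$; since $(Z^{inv})^{2}=\mathrm{id}$ and $Z^{inv}$ does not fix $H^{-}$ (it interchanges the contractions to $\mathbb{P}(\bigwedge^{2}V)$ and to $\mathbb{P}(\bigwedge^{n-1}V)$), we obtain $PGL(n+1)\rtimes S_{2}\hookrightarrow\Aut(\mathcal{A}(n))$.

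For the upper bound I would let $\phi\in\Aut(\mathcal{A}(n))$ and study $\phi^{*}$ on $\Nef(\mathcal{A}(n))=\langle D_{2j+2}^{-}\rangle$. Since $\phi^{*}$ preserves bigness and the type of the associated extremal contraction, it permutes the generators $D_{2j+2}^{-}$ respecting these invariants. The ray $H^{-}=D_{2}^{-}$ is singled out as the one giving a birational divisorial contraction onto the Picard rank one variety $\mathbb{P}^{N_{-}}$; for $n$ even this is the unique such ray, while for $n$ odd it shares this property only with its mirror $D_{n-1}^{-}=(Z^{inv})^{*}H^{-}$, the two being interchanged by $Z^{inv}$. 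Hence, after composing $\phi$ with $Z^{inv}$ when $n$ is odd, I may assume $\phi^{*}H^{-}=H^{-}$, so that $\phi$ commutes with $f^{-}\colon\mathcal{A}(n)\to\mathbb{P}^{N_{-}}$ and descends to $\overline{\phi}\in\Aut(\mathbb{P}^{N_{-}})=PGL(N_{-}+1)$ preserving the first blown-up center $\mathcal{G}(1,n)\subset\mathbb{P}(\bigwedge^{2}V)$. By the classical fact that $\Aut(\mathcal{G}(2,n+1))=PGL(n+1)$ for $n\geq 4$ (the self-duality occurring only for $n=3$), together with projective normality of $\mathcal{G}(1,n)$, the map $\overline{\phi}$ is induced by an element of $PGL(n+1)$; as $f^{-}$ is birational, $\phi$ is its unique lift and therefore lies in $PGL(n+1)$, giving the reverse inclusion.

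The main obstacle is the intrinsic characterisation of $H^{-}$ among the generators of $\Nef(\mathcal{A}(n))$ used in the last step: one must verify that the extremal contraction attached to $H^{-}$ is distinguished from those attached to the other $D_{2j+2}^{-}$ (for instance as the only divisorial contraction onto a smooth Picard rank one target, or by comparing the volumes $(D_{2j+2}^{-})^{\dim\mathcal{A}(n)}$), and that for $n$ odd the only competing ray is $D_{n-1}^{-}=(Z^{inv})^{*}H^{-}$. Controlling these contraction types, together with the input from the classification of Grassmannian automorphisms, is where the real work lies; the remaining steps are formal consequences of the Fano property and of the cone computations in Theorems \ref{theff} and \ref{gen}.
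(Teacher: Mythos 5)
Your strategy coincides with the paper's: reduce $\PsAut$ to $\Aut$ via the Fano property (Corollary \ref{Fano}), let an automorphism act on the extremal rays of $\Nef(\mathcal{A}(n))$, show that after possibly composing with $Z^{inv}$ (Proposition \ref{Sarkisov}) it fixes $H^{-}=D_2^{-}$, descend through $f^{-}$ to an automorphism of $\mathbb{P}^{N_{-}}$ stabilizing $\mathcal{G}(1,n)$, and conclude by the classification of such automorphisms as $PGL(n+1)$ for $n\geq 4$; the low Picard rank cases and the non-split-into-direct-product structure of the semidirect product are handled identically. The one place you diverge is exactly the step you yourself call ``the main obstacle'': proving that the induced permutation of the nef generators is trivial for $n$ even and is at worst the mirror involution $D_{2j+2}^{-}\leftrightarrow D_{n-2j-1}^{-}$ for $n$ odd. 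You leave this unproved, and since it is the heart of the upper bound, the proposal as written has a genuine gap there.

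The gap is not merely cosmetic, because your suggested criterion is insufficient as stated. You propose to single out $H^{-}$ as ``the only ray whose contraction is a birational divisorial contraction onto a Picard rank one target,'' but this fails without further analysis: for instance on $\mathcal{A}(6)$ the contraction attached to $D_4^{-}\sim 2H^{-}-E_1^{-}$ is also birational and divisorial onto a Picard rank one image, since $D_4^{-}\cdot e_2^{-}=0$ (with the curve classes of Proposition \ref{Morimov}), so the fibers of $E_2^{-}$ over the strict transform of $\sec_2(\mathcal{G}(1,6))$ are contracted and the divisor $E_2^{-}$ drops dimension. Distinguishing $H^{-}$ would then require proving non-smoothness of the intermediate targets, or computing top self-intersections, none of which you carry out. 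The paper closes this step with a much softer invariant: an automorphism preserves the dimension of the space of global sections of each nef generator, and by Theorems \ref{theff} and \ref{gen} the complete linear system of $D_{2j+2}^{-}$ is spanned by the $(2j+2)\times(2j+2)$ sub-Pfaffians, so $h^0$ is governed by $\binom{n+1}{2j+2}$. Since $\binom{n+1}{a}=\binom{n+1}{b}$ with $a,b$ even forces $a=b$ when $n+1$ is odd, while for $n+1$ even it forces $a=b$ or $a+b=n+1$, the permutation is trivial for $n$ even, and for $n$ odd it is at most the mirror switch, which is realized by $Z^{inv}$. Substituting this dimension count for your contraction-type analysis makes your argument complete, and essentially identical to the paper's.
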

\begin{proof}
Since by Corollary \ref{Fano} $\mathcal{A}(n)$ is Fano we have $\PsAut(\mathcal{A}(n))=\Aut(\mathcal{A}(n))$. Now, let $\phi\in\Aut(\mathcal{A}(n))$ be an automorphism. Then $\phi$ must act on the extremal rays of $\Nef(\mathcal{A}(n))$. If $n$ is even then Theorem \ref{theff} yields that this action must be trivial since for instance different generators of $\Nef(\mathcal{A}(n))$ have spaces of global sections of different dimensions.  

On the other hand, if $n$ is odd then either this action is trivial or it switches $D_{2j+2}^{-}$ with $D_{n-2j-1}^{-}$ for $i = 1,\dots,\frac{n-3}{2}$. We know that the latter is indeed realized by the distinguished automorphism $Z^{inv}:\mathcal{A}(n)\rightarrow\mathcal{A}(n)$ in Proposition \ref{Sarkisov}. 

Then, if $n$ is even for any automorphism $\phi$ we have in particular that $\phi^{*}D_2^{-} = D_2^{-}$, and hence via the blow-up map $f:\mathcal{A}(n)\rightarrow\mathbb{P}^{N_{-}}$ in Construction \ref{css} $\phi$ induces an automorphism $\overline{\phi}$ of $\mathbb{P}^{N_{-}}$ stabilizing the Grassmannian $\mathcal{G}(1,n)\subseteq\mathbb{P}^{N_{-}}$. To conclude it is enough to observe that since $\mathcal{A}(n)$ and $\mathbb{P}^{N_{-}}$ are birational we have $\phi = Id_{\mathcal{A}(n)}$ if and only if $\overline{\phi} = Id_{\mathbb{P}^{N_{-}}}$, and that by \cite[Theorem 1.1]{Co89} for $n\geq 4$ the group of automorphisms of $\mathbb{P}^{N_{-}}$ stabilizing $\mathcal{G}(1,n)$ is isomorphic to $PGL(n+1)$.

If $n$ is odd we have a surjective morphism $\Aut(\mathcal{A}(n))\rightarrow S_2$ where $S_2 = \{Id_{\mathcal{A}(n)}, Z^{inv}\}$. Assume that the permutation induced by $\phi\in\Aut(\mathcal{A}(n))$ is trivial. Then as before $\phi$ induces an automorphism of $\mathbb{P}^{N_{-}}$ preserving $\mathcal{G}(1,n)\subseteq\mathbb{P}^{N_{-}}$. In this case \cite[Theorem 1.1]{Co89} yields the exact sequence 
$$0\rightarrow PGL(n+1)\rightarrow\Aut(\mathcal{A}(n))\rightarrow S_2\rightarrow 0$$
where the last morphism has a section. So the sequence splits, and since the actions of $PGL(n+1)$ and $S_2 = \{Id_{\mathcal{X}(n)},Z^{inv}\}$ on $\mathcal{A}(n)$ do not commute the semi-direct product $\Aut(\mathcal{A}(n))\cong S_2 \ltimes PGL(n+1)$ is not direct. For the special cases note that $\mathcal{A}(1)$ is a point and Construction \ref{css} yields $\mathcal{A}(2)\cong\mathbb{P}^2$ and $\mathcal{A}(3)\cong\mathbb{P}^5$.
\end{proof}

\begin{Remark}
By Theorem \ref{pseudo-aut} we know that the connected component of the identity $\Aut^{o}(X)$ of $\Aut(X)$ where $X$ is any small $\mathbb{Q}$-factorial modification of $\mathcal{A}(n)$ is isomorphic to $PGL(n+1)$. Indeed, by the general theory of Mori dream spaces, see for instance Definition \ref{def:MDS}, such small modification $X$ is a Mori dream space and hence \cite[Corollary 4.2.4.2]{ADHL15} yields $\Aut^{o}(X)\cong \Aut^{o}(\mathcal{A}(n))$.
\end{Remark}

\bibliographystyle{amsalpha}
\bibliography{Biblio}

\end{document}